\title{\textbf{Counting edges of different types\\in a local graph of a Grassmann graph}}
\author[]{Ian Seong}
\date{}
\def\Stab{{\rm{Stab}}}
\newtheoremstyle{dotless}{}{}{\itshape}{}{\bfseries}{}{ }{}
\theoremstyle{dotless}
\newtheorem{theorem}{Theorem}[section]
\newtheorem{lemma}[theorem]{Lemma}
\newtheoremstyle{dotlessdef}{}{}{}{}{\bfseries}{}{ }{}
\theoremstyle{dotlessdef}
\newtheorem{definition}[theorem]{Definition}
\newtheorem{remark}[theorem]{Remark}
\begin{document}
\maketitle
\begin{abstract}
    Let $\mathbb{F}_q$ denote a finite field with $q$ elements. Let $n,k$ denote integers with $n>2k\geq 6$. Let $V$ denote a vector space over $\mathbb{F}_{q}$ that has dimension $n$. The vertex set of the Grassmann graph $J_q(n,k)$ consists of the $k$-dimensional subspaces of $V$. Two vertices of $J_q(n,k)$ are adjacent whenever their intersection has dimension $k-1$. Let $\partial$ denote the path-length distance function of $J_q(n,k)$. Pick a vertex $y$. In this paper we define three types of edges in $X$, namely type $0$, type $+$, and type $-$; for adjacent vertices $w,z$ such that $\partial(w,y)=\partial(z,y)$, the type of the edge $wz$ depends on the subspaces $w+z,w,z,w\cap z$ and their intersections with $y$. Pick a vertex $x$ such that $1<\partial(x,y)<k$. Let $\Gamma(x)$ denote the local graph of $x$ in $J_q(n,k)$.  Our general goal is to count the number of edges in $\Gamma(x)$ for each type. Consider a two-vertex stabilizer $\Stab(x,y)$ in $GL(V)$; it is known that the $\Stab(x,y)$-action on $\Gamma(x)$ has five orbits. Pick two orbits $\mathcal{O},\mathcal{N}$ that are not necessarily distinct; for a given $w\in \mathcal{O}$, we find the number of vertices in $z\in \mathcal{N}$ such that the edge $wz$ has (i) type $0$, (ii) type $+$, (iii) type $-$. To find these numbers, we use many results that involve a projective geometry $P_q(n)$, which is the set of all subspaces of $V$.
    \\ \\
     \textbf{Keywords.} Distance-regular graph; projective geometry; Grassmann graph; two-vertex stabilizer.\\
    \textbf{2020 Mathematics Subject Classification.} Primary: 05E30. Secondary: 05E18.
\end{abstract}

\section{Introduction}
This paper is about a class of distance-regular graphs \cite{BBIT, BCN} called the Grassmann graphs. These graphs are defined from projective geometries, which we briefly describe. Let $\mathbb{F}_q$ denote a finite field with $q$ elements. Let $n\geq 1$. Let $V$ denote a vector space of dimension $n$ over $\mathbb{F}_q$. Let the set $P=P_q(n)$ consist of the subspaces of $V$. The set $P$, together with inclusion partial order, is a poset called a projective geometry. For $n>k\geq 1$, the vertex set $X$ of the Grassmann graph $J_q(n,k)$ consists of the $k$-dimensional subspaces of $V$. Two vertices $x,y\in X$ are adjacent whenever the subspace $x\cap y$ has dimension $k-1$. The graph $J_q(n,k)$ is known to be distance-regular \cite[Section~9.3]{BCN}. The structure of $J_q(n,k)$ has been widely studied, and there are many open questions related to this graph \cite{GK2025JCT,Lee2020LAA,LIW2020LAA,Metsch}.

Let $\partial$ denote the path-length distance function of $J_q(n,k)$. Pick $y\in X$. For each pair of adjacent vertices $w,z\in X$ such that $\partial(w,y)=\partial(z,y)$, we assign the edge $wz$ with one of three types, namely type $0$, type $+$, and type $-$; as we will see, the type of $wz$ depends on the subspaces $w+z,w,z,w\cap z$ and their intersections with $y$. Pick $x\in X$ such that $1<\partial(x,y)<k$. Let $\Gamma(x)$ denote the set of vertices that are adjacent to $x$. The general goal of this paper is to count the number of edges in $\Gamma(x)$ that have (i) type $0$, (ii) type $+$, (iii) type $-$.

To set the stage, we present some results from \cite{Seong2, Seong3}. Let $u,v\in P$ such that $u\subseteq v$ and $\dim v=\dim u+1$. Write
\begin{equation*}
    u\in P_{i,j}, \qquad \qquad v\in P_{r,s}.
\end{equation*}
By \cite[Lemma~2.3]{Seong3}, either (i) $r=i+1$ and $s=j$, or (ii) $r=i$ and $s=j+1$. We say that $v$ $\slash$-covers $u$ whenever (i) holds, and $v$ $\backslash$-covers $u$ whenever (ii) holds. We now focus on the set $X$. For each pair of adjacent vertices $w,z\in X$ such that $\partial(w,y)=\partial(z,y)$, we say that the edge $wz$ has
\begin{enumerate}[label=(\roman*)]
    \item type $0$ whenever the subspace $w+z$ $/$-covers each of $w$ and $z$, which $\backslash$-covers $w\cap z$;
    \item type $+$ whenever the subspace $w+z$ $\backslash$-covers each of $w$ and $z$;
    \item type $-$ whenever each of the subspaces $w$ and $z$ $/$-covers $w\cap z$.
\end{enumerate}
Let $\Stab(x,y)$ denote the subgroup of $GL(V)$ consisting of the elements that fix both $x$ and $y$. By \cite[Theorem~6.15]{Seong2}, the action of $\Stab(x,y)$ on $\Gamma(x)$ has five orbits:
\begin{align*}
    \mathcal{B}_{xy}&=\{z\in \Gamma(x)\mid \partial(z,y)=\partial(x,y)+1\};\\
    \mathcal{C}_{xy}&=\{z\in \Gamma(x)\mid \partial(z,y)=\partial(x,y)-1\};\\
    \mathcal{A}^{0}_{xy}&=\{z\in \Gamma(x)\mid \text{$\partial(z,y)=\partial(x,y)$ and the edge $zx$ has type $0$}\};\\
    \mathcal{A}^{+}_{xy}&=\{z\in \Gamma(x)\mid \text{$\partial(z,y)=\partial(x,y)$ and the edge $zx$ has type $+$}\};\\
    \mathcal{A}^{-}_{xy}&=\{z\in \Gamma(x)\mid \text{$\partial(z,y)=\partial(x,y)$ and the edge $zx$ has type $-$}\}.
\end{align*}
Furthermore, these orbits form an equitable partition of $\Gamma(x)$. See \cite[Theorem~9.1]{Seong2} for the corresponding structure constants.

We now summarize the main results of this paper. Define diagonal matrices $K_1, K_2 \in \text{Mat}_{P}(\mathbb{C})$ as follows. For $u \in P$ the $(u,u)$-entries are
\begin{equation*}
    \bigl(K_1\bigl)_{u,u}=q^{\frac{k}{2}-i}, \qquad \qquad \bigl(K_2\bigl)_{u,u}=q^{j-\frac{n-k}{2}},
\end{equation*}
where $u\in P_{i,j}$. The matrices $K_1,K_2$ are invertible. Define matrices $L_1,L_2,R_1,R_2\in \text{Mat}_{P}(\mathbb{C})$ as follows. For $u,v\in P$ their $(u,v)$-entries are
\begin{align*}
    &\bigl(L_1\bigr)_{u,v}=\begin{cases}
        1&\text{if }v\text{ $\slash$-covers }u,\\
        0&\text{if }v\text{ does not $\slash$-cover }u,
    \end{cases}
    &\bigl(L_2\bigr)_{u,v}=\begin{cases}
        1&\text{if }v\text{ $\backslash$-covers }u,\\
        0&\text{if }v\text{ does not $\backslash$-cover }u,
    \end{cases}\\
    &\bigl(R_1\bigr)_{u,v}=\begin{cases}
        1&\text{if }u\text{ $\slash$-covers }v,\\
        0&\text{if }u\text{ does not $\slash$-cover }v,
    \end{cases} 
    &\bigl(R_2\bigr)_{u,v}=\begin{cases}
        1&\text{if }u\text{ $\backslash$-covers }v,\\
        0&\text{if }u\text{ does not $\backslash$-cover }v.
    \end{cases}
\end{align*}

Let $\mathcal{H}$ denote the subalgebra of $\text{Mat}_{P}(\mathbb{C})$ generated by $L_1, L_2, R_1, R_2, K_1^{\pm 1}, K_2^{\pm 1}$. We now define five matrices in $\mathcal{H}$ that are related to the five orbits of $\Stab(x,y)$. Define 
\begin{equation*}
    R=L_1R_2,\qquad \qquad \qquad L=L_2R_1.
\end{equation*}
Define matrices $F^{0},F^{+},F^{-}\in \text{Mat}_{P}(\mathbb{C})$ as follows. For $u,v\in P$ their $(u,v)$-entries are
    \begin{align*}
        \bigl(F^{0}\bigr)_{u,v}&=\begin{cases}
            1&\text{if $u+v$ $\slash$-covers each of $u$ and $v$, which $\backslash$-covers $u\cap v$},\\
            0&\text{otherwise},
        \end{cases}\\
        \bigl(F^{+}\bigr)_{u,v}&=\begin{cases}
            1&\text{if $u+v$ $\backslash$-covers each of $u$ and $v$},\\
            0&\text{otherwise},
        \end{cases}\\
        \bigl(F^{-}\bigr)_{u,v}&=\begin{cases}
            1&\text{if each of $u$ and $v$ $\slash$-covers $u\cap v$},\\
            0&\text{otherwise}.
        \end{cases}
    \end{align*}
    Observe that for $w,x\in X$ the $(w,x)$-entries of $R,L,F^{0},F^{+},F^{-}$ are    \begin{align*}
        &R_{w,z}=\begin{cases}
            1&\text{if $\partial(w,y)=\partial(z,y)+1$,}\\
            0&\text{otherwise,}
        \end{cases} &\bigl(F^{0}\bigr)_{w,z}=\begin{cases}
            1&\text{if the edge $wz$ has type $0$,}\\
            0&\text{otherwise,}
        \end{cases}\\
        &L_{w,z}=\begin{cases}
            1&\text{if $\partial(w,y)=\partial(z,y)-1$,}\\
            0&\text{otherwise,}
        \end{cases}
        &\bigl(F^{+}\bigr)_{w,z}=\begin{cases}
            1&\text{if the edge $wz$ has type $+$,}\\
            0&\text{otherwise,}
        \end{cases}\\
        & &\bigl(F^{-}\bigr)_{w,z}=\begin{cases}
            1&\text{if the edge $wz$ has type $-$,}\\
            0&\text{otherwise.}
        \end{cases}
    \end{align*}

    We show that the following (\ref{intro1})--(\ref{intro12}) hold:
        \begin{equation}
        \label{intro1}
            q^2 RF^0-F^0R+\Bigl(q^{\frac{k}{2}}K_1+q^{\frac{n-k}{2}}K_2-(q+1)I\Bigr)R=0;
        \end{equation}
        \begin{equation}
            \label{intro2}
            qRF^+-F^+R-F^0R+(q-1)^{-1}\Bigl(q^{\frac{n}{2}+1}K_1K_2^{-1}-q^{\frac{k}{2}}K_1-q^{\frac{n-k}{2}}K_2+I\Bigr)R=0;
        \end{equation}       
        \begin{equation}
        \label{intro3}
            qRF^- -F^-R-F^0R+(q-1)^{-1}\Bigl(q^{\frac{n}{2}+1}K_1^{-1}K_2-q^{\frac{k}{2}}K_1-q^{\frac{n-k}{2}}K_2+I\Bigr)R=0;
        \end{equation}
        \begin{equation}
        \label{intro4}
            q^2F^0L-LF^0+\Bigl(q^{\frac{k}{2}+1}K_1+q^{\frac{n-k}{2}+1}K_2-(q+1)I\Bigr)L=0;
        \end{equation}
        \begin{equation}
            qF^{+}L-LF^{+}-LF^0+(q-1)^{-1}\Bigl(q^{\frac{n}{2}+1}K_1K_2^{-1}-q^{\frac{k}{2}+1}K_1-q^{\frac{n-k}{2}+1}K_2+I\Bigr)L=0;
        \end{equation}
        \begin{equation}
            qF^{-}L-LF^{-}-LF^0+(q-1)^{-1}\Bigl(q^{\frac{n}{2}+1}K_1^{-1}K_2-q^{\frac{k}{2}+1}K_1-q^{\frac{n-k}{2}+1}K_2+I\Bigr)L=0;
        \end{equation}
        \begin{equation}
        \label{intro11}
            \begin{aligned}
            0=&\;LR-qF^+F^- -(q-1)^{-1}\Bigl(\bigl(q^{\frac{n}{2}+1}K_1^{-1}K_2-q^{\frac{n-k}{2}+1}K_2\bigr)F^{+}+\bigl(q^{\frac{n}{2}+1}K_1K_2^{-1}-q^{\frac{k}{2}+1}K_1\bigr)F^{-}\Bigr)\\
            &\;\qquad \qquad \qquad -(q-1)^{-2}\Bigl(q^{\frac{n}{2}+1}K_1K_2-q^{n-\frac{k}{2}+1}K_1-q^{\frac{n+k}{2}+1}K_2+q^{n+1}I\Bigr);
        \end{aligned}
        \end{equation}
        \begin{equation}
        \label{intro12}
        \begin{aligned}
            0=&\;qRL-\bigl(F^{0}\bigr)^2-F^{0}F^{+}-F^{0}F^{-}-F^{+}F^{-}\\
            &\;\qquad -(q-1)^{-1}\Bigl(\bigl(q^{\frac{k}{2}}K_1+q^{\frac{n-k}{2}}K_2-2I\bigr)F^{0}+\bigl(q^{\frac{k}{2}}K_1-I\bigr)F^{+}+\bigl(q^{\frac{n-k}{2}}K_2-I\bigr)F^{-}\Bigr)\\
            &\;\qquad \qquad-(q-1)^{-2}\bigl(q^{\frac{n}{2}}K_1K_2-q^{\frac{k}{2}}K_1-q^{\frac{n-k}{2}}K_2+I\bigr).
        \end{aligned}
    \end{equation}

    Define $\mathcal{A}_{xy}=\mathcal{A}^{0}_{xy}\cup \mathcal{A}^{+}_{xy}\cup \mathcal{A}^{-}_{xy}$. For $w\in \mathcal{A}_{xy}$, we find the $(w,x)$-entries of many matrices that involve $F^{0}, F^{+}, F^{-}$. In what follows, we use the notation
\begin{equation*}
    [m]=\frac{q^{m}-1}{q-1}\qquad \qquad \qquad (m\in \mathbb{Z}).
\end{equation*}
Referring to the table below, for each matrix $M$ in the header column and each orbit $\mathcal{O}$ in the header row, the $(M,\mathcal{O})$-entry gives the $(w,x)$-entry of $M$ when $w\in \mathcal{O}$. Write $i=\partial(x,y)$.

    \begin{center}
            \begin{tabular}{c|c c c}
                & $\mathcal{A}^{0}_{xy}$ & $\mathcal{A}^{+}_{xy}$ & $\mathcal{A}^{-}_{xy}$ \\
                \hline \\
                $\bigl(F^{0}\bigr)^2$ & $2q^i-q-2$ & $0$ & $0$ \\ \\
                $F^0F^+$ & $0$ & $(q-1)[i]$ & $0$ \\ \\
                $F^0F^-$ & $0$ & $0$ & $(q-1)[i]$ \\ \\
                $F^+F^0$ & $0$ & $(q-1)[i]$ & $0$ \\ \\
                $\bigl(F^+\bigr)^2$ & $q^{i+1}[n-k-i]$ & $q[n-k]-q^i-q$ & $0$ \\ \\
                $F^+F^-$ & $0$ & $0$ & $0$\\ \\
                $F^-F^0$ & $0$ & $0$ & $(q-1)[i]$ \\ \\
                $F^-F^+$ & $0$ & $0$ & $0$ \\ \\
                $\bigl(F^-\bigr)^2$ & $q^{i+1}[k-i]$ & $0$ & $q[k]-q^i-q$
            \end{tabular}
        \end{center}
        
    Referring to the table below, for each orbit $\mathcal{O}$ in the header column, and each orbit $\mathcal{N}$ in the header row, the $(\mathcal{O},\mathcal{N})$-entry gives a $3\times 1$-matrix that satisfies the following: each entry of the matrix corresponds to the number of vertices in $\mathcal{N}$ that share an edge of type $0$, $+$, $-$ respectively with a given vertex in $\mathcal{O}$. We omit the entries $(0,0,0)^{\top}$, where $\top$ is the matrix-transpose. Write $i=\partial(x,y)$. 
    \begin{center}
\begin{tabular}{c|c c c c c}
     & $\mathcal{B}_{xy}$ & $\mathcal{C}_{xy}$ & $\mathcal{A}_{xy}^{0}$ & $\mathcal{A}_{xy}^{+}$ & $\mathcal{A}_{xy}^{-}$\\
    \hline \\
    $\mathcal{B}_{xy}$ & $\begin{pmatrix}
        \substack{2q^{i+1}-q-1}\\
        \substack{q^{i+2}[n-k-i-1]}\\
        \substack{q^{i+2}[k-i-1]}
    \end{pmatrix}$ &  &  &  & \\
   \\
    $\mathcal{C}_{xy}$ &  & $\begin{pmatrix}
        0\\
        q[i-1]\\
        q[i-1]
    \end{pmatrix}$ &  &  & \\
   \\
    $\mathcal{A}_{xy}^{0}$ &  &  & $\begin{pmatrix}
        2q^{i}-q-2\\0\\0
    \end{pmatrix}$ & $\begin{pmatrix}
        0\\q^{i+1}[n-k-i]\\0
    \end{pmatrix}$ & $\begin{pmatrix}
        0\\0\\q^{i+1}[k-i]
    \end{pmatrix}$ \\
    \\
    $\mathcal{A}_{xy}^{+}$ & & & $\begin{pmatrix}
        0\\(q-1)[i]\\0
    \end{pmatrix}$ & $\begin{pmatrix}
       (q-1)[i]\\ q[n-k]-q^i-q \\0
    \end{pmatrix}$ & \\
    \\
    $\mathcal{A}_{xy}^{-}$ & & & $\begin{pmatrix}
        0\\0\\(q-1)[i]
    \end{pmatrix}$ & & $\begin{pmatrix}
       (q-1)[i] \\0\\ q[k]-q^i-q
    \end{pmatrix}$\\
\end{tabular}
\end{center}
The above table is the main result of this paper. We finish the paper with an appendix that contains relations between the generators of $\mathcal{H}$.

This paper is organized as follows. In Section \ref{prelim} we discuss some preliminaries on the projective geometry $P$. In Section \ref{H} we discuss the algebra $\mathcal{H}$ and its center. In Section \ref{somerelations} we present some relations that involve the matrices $R,L,F^{0},F^{+},F^{-}$. In Section \ref{grassmann} we discuss the Grassmann graph $J_q(n,k)$ and the two-vertex stabilizer $\Stab(x,y)$. In Section \ref{types}, for $\mathcal{O}\in \{\mathcal{B}_{xy},\mathcal{C}_{xy}\}$, a given vertex $w\in \mathcal{O}$, and a $\Stab(x,y)$-orbit $\mathcal{N}$, we count the number of vertices $z\in\mathcal{N}$ adjacent to $w$ such that the edge $wz$ has (i) type $0$, (ii) type $+$, (iii) type $-$. In Section \ref{entry}, for $w\in \mathcal{A}_{xy}$, we find the $(w,x)$-entries of many matrices that involve $F^{0}, F^{+}, F^{-}$. In Section \ref{typeII}, for $\mathcal{O}\in \{\mathcal{A}^{0}_{xy}, \mathcal{A}^{+}_{xy}, \mathcal{A}^{-}_{xy}\}$, a given vertex $w\in \mathcal{O}$, and a $\Stab(x,y)$-orbit $\mathcal{N}$, we count the number of vertices $z\in\mathcal{N}$ adjacent to $w$ such that the edge $wz$ has (i) type $0$, (ii) type $+$, (iii) type $-$. Section $\ref{appendix}$ is the appendix on the relations between the generators of $\mathcal{H}$.
\section{Projective geometry $P_q(n)$}
\label{prelim}

We now begin our formal argument. Let $\mathbb{F}_q$ denote a finite field with $q$ elements. Let $n\geq 1$. Let $V$ denote a vector space of dimension $n$ over $\mathbb{F}_q$. Let the set $P=P_q(n)$ consist of the subspaces of $V$. The set $P$, together with inclusion partial order, is a poset called a \emph{projective geometry}. For $u,v\in P$, we say that \emph{$v$ covers $u$} whenever $u\subseteq v$ and $\dim v=\dim u+1$. Let $\text{Mat}_{P}(\mathbb{C})$ denote the $\mathbb{C}$-algebra consisting of the matrices with rows and columns indexed by $P$ and all entries in $\mathbb{C}$. 

For $0\leq \ell \leq n$, let the set $P_{\ell}$ consist of elements of $P$ that have dimension $\ell$. We have a partition
\begin{equation}
\label{partition}
    P=\bigcup_{\ell=0}^n P_{\ell}.
\end{equation}

We now refine this partition. Let $k$ denote an integer such that $n>k\geq 1$. Pick $y\in P_k$. For $0\leq i\leq k$ and $0\leq j\leq n-k$, define
\begin{equation*}
    P_{i,j}=\{u\in P\mid \dim (u\cap y)=i,\;\dim u=i+j\}.
\end{equation*}
In the diagram below, we illustrate the set $P$ and the subsets $P_{i,j}$ ($0\leq i\leq k$, $0\leq j\leq n-k$).

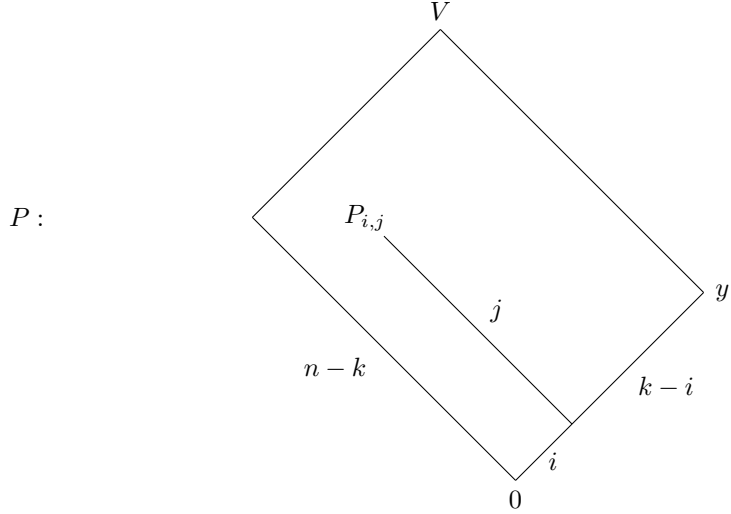
\begin{figure}[!ht]
\centering
{%
\begin{circuitikz}
\tikzstyle{every node}=[font=\normalsize]
\draw [short] (2.5,-3.5) -- (-0.9,-0.1);
\draw [short] (-0.9,-0.1) -- (1.6,2.4);
\draw [short] (1.6,2.4) -- (5,-1);
\draw [short] (5,-1) -- (2.5,-3.5);
\draw [short] (0.75,-0.25) -- (3.25,-2.75);
\node [font=\normalsize] at (1.6,2.65) {$V$};
\node [font=\normalsize] at (5.25,-1) {$y$};
\node [font=\normalsize] at (2.5,-3.75) {$0$};
\node [font=\normalsize] at (3,-3.25) {$i$};
\node [font=\normalsize] at (2.25,-1.25) {$j$};
\node [font=\normalsize] at (0.5,0) {$P_{i,j}$};
\node [font=\normalsize] at (4.5,-2.25) {$k-i$};
\node [font=\normalsize] at (0.1,-2) {$n-k$};
\node [font=\normalsize] at (-4,0) {$P:$};
\node [font=\normalsize] at (8,0) {$\;$};
\end{circuitikz}
}%
\caption{The projective geometry $P$ and the location of $P_{i,j}$.}
\label{projective}
\end{figure}
\newpage
    
    Note that for $0\leq \ell\leq n$,
    \begin{equation*}
        P_{\ell}=\bigcup_{i,j}P_{i,j},
    \end{equation*}
    where the union is over the ordered pairs $(i,j)$ such that $0\leq i\leq k$ and $0\leq j\leq n-k$ and $i+j=\ell$. 
    
    Earlier, we described the covering relation on $P$. We now give a refinement of this covering relation. 

\begin{lemma}{\rm{\cite[Lemma~2.3]{Seong3}}}
    \label{coverlem}
        Let $u,v\in P$ such that $v$ covers $u$. Write
        \begin{equation*}
            u\in P_{i,j}, \qquad \qquad v\in P_{r,s}.
        \end{equation*}
        Then either (i) $r=i+1$ and $s=j$, or (ii) $r=i$ and $s=j+1$.
    \end{lemma}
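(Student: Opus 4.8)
The statement to prove is Lemma~\ref{coverlem} (\cite[Lemma~2.3]{Seong3}), the refinement of the covering relation on $P$ in terms of the sets $P_{i,j}$.

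\textbf{Proof proposal.} The plan is to work directly from the definition of $P_{i,j}$ and track how the two statistics $\dim u$ and $\dim(u\cap y)$ change under a single covering step. First I would record what covering gives us: since $v$ covers $u$, we have $u\subseteq v$ and $\dim v=\dim u+1$. Writing $u\in P_{i,j}$ and $v\in P_{r,s}$, this immediately yields $\dim v=r+s=i+j+1$, so the pair $(r,s)$ satisfies $r+s=(i+j)+1$. Thus exactly one of $r,s$ exceeds its counterpart in $(i,j)$ by one, unless both change — so the whole content is to show that $r\in\{i,i+1\}$, i.e. that $\dim(v\cap y)-\dim(u\cap y)\in\{0,1\}$.

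For that, the key step is the observation that $u\cap y\subseteq v\cap y$ (because $u\subseteq v$), together with the fact that $v\cap y$ is sandwiched between $u\cap y$ and $v$, where $\dim v-\dim u=1$. Concretely, $u\cap y = u\cap(v\cap y)$, so $v\cap y$ and $u$ are two subspaces of $v$ whose intersection is $u\cap y$; by the modular (dimension) law,
\begin{equation*}
    \dim(v\cap y)+\dim u=\dim\bigl((v\cap y)+u\bigr)+\dim\bigl((v\cap y)\cap u\bigr)\leq \dim v+\dim(u\cap y).
\end{equation*}
Hence $\dim(v\cap y)-\dim(u\cap y)\leq \dim v-\dim u=1$. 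Combined with the trivial lower bound $\dim(v\cap y)\geq\dim(u\cap y)$ from $u\cap y\subseteq v\cap y$, we get $r-i\in\{0,1\}$. If $r=i+1$ then $s=j$ from $r+s=i+j+1$, which is case (i); if $r=i$ then $s=j+1$, which is case (ii). This also forces $0\le r\le k$ and $0\le s\le n-k$ automatically, so $v$ genuinely lies in one of the declared sets $P_{r,s}$.

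I do not anticipate a serious obstacle here; the argument is a short application of the modular law, and the only thing to be careful about is the bookkeeping of which of the two indices increments. One alternative I would keep in mind, in case a cleaner writeup is desired, is to pick a vector $\xi\in v\setminus u$ so that $v=u+\mathbb{F}_q\xi$, and then argue by cases on whether $\xi$ can be chosen inside $y$: if some such $\xi$ lies in $y+u$ one gets $\dim(v\cap y)=\dim(u\cap y)+1$, otherwise $v\cap y=u\cap y$. This is essentially the same computation phrased elementwise, and either version is short enough to include in full without the risk of a hidden gap.
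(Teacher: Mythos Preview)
Your argument is correct: the modular law bounds $\dim(v\cap y)-\dim(u\cap y)$ between $0$ and $\dim v-\dim u=1$, and then $r+s=i+j+1$ finishes the case split. The paper does not actually prove this lemma; it simply cites \cite[Lemma~2.3]{Seong3}, so there is no in-paper argument to compare against, and your self-contained proof is a reasonable substitute.
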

        
    \begin{definition}
    \label{slashcover}
        Referring to Lemma \ref{coverlem},  we say that \emph{$v$ $\slash$-covers $u$} whenever (i) holds, and \emph{$v$ $\backslash$-covers $u$} whenever (ii) holds.
    \end{definition}

    We illustrate Definition \ref{slashcover} using the diagrams below.
    \begin{figure}[!ht]
\centering
{%
\begin{circuitikz}
\tikzstyle{every node}=[font=\normalsize]
\draw [short] (2.5,-3.5) -- (-0.9,-0.1);
\draw [short] (-0.9,-0.1) -- (1.6,2.4);
\draw [short] (1.6,2.4) -- (5,-1);
\draw [short] (5,-1) -- (2.5,-3.5);
\draw [short] (0.9,-0.1) -- (1.1,0.1);
\node [font=\normalsize] at (1.6,2.65) {$V$};
\node [font=\normalsize] at (5.25,-1) {$y$};
\node [font=\normalsize] at (2.5,-3.75) {$0$};
\node [font=\normalsize] at (1.25,0.25) {$v$};
\node [font=\normalsize] at (0.75,-0.25) {$u$};
\node [font=\normalsize] at (4,-2.5) {$k$};
\node [font=\normalsize] at (0.1,-2) {$n-k$};
\node [font=\normalsize] at (2,-4.55) {Figure 2A: $v$ $\slash$-covers $u$.};
\end{circuitikz}
\hspace{0.25cm}
\begin{circuitikz}
\draw [short] (9.5,-3.5) -- (6.1,-0.1);
\draw [short] (6.1,-0.1) -- (8.6,2.4);
\draw [short] (8.6,2.4) -- (12,-1);
\draw [short] (12,-1) -- (9.5,-3.5);
\draw [short] (8.4,0.1) -- (8.6,-0.1);

\node [font=\normalsize] at (8.6,2.65) {$V$};
\node [font=\normalsize] at (12.25,-1) {$y$};
\node [font=\normalsize] at (9.5,-3.75) {$0$};
\node [font=\normalsize] at (8.25,0.25) {$v$};
\node [font=\normalsize] at (8.75,-0.25) {$u$};
\node [font=\normalsize] at (11,-2.5) {$k$};
\node [font=\normalsize] at (7.1,-2) {$n-k$};
\node [font=\normalsize] at (9,-4.55) {Figure 2B: $v$ $\backslash$-covers $u$.};
\end{circuitikz}
}%
\end{figure}

\renewcommand{\thefigure}{\arabic{figure}}

\setcounter{figure}{2}
\newpage

In what follows, we use the notation
    \begin{equation*}
        [m]=\frac{q^m-1}{q-1}\qquad \qquad (m\in \mathbb{Z}).
    \end{equation*}

\begin{lemma}{\rm{\cite[Lemma~5.1]{Watanabe}}}
    \label{cover}
        For $0\leq i\leq k$ and $0\leq j\leq n-k$, the following (i)--(iv) hold:
        \begin{enumerate}[label=(\roman*)]
            \item each element in $P_{i,j}$ $\slash$-covers exactly  $q^j[i]$ elements;

            \item each element in $P_{i,j}$ $\backslash$-covers exactly $[j]$ elements;

            \item each element in $P_{i,j}$ is $\slash$-covered by exactly $[k-i]$ elements;

            \item each element in $P_{i,j}$ is $\backslash$-covered by exactly $q^{k-i}[n-k-j]$ elements.
        \end{enumerate}
    \end{lemma}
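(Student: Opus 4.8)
The plan is to prove all four parts by elementary counting in the projective geometry, repeatedly using two standard facts: the number of hyperplanes of an $m$-dimensional $\mathbb{F}_q$-space equals $[m]$ (the same as the number of $1$-dimensional subspaces), and the subspaces of a space $W$ that contain a fixed subspace $W_0$ are in bijection with the subspaces of $W/W_0$. Throughout, fix $u\in P_{i,j}$, so that $\dim u=i+j$ and $\dim(u\cap y)=i$.

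For parts (i) and (ii) I work inside $u$. Every element $w$ covered by $u$ is a hyperplane of $u$, and by Lemma \ref{coverlem} exactly one of ``$u$ $\slash$-covers $w$'' or ``$u$ $\backslash$-covers $w$'' holds. Since $w\subseteq u$ we have $w\cap y=w\cap(u\cap y)$, so a dimension count gives $\dim(w\cap y)=i$ when $u\cap y\subseteq w$ and $\dim(w\cap y)=i-1$ otherwise; the first case is exactly when $u$ $\backslash$-covers $w$ (so $w\in P_{i,j-1}$), the second exactly when $u$ $\slash$-covers $w$ (so $w\in P_{i-1,j}$). The hyperplanes of $u$ containing $u\cap y$ correspond to the hyperplanes of $u/(u\cap y)$, a space of dimension $j$, so there are $[j]$ of them; this proves (ii). The total number of hyperplanes of $u$ is $[i+j]$, so the number not containing $u\cap y$ is $[i+j]-[j]=q^j[i]$, which proves (i).

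For parts (iii) and (iv) I work in the quotient $V/u$. Every element $v$ covering $u$ corresponds to a line of $V/u$, and $V/u$ has dimension $n-i-j$, so there are $[n-i-j]$ such $v$. Let $\bar y=(y+u)/u$, which has dimension $\dim y-\dim(y\cap u)=k-i$. I claim that $v$ $\slash$-covers $u$ precisely when the line $v/u$ lies in $\bar y$. The map $v\cap y\to v/u$ sending $z\mapsto z+u$ has kernel $(v\cap y)\cap u=u\cap y$ (using $u\subseteq v$), so $\dim(v\cap y)\in\{i,i+1\}$; if $\dim(v\cap y)=i+1$, picking $z\in(v\cap y)\setminus u$ gives $v=u+\mathbb{F}_q z$ with $z\in y$, hence $v/u\subseteq\bar y$. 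Conversely, if $v/u\subseteq\bar y$ then $v\subseteq y+u$, and writing any $w\in v\setminus u$ as $w=z+u_0$ with $z\in y$ and $u_0\in u$ shows $z=w-u_0\in(v\cap y)\setminus u$, so $\dim(v\cap y)=i+1$. Thus the $v$ that $\slash$-cover $u$ (those with $v\in P_{i+1,j}$) correspond to the lines of $V/u$ contained in $\bar y$, of which there are $[k-i]$, proving (iii); and the $v$ that $\backslash$-cover $u$ are the remaining $[n-i-j]-[k-i]=q^{k-i}[n-k-j]$, proving (iv).

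I expect no serious obstacle. The only point requiring genuine care is the equivalence in (iii)--(iv) between the dimension of $v\cap y$ jumping by one and the line $v/u$ sitting inside $\bar y$; once that is established, the two Gaussian-binomial identities $[i+j]-[j]=q^j[i]$ and $[n-i-j]-[k-i]=q^{k-i}[n-k-j]$ finish the computation. The secondary things to verify carefully are the elementary identities $w\cap y=w\cap(u\cap y)$ and $(v\cap y)\cap u=u\cap y$, which are immediate from $w\subseteq u$ and $u\subseteq v$ respectively, and keeping the two ``directions'' $\slash$ versus $\backslash$ consistent with Definition \ref{slashcover}.
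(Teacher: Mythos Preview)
Your proposal is correct: the counting arguments for (i)--(ii) inside $u$ and for (iii)--(iv) in the quotient $V/u$ are carried out carefully, the equivalence between $\dim(v\cap y)=i+1$ and $v/u\subseteq (y+u)/u$ is justified in both directions, and the Gaussian identities $[i+j]-[j]=q^j[i]$ and $[n-i-j]-[k-i]=q^{k-i}[n-k-j]$ are the right way to finish.

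As for comparison, the paper does not prove this lemma at all; it simply cites \cite[Lemma~5.1]{Watanabe}. So your write-up supplies a self-contained argument where the paper defers to the literature. That is a reasonable choice, and your proof is essentially the standard one (and presumably close to what appears in \cite{Watanabe}): there is really only one natural route here, namely identifying the $\backslash$-covered hyperplanes with those containing $u\cap y$ and the $\slash$-covering lines with those lying in $(y+u)/u$, then subtracting from the total.
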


    We briefly comment on the symmetry of $P$. Recall that $GL(V)$ consists of invertible $\mathbb{F}_q$-linear maps from $V$ to $V$. The action of $GL(V)$ on $V$ induces a permutation action of $GL(V)$ on $P$. The action preserves dimension and the inclusion partial order. 

    \section{The algebra $\mathcal{H}$ and its center}
    \label{H}

    In this section we recall from \cite[Section~7]{Watanabe} the subalgebra $\mathcal{H}$ of $\text{Mat}_{P}(\mathbb{C})$. We recall from \cite[Definition~3.1,~5.3]{Seong3} some useful matrices in $\mathcal{H}$, namely $R,L,F^0, F^+, F^-$. We recall from \cite[Theorem~4.1]{Seong3} a generating set for the center of $\mathcal{H}$.

    Throughout this paper we use the following notation. For $u,v\in P$ and $M\in \text{Mat}_{P}(\mathbb{C})$, let $M_{u,v}$ denote the $(u,v)$-entry of $M$.

    \begin{definition}
    \label{kdef}
        We define diagonal matrices $K_1, K_2 \in \text{Mat}_{P}(\mathbb{C})$ as follows. For $u \in P$ their $(u,u)$-entries are
        \begin{equation*}
            \bigl(K_1\bigl)_{u,u}=q^{\frac{k}{2}-i}, \qquad \qquad \bigl(K_2\bigl)_{u,u}=q^{j-\frac{n-k}{2}},
        \end{equation*}
        where $u\in P_{i,j}$. Note that $K_1,K_2$ are invertible.   
    \end{definition}

    \begin{definition}
    \label{l1l2r1r2def}
        We define matrices $L_1,L_2,R_1,R_2\in \text{Mat}_{P}(\mathbb{C})$ as follows. For $u,v\in P$ their $(u,v)$-entries are
    \begin{align*}
        &\bigl(L_1\bigr)_{u,v}=\begin{cases}
            1&\text{if }v\text{ $\slash$-covers }u,\\
            0&\text{if }v\text{ does not $\slash$-cover }u,
        \end{cases}
        &\bigl(L_2\bigr)_{u,v}=\begin{cases}
            1&\text{if }v\text{ $\backslash$-covers }u,\\
            0&\text{if }v\text{ does not $\backslash$-cover }u,
        \end{cases}\\
        &\bigl(R_1\bigr)_{u,v}=\begin{cases}
            1&\text{if }u\text{ $\slash$-covers }v,\\
            0&\text{if }u\text{ does not $\slash$-cover }v,
        \end{cases} 
        &\bigl(R_2\bigr)_{u,v}=\begin{cases}
            1&\text{if }u\text{ $\backslash$-covers }v,\\
            0&\text{if }u\text{ does not $\backslash$-cover }v.
        \end{cases}
    \end{align*}
    
    Note that $R_1=L_1^{\top}$ and $R_2=L_2^{\top}$, where $\top$ is the matrix-transpose. 
    \end{definition}

    \begin{definition}
    \label{hdef}
        Let $\mathcal{H}$ denote the subalgebra of $\text{Mat}_{P}(\mathbb{C})$ generated by $L_1, L_2, R_1, R_2, K_1^{\pm 1}, K_2^{\pm 1}$. 
    \end{definition}

    Next we define matrices $R,L,F^0, F^+, F^-$. 

    \begin{definition}
    \label{rlDef}
        Define
        \begin{equation}
        \label{rldef}
            R=L_1R_2,\qquad \qquad \qquad L=R_1L_2.
        \end{equation}
        By Lemma \ref{appendix2}(i),(ii) in the appendix, 
        \begin{equation}
        \label{rldef2}
            R=R_2L_1,\qquad \qquad \qquad L=L_2R_1.
        \end{equation}
        Note that $R,L\in \mathcal{H}$. Also note that $R=L^{\top}$.
    \end{definition}

     \begin{definition}
    \label{ftildedef}
        Define matrices $F^{0},F^{+},F^{-}\in \text{Mat}_{P}(\mathbb{C})$ as follows. For $u,v\in P$ their $(u,v)$-entries are
        \begin{align*}
            \bigl(F^{0}\bigr)_{u,v}&=\begin{cases}
                1&\text{if $u+v$ $/$-covers each of $u$ and $v$, which $\backslash$-covers $u\cap v$},\\
                0&\text{otherwise},
            \end{cases}\\
            \bigl(F^{+}\bigr)_{u,v}&=\begin{cases}
                1&\text{if $u+v$ $\backslash$-covers each of $u$ and $v$},\\
                0&\text{otherwise},
            \end{cases}\\
            \bigl(F^{-}\bigr)_{u,v}&=\begin{cases}
                1&\text{if each of $u$ and $v$ $/$-covers $u\cap v$},\\
                0&\text{otherwise}.
            \end{cases}
        \end{align*}
    \end{definition}

    We illustrate the definitions of $F^0, F^+, F^-$ using the diagrams below.

    \begin{figure}[!ht]
\centering
{%
\begin{circuitikz}
\tikzstyle{every node}=[font=\normalsize]
\draw [short] (2.5,-3.5) -- (-1,0);
\draw [short] (-1,0) -- (1.5,2.5);
\draw [short] (1.5,2.5) -- (5,-1);
\draw [short] (5,-1) -- (2.5,-3.5);
\draw [color=red, short] (2.3,-0.8) -- (2.5,-0.6);
\draw [color=red, short] (2.3,-1.2) -- (2.5,-1.4);
\node [font=\normalsize] at (1.5,2.75) {$V$};
\node [font=\normalsize] at (5.25,-1) {$y$};
\node [font=\normalsize] at (2.5,-3.75) {$0$};
\node [font=\normalsize] at (2.6,-0.4) {$u+v$};
\node [font=\normalsize] at (2,-1) {$u,v$};
\node [font=\normalsize] at (2.6,-1.6) {$u\cap v$};
\node [font=\normalsize] at (4,-2.5) {$k$};
\node [font=\normalsize] at (0.1,-2) {$n-k$};
\node [font=\normalsize] at (2,-4.75) {Figure 3A: $(F^0)_{u,v}=1$.};
\end{circuitikz}
\hspace{0.25cm}
\begin{circuitikz}
\tikzstyle{every node}=[font=\normalsize]
\draw [short] (2.5,-3.5) -- (-1,0);
\draw [short] (-1,0) -- (1.5,2.5);
\draw [short] (1.5,2.5) -- (5,-1);
\draw [short] (5,-1) -- (2.5,-3.5);
\draw [color=red, short] (1.7,-0.8) -- (1.5,-0.6);
\draw [color=red, short] (2.3,-1.2) -- (2.5,-1.4);
\node [font=\normalsize] at (1.5,2.75) {$V$};
\node [font=\normalsize] at (5.25,-1) {$y$};
\node [font=\normalsize] at (2.5,-3.75) {$0$};
\node [font=\normalsize] at (1.4,-0.4) {$u+v$};
\node [font=\normalsize] at (2,-1) {$u,v$};
\node [font=\normalsize] at (2.6,-1.6) {$u\cap v$};
\node [font=\normalsize] at (4,-2.5) {$k$};
\node [font=\normalsize] at (0.1,-2) {$n-k$};
\node [font=\normalsize] at (2,-4.75) {Figure 3B: $(F^+)_{u,v}=1$.};
\end{circuitikz}
}%
\end{figure}
\begin{figure}[!ht]
\centering
{%
\begin{circuitikz}
\tikzstyle{every node}=[font=\normalsize]
\draw [short] (2.5,-3.5) -- (-1,0);
\draw [short] (-1,0) -- (1.5,2.5);
\draw [short] (1.5,2.5) -- (5,-1);
\draw [short] (5,-1) -- (2.5,-3.5);
\draw [color=red, short] (2.3,-0.8) -- (2.5,-0.6);
\draw [color=red, short] (1.7,-1.2) -- (1.5,-1.4);
\node [font=\normalsize] at (1.5,2.75) {$V$};
\node [font=\normalsize] at (5.25,-1) {$y$};
\node [font=\normalsize] at (2.5,-3.75) {$0$};
\node [font=\normalsize] at (2.6,-0.4) {$u+v$};
\node [font=\normalsize] at (2,-1) {$u,v$};
\node [font=\normalsize] at (1.4,-1.6) {$u\cap v$};
\node [font=\normalsize] at (4,-2.5) {$k$};
\node [font=\normalsize] at (0.1,-2) {$n-k$};
\node [font=\normalsize] at (2,-4.75) {Figure 3C: $(F^-)_{u,v}=1$.};
\end{circuitikz}
}%
\end{figure}

\begin{definition}
\label{fadef}
    Define
    \begin{equation}
    \label{fdef}
        F=F^{0}+F^{+}+F^{-}.
    \end{equation}
    
\end{definition}

\begin{lemma}
        The matrices $F^{0}, F^{+}, F^{-}, F$ are contained in $\mathcal{H}$.
    \end{lemma}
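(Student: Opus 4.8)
The plan is to realize each of $F^{-}$, $F^{+}$, $F^{0}$ explicitly as an element of $\mathcal{H}$, built from the length-two products $R_1L_1$, $L_2R_2$, $R_2L_2$ of the generators in Definition \ref{l1l2r1r2def} (each of which lies in $\mathcal{H}$) together with $I$, $K_1^{\pm 1}$, $K_2^{\pm 1}$ from Definition \ref{kdef}; then $F=F^{0}+F^{+}+F^{-}$ belongs to $\mathcal{H}$ for free. The underlying observation is that for distinct $u,v\in P$ every defining condition in Definition \ref{ftildedef} forces $\dim u=\dim v$ together with $\dim(u+v)=\dim u+1=\dim(u\cap v)+2$, and in that situation a dimension count shows that the only candidate for an intermediate vertex of a two-step walk from $u$ to $v$ is $u+v$ (for an up--up walk) or $u\cap v$ (for an up--down or down--up walk). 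Hence $R_1L_1$, $L_2R_2$, $R_2L_2$ have all off-diagonal entries in $\{0,1\}$, with a transparent combinatorial reading.

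First I would handle $F^{-}$ and $F^{+}$, which are easy. For $u\ne v$ one reads off that $(R_1L_1)_{u,v}=1$ iff $u$ and $v$ both $\slash$-cover $u\cap v$, i.e. iff $(F^{-})_{u,v}=1$; similarly $(L_2R_2)_{u,v}=1$ iff $u+v$ $\backslash$-covers each of $u$ and $v$, i.e. iff $(F^{+})_{u,v}=1$. The diagonal entries $(R_1L_1)_{u,u}$ and $(L_2R_2)_{u,u}$ count the elements $\slash$-covered by $u$ and the elements $\backslash$-covering $u$, which by Lemma \ref{cover}(i),(iv) equal $q^{j}[i]$ and $q^{k-i}[n-k-j]$ for $u\in P_{i,j}$; using Definition \ref{kdef} one checks that the diagonal matrices with $(u,u)$-entries $q^{i+j}$, $q^{j}$, $q^{k-i}$, $q^{n-i-j}$ (for $u\in P_{i,j}$) equal $q^{n/2}K_1^{-1}K_2$, $q^{(n-k)/2}K_2$, $q^{k/2}K_1$, $q^{n/2}K_1K_2^{-1}$, so the diagonal corrections lie in $\mathcal{H}$. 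Thus $F^{-}=R_1L_1-D_{-}$ and $F^{+}=L_2R_2-D_{+}$ with $D_{-},D_{+}\in\mathcal{H}$.

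The main step is $F^{0}$, for which I would use $R_2L_2$. For $u\ne v$, as above, $(R_2L_2)_{u,v}=1$ iff $u$ and $v$ both $\backslash$-cover $u\cap v$; I claim this holds iff $(F^{0})_{u,v}=1$ or $(F^{+})_{u,v}=1$, these two alternatives being mutually exclusive because the first forces $u+v$ to $\slash$-cover $u$ and the second forces it to $\backslash$-cover $u$. Granting the claim, and noting $(R_2L_2)_{u,u}=[j]$ for $u\in P_{i,j}$ by Lemma \ref{cover}(ii) with $\operatorname{diag}([j])=(q-1)^{-1}(q^{(n-k)/2}K_2-I)\in\mathcal{H}$, we get $F^{0}=R_2L_2-F^{+}-D'$ with $D'\in\mathcal{H}$ diagonal; since $F^{+}\in\mathcal{H}$ by the previous step, $F^{0}\in\mathcal{H}$, and therefore $F\in\mathcal{H}$. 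The easy half of the claim: if $u,v$ both $\backslash$-cover $u\cap v\in P_{a,b}$ then $u,v\in P_{a,b+1}$, and $u+v$ (which covers $u$) lies in $P_{a+1,b+1}$ or $P_{a,b+2}$, producing the $F^{0}$ case or the $F^{+}$ case respectively; and if $(F^{0})_{u,v}=1$ then $u$ and $v$ $\backslash$-cover $u\cap v$ directly from Definition \ref{ftildedef}.

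The one point I expect to require genuine care — and which I regard as the crux — is the remaining implication: if $u+v$ $\backslash$-covers both $u$ and $v$, then $u$ and $v$ both $\backslash$-cover $u\cap v$ (equivalently, there is no diamond $u\cap v\subset u,v\subset u+v$ that is ``$\slash$ on the two bottom edges and $\backslash$ on the two top edges''). The key is the elementary squeeze: if $a\subseteq b\subseteq c$ in $P$ with $\dim(a\cap y)=\dim(c\cap y)$, then $a\cap y=b\cap y=c\cap y$. Since $u+v$ $\backslash$-covers $u$ and $v$, the subspaces $u\cap y$, $v\cap y$, $(u+v)\cap y$ all have the same dimension and satisfy $u\cap y,\,v\cap y\subseteq(u+v)\cap y$, so the squeeze gives $u\cap y=v\cap y=(u+v)\cap y$; then $u\cap y=v\cap y\subseteq u\cap v$ and $u\cap y\subseteq y$ give $u\cap y\subseteq(u\cap v)\cap y\subseteq u\cap y$, hence $\dim((u\cap v)\cap y)=\dim(u\cap y)$, which says exactly that $u$ — and by symmetry $v$ — $\backslash$-covers $u\cap v$. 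After this, what remains is just the routine $q$-power bookkeeping identifying the diagonal matrices above as words in $K_1^{\pm 1}$, $K_2^{\pm 1}$.
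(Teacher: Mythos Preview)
Your proof is correct. The paper's own proof is a one-line citation to \cite[Lemma~3.5]{Seong3}; the very next lemma in the paper (Lemma~\ref{f0}, also cited from \cite{Seong3}) records exactly the formulas your argument produces, namely $F^{-}=R_1L_1-\text{(diagonal)}$, $F^{+}=L_2R_2-\text{(diagonal)}$, and $F^{0}=R_2L_2-L_2R_2+\text{(diagonal)}$, the last of which is your $F^{0}=R_2L_2-F^{+}-D'$ after substituting $F^{+}=L_2R_2-D_{+}$. So you have reconstructed the cited argument in self-contained form, including the combinatorial step (your ``crux'') that the $F^{+}$ condition forces both $u$ and $v$ to $\backslash$-cover $u\cap v$; the approaches coincide.
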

    \begin{proof}
        For the matrices $F^{0}, F^{+}, F^{-}$ see \cite[Lemma~3.5]{Seong3}. By construction, $F$ is also contained in $\mathcal{H}$.
    \end{proof}

Next we express $F^{0}, F^{+}, F^{-}$ in terms of the generators of $\mathcal{H}$.
\begin{lemma}{\rm{\cite[Lemma~3.2--3.4]{Seong3}}}
    \label{f0}
        The matrices $F^{0}, F^{+}, F^{-}$ satisfy
        \begin{align}
            F^{0}&=L_1R_1-R_1L_1+(q-1)^{-1}\Bigl(q^{\frac{n}{2}}K_1^{-1}K_2-q^{\frac{k}{2}}K_1-q^{\frac{n-k}{2}}K_2+I\Bigr)\label{f01}\\
            &=R_2L_2-L_2R_2+(q-1)^{-1}\Bigl(q^{\frac{n}{2}}K_1K_2^{-1}-q^{\frac{k}{2}}K_1-q^{\frac{n-k}{2}}K_2+I\Bigr)\label{f02},\\
            F^{+}&=L_2R_2-q^{\frac{k}{2}}(q-1)^{-1}K_1\Bigl(q^{\frac{n-k}{2}}K_2^{-1}-I\Bigr),\label{f+}\\
            F^{-}&=R_1L_1-q^{\frac{n-k}{2}}(q-1)^{-1}\Bigl(q^{\frac{k}{2}}K_1^{-1}-I\Bigr)K_2.\label{f-}
        \end{align}
    \end{lemma}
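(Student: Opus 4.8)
The plan is to verify the four identities \eqref{f01}--\eqref{f-} entrywise, comparing $(u,v)$-entries of both sides for all $u,v\in P$. The starting point is the combinatorial reading of the relevant products: for $u,v\in P$, the entry $(L_1R_1)_{u,v}$ equals the number of $w\in P$ that $\slash$-cover both $u$ and $v$, the entry $(R_1L_1)_{u,v}$ equals the number of $w$ that are $\slash$-covered by both, and $(L_2R_2)_{u,v}$, $(R_2L_2)_{u,v}$ are the analogous counts with $\backslash$-covers. Since $K_1,K_2$ are diagonal, the $K$-terms on the right-hand sides affect only diagonal entries, so the verification splits into the cases $u=v$ and $u\ne v$.

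For $u=v\in P_{i,j}$: the entries $(L_1R_1)_{u,u}$, $(L_2R_2)_{u,u}$, $(R_1L_1)_{u,u}$, $(R_2L_2)_{u,u}$ count respectively the elements that $\slash$-cover $u$, the elements that $\backslash$-cover $u$, the elements $\slash$-covered by $u$, and the elements $\backslash$-covered by $u$; by Lemma \ref{cover} these equal $[k-i]$, $q^{k-i}[n-k-j]$, $q^j[i]$, $[j]$. On the other side, $(F^0)_{u,u}=(F^+)_{u,u}=(F^-)_{u,u}=0$ because $u$ does not cover itself. It then remains to evaluate the diagonal entries of the $K$-matrix expressions using $(K_1)_{u,u}=q^{k/2-i}$ and $(K_2)_{u,u}=q^{j-(n-k)/2}$ and to check the resulting scalar identities; for instance in \eqref{f+} the diagonal $K$-term equals $q^{k-i}[n-k-j]=(L_2R_2)_{u,u}$, so both sides are $0$, and the other three cases are similar $q$-arithmetic.

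For $u\ne v$: the $K$-terms vanish, so I must show $(L_1R_1-R_1L_1)_{u,v}=(F^0)_{u,v}$, $(R_2L_2-L_2R_2)_{u,v}=(F^0)_{u,v}$, $(L_2R_2)_{u,v}=(F^+)_{u,v}$, and $(R_1L_1)_{u,v}=(F^-)_{u,v}$. If $\dim u\ne\dim v$ there is no common upper or lower cover, so every one of these entries is $0$. If $\dim u=\dim v=d$, then any common upper cover of $u,v$ contains $u+v$ and has dimension $d+1$, hence must equal $u+v$ and can exist only when $\dim(u+v)=d+1$; dually any common lower cover must equal $u\cap v$ and exists only when $\dim(u\cap v)=d-1$, and these two conditions are equivalent. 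This at once gives $(L_2R_2)_{u,v}=1$ if and only if $u+v$ $\backslash$-covers each of $u$ and $v$, i.e.\ if and only if $(F^+)_{u,v}=1$; and likewise $(R_1L_1)_{u,v}=1$ if and only if each of $u,v$ $\slash$-covers $u\cap v$, i.e.\ if and only if $(F^-)_{u,v}=1$. For \eqref{f01}, in the remaining subcase (where $u+v$ covers both $u$ and $v$) I carry out a case analysis on whether $u+v$ $\slash$-covers or $\backslash$-covers each of $u$ and $v$: using the modular-law bound $\dim(u\cap y)+\dim(v\cap y)\le\dim((u+v)\cap y)+\dim((u\cap v)\cap y)$, each case pins down $\dim((u\cap v)\cap y)$, hence whether $u,v$ $\slash$-cover or $\backslash$-cover $u\cap v$, and comparison with the definition of $F^0$ gives $(L_1R_1-R_1L_1)_{u,v}=(F^0)_{u,v}$. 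Identity \eqref{f02} follows from the same analysis with $\slash$ and $\backslash$ interchanged (equivalently, from \eqref{f01} together with the appendix relations $R=L_1R_2=R_2L_1$, $L=R_1L_2=L_2R_1$ and the fact that the raising and lowering operators $L_1+L_2$, $R_1+R_2$ on $P$ have diagonal commutator, which together force $[L_1,R_1]+[L_2,R_2]$ to be diagonal).

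I expect the main obstacle to be the off-diagonal case analysis for $F^0$: one must check carefully that when $u+v$ $\slash$-covers both $u$ and $v$ the term $R_1L_1$ (resp.\ $L_2R_2$ in the form \eqref{f02}) removes exactly the configurations in which $u,v$ both $\slash$-cover (resp.\ both $\backslash$-cover) $u\cap v$, so that the surviving entry is $1$ precisely when $(F^0)_{u,v}=1$, and that the modular-law constraint leaves no relative position of $u$ and $v$ unaccounted for. The diagonal computation is routine but requires attention to the half-integer exponents appearing in $K_1,K_2$.
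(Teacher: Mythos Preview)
Your entrywise verification is correct: the diagonal check reduces to routine $q$-identities via Lemma~\ref{cover} and Definition~\ref{kdef}, and the off-diagonal analysis is sound---in particular, your use of the modular-law inequality $\dim(u\cap y)+\dim(v\cap y)\le\dim((u+v)\cap y)+\dim((u\cap v)\cap y)$ correctly excludes the one configuration ($u+v$ $\backslash$-covering while $u,v$ both $\slash$-cover $u\cap v$) that would otherwise make $(L_1R_1-R_1L_1)_{u,v}$ negative. One small omission: you should state explicitly that when $u\ne v$ lie in distinct cells $P_{i,j}\ne P_{i',j'}$ of the same dimension, all of $(L_1R_1)_{u,v}$, $(R_1L_1)_{u,v}$, $(F^0)_{u,v}$ vanish because a common $\slash$-(upper or lower) cover would force $i=i'$; this is immediate but not literally covered by your ``$\dim u\ne\dim v$'' sentence.

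As for comparison with the paper: there is nothing to compare. The paper does not prove this lemma at all; it simply cites \cite[Lemma~3.2--3.4]{Seong3}. Your argument therefore supplies a self-contained proof where the paper relies on an external reference. Your parenthetical derivation of \eqref{f02} from \eqref{f01} is essentially an appeal to Lemma~\ref{appendix4} (which says $[L_1,R_1]+[L_2,R_2]$ is diagonal), so you may cite that directly rather than rederive it.
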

     
    Next we recall the generating set for the center of $\mathcal{H}$.

    \begin{lemma}{\rm{\cite[Theorem~4.1]{Seong3}}}
\label{central}
    The center of $\mathcal{H}$ is generated by the following three elements:
    \begin{align}
        \Omega_0&=q^{-\frac{n}{2}}\Bigl((q-1)F^{0}K_1^{-1}K_2^{-1}+q^{\frac{n-k}{2}}K_1^{-1}+q^{\frac{k}{2}}K_2^{-1}-K_1^{-1}K_2^{-1}\Bigr),\label{omega0}\\
        \Omega_1&=q^{-\frac{n-k}{2}}\Biggl(qF^{0}K_2^{-1}+(q-1)F^{-}K_2^{-1}+\frac{q^{\frac{k}{2}+1}K_1K_2^{-1}+q^{\frac{n}{2}+1}K_1^{-1}-qK_2^{-1}}{q-1}\Biggr)-\frac{q}{q-1}I,\label{omega1}\\ 
        \Omega_2&=q^{-\frac{k}{2}}\Biggl(qF^0K_1^{-1}+(q-1)F^{+}K_1^{-1}+\frac{q^{\frac{n-k}{2}+1}K_1^{-1}K_2+q^{\frac{n}{2}+1}K_2^{-1}-qK_1^{-1}}{q-1}\Biggr)-\frac{q}{q-1}I.\label{omega2}
    \end{align}
    \end{lemma}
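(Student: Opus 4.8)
The plan is to prove two statements: (i) each of $\Omega_0,\Omega_1,\Omega_2$ lies in the center $Z(\mathcal{H})$; and (ii) $\Omega_0,\Omega_1,\Omega_2$ generate $Z(\mathcal{H})$ as a $\mathbb{C}$-algebra. For (i), since $\mathcal{H}$ is generated by $L_1,L_2,R_1,R_2,K_1^{\pm1},K_2^{\pm1}$, it suffices to show each $\Omega_\nu$ commutes with these six matrices, and I would first shorten the list. The matrices $F^0,F^+,F^-$ each preserve the decomposition $\mathbb{C}^P=\bigoplus_{i,j}\mathbb{C}^{P_{i,j}}$, since the conditions defining them in Definition~\ref{ftildedef} force $\dim u=\dim v$ and $\dim(u\cap y)=\dim(v\cap y)$; as $K_1,K_2$ are diagonal they therefore commute with $F^0,F^+,F^-$, so $[\Omega_\nu,K_1]=[\Omega_\nu,K_2]=0$ for free. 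Moreover each condition in Definition~\ref{ftildedef} is symmetric in $u$ and $v$, so $F^0,F^+,F^-$ are symmetric matrices; together with $K_i^t=K_i$ and the commutativity just noted, the stated expressions for the $\Omega_\nu$ are fixed by matrix transpose. Since $L_i^t=R_i$, we get $[\Omega_\nu,L_i]=0\Rightarrow[\Omega_\nu,R_i]=0$, so only the six commutators $[\Omega_\nu,L_1]$ and $[\Omega_\nu,L_2]$ ($\nu=0,1,2$) need to be checked.

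To check those, I would substitute \eqref{f01}--\eqref{f-} for $F^0,F^+,F^-$, turning each $\Omega_\nu$ into a noncommutative word in $L_1,L_2,R_1,R_2,K_1^{\pm1},K_2^{\pm1}$, and then expand the commutators using the $q$-commutation relations among the generators recorded in the Appendix (Lemma~\ref{appendix2} and its companions): the way $K_1,K_2$ rescale $L_1,L_2$ and commute across directions, the $q$-commutation of $L_1$ with $L_2$, and the $q$-Serre-type identities relating $L_iR_i$ to $R_iL_i$ through $K_1,K_2$ (which are essentially \eqref{f01}--\eqref{f-} read in reverse). Each of the six commutators then telescopes to $0$; this is the routine but lengthy part of (i), and it is guided by the observation that, up to a diagonal correction, $\Omega_1$ and $\Omega_2$ each look like a Casimir element of one of the two quantum-$\mathfrak{sl}_2$-type subalgebras inside $\mathcal{H}$ (the one acting in the $i$-direction and the one acting in the $j$-direction), while $\Omega_0$ is a mixed Casimir built from $F^0$.

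For (ii), I would pass to the module $\mathbb{C}^P$. The triples $L_1,R_1,K_1$ and $L_2,R_2,K_2$ generate commuting copies of a quantum-$\mathfrak{sl}_2$-type algebra acting along the two coordinate directions of the array $(P_{i,j})_{0\le i\le k,\,0\le j\le n-k}$, so $\mathcal{H}$ is semisimple and $\mathbb{C}^P$ decomposes into irreducibles, the isomorphism class of each being labelled by a pair $(a,b)$ recording where the associated weight strings in the $i$- and $j$-directions begin; the number of labels equals $\dim Z(\mathcal{H})$. I would then compute the scalar by which each $\Omega_\nu$ acts on the irreducible $W_{a,b}$ --- a finite computation, carried out most conveniently on a highest-weight vector, where the raising contributions vanish --- and verify that $(a,b)\mapsto(\text{scalars of }\Omega_0,\Omega_1,\Omega_2\text{ on }W_{a,b})$ is injective. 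Since the primitive central idempotents of $\mathcal{H}$ span $Z(\mathcal{H})$ and, by this injectivity, each of them is a polynomial in $\Omega_0,\Omega_1,\Omega_2$ via (multivariate) Lagrange interpolation, together with (i) this gives $Z(\mathcal{H})=\mathbb{C}[\Omega_0,\Omega_1,\Omega_2]$.

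I expect (ii) to be the real obstacle. Centrality in (i) is a mechanical, if long, application of the Appendix relations, but (ii) requires genuinely identifying the module structure of $\mathbb{C}^P$ --- proving semisimplicity, listing all the irreducibles with their labels, and confirming that $\Omega_0,\Omega_1,\Omega_2$ separate those labels. The injectivity of the eigenvalue map is the single point at which the argument could fail, so that is where I would concentrate the effort.
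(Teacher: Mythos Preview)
The paper does not prove this lemma: it is quoted verbatim from \cite[Theorem~4.1]{Seong3} and no argument is supplied here. So there is no ``paper's own proof'' to compare your proposal against.

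That said, your outline is a reasonable sketch of how such a proof goes. For (i), the reductions you describe (commutation with $K_1,K_2$ is automatic from the bigrading, and transpose halves the remaining checks) are correct and standard; the actual commutator computations with $L_1,L_2$ are indeed mechanical once one substitutes \eqref{f01}--\eqref{f-} and applies the Appendix relations. For (ii), your plan is also the right shape: one needs the decomposition of $\mathbb{C}^P$ into $\mathcal{H}$-irreducibles (this is what Watanabe \cite{Watanabe} establishes, via the link to $U_q(\widehat{\mathfrak{sl}}_2)$), and then one checks that the triple of eigenvalues of $(\Omega_0,\Omega_1,\Omega_2)$ separates the isomorphism classes. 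You are right that this last injectivity check is where the content lies; it is not automatic, and in \cite{Seong3} the proof relies on the explicit parametrization of irreducibles from \cite{Watanabe}. Your proposal does not supply that parametrization, so as written it is an outline rather than a proof, but the strategy is sound.
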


    \begin{lemma}{\rm{\cite[Lemma~4.3]{Seong3}}}
\label{fcentral}
    We have
    \begin{align}
        F^{0}&=(q-1)^{-1}\Bigl(q^{\frac{n}{2}}\Omega_0K_1K_2-q^{\frac{k}{2}}K_1-q^{\frac{n-k}{2}}K_2+I\Bigr),\label{f0center}\\
        F^{+}&=(q-1)^{-1}\Biggl(q^{\frac{k}{2}}\Omega_2-(q-1)^{-1}\biggl(q^{\frac{n}{2}+1}\Bigl(\Omega_0K_2+K_2^{-1}\Bigr)-2q^{\frac{k}{2}+1}I\biggr)\Biggr)K_1,\label{f+center}\\
        F^{-}&=(q-1)^{-1}\Biggl(q^{\frac{n-k}{2}}\Omega_1-(q-1)^{-1}\biggl( q^{\frac{n}{2}+1}\Bigl(\Omega_0K_1+K_1^{-1}\Bigr)-2q^{\frac{n-k}{2}+1}I\biggr)\Biggr)K_2.\label{f-center}
    \end{align}
\end{lemma}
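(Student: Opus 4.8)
The plan is to read the three equations of Lemma~\ref{central} as a linear system expressing the central elements $\Omega_0,\Omega_1,\Omega_2$ in terms of $F^{0},F^{+},F^{-}$ with invertible diagonal ``coefficients'', and simply invert it. Two facts make every step mechanical. First, $K_1$ and $K_2$ are diagonal, hence commute with each other and with $K_1^{\pm 1},K_2^{\pm 1}$; second, $\Omega_0,\Omega_1,\Omega_2$ lie in the center of $\mathcal{H}$, hence commute with $K_1^{\pm 1}$ and $K_2^{\pm 1}$. Crucially, in each of \eqref{omega0}, \eqref{omega1}, \eqref{omega2} the relevant $F$-matrix occurs exactly once, followed on the right by a product of $K_i^{\pm 1}$; so we can isolate it purely by right-multiplication and never have to commute $F^{0},F^{+},F^{-}$ past anything.

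First I would treat $F^{0}$. Multiplying \eqref{omega0} through by $q^{n/2}$ and then on the right by $K_1K_2$ collapses $(q-1)F^{0}K_1^{-1}K_2^{-1}\cdot K_1K_2$ to $(q-1)F^{0}$ and rewrites the three remaining diagonal terms, yielding
\begin{equation*}
q^{\frac{n}{2}}\Omega_0K_1K_2=(q-1)F^{0}+q^{\frac{k}{2}}K_1+q^{\frac{n-k}{2}}K_2-I,
\end{equation*}
and solving for $F^{0}$ gives \eqref{f0center}.

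Next I would treat $F^{+}$, working from \eqref{omega2}. Moving the $\tfrac{q}{q-1}I$ term to the left, multiplying by $q^{k/2}$, and isolating $(q-1)F^{+}K_1^{-1}$ leaves an expression in $\Omega_2$, diagonal matrices, and the single term $qF^{0}K_1^{-1}$. Substituting \eqref{f0center} and using centrality of $\Omega_0$ to slide $K_1^{-1}$ through, one gets $qF^{0}K_1^{-1}=(q-1)^{-1}\bigl(q^{\frac{n}{2}+1}\Omega_0K_2-q^{\frac{k}{2}+1}I-q^{\frac{n-k}{2}+1}K_1^{-1}K_2+qK_1^{-1}\bigr)$; collecting the $(q-1)^{-1}$-weighted diagonal terms, the $K_1^{-1}K_2$ and $K_1^{-1}$ contributions cancel in pairs and what remains is $-q^{\frac{n}{2}+1}\bigl(\Omega_0K_2+K_2^{-1}\bigr)+2q^{\frac{k}{2}+1}I$. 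Hence $(q-1)F^{+}K_1^{-1}=q^{\frac{k}{2}}\Omega_2-(q-1)^{-1}\bigl(q^{\frac{n}{2}+1}(\Omega_0K_2+K_2^{-1})-2q^{\frac{k}{2}+1}I\bigr)$; right-multiplying by $K_1$ and dividing by $q-1$ gives \eqref{f+center}. The case of $F^{-}$ is identical under the interchange of $K_1$ with $K_2$, of $\Omega_2$ with $\Omega_1$, and of $k$ with $n-k$; it uses \eqref{omega1} together with the already-established \eqref{f0center}, and yields \eqref{f-center}.

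The argument is entirely routine linear algebra over the commutative subalgebra generated by the diagonal matrices $K_1,K_2$ and the central elements $\Omega_0,\Omega_1,\Omega_2$. There is no conceptual obstacle; the only step demanding care is the bookkeeping in the $F^{+}$ and $F^{-}$ derivations, where several $(q-1)^{-1}$-weighted diagonal terms coming from \eqref{omega2} (resp.\ \eqref{omega1}) and from the substitution of \eqref{f0center} must be combined and checked to cancel, leaving precisely the compact form displayed in the lemma.
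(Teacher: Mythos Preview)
Your proposal is correct. The paper does not supply its own proof of this lemma but simply cites it from \cite[Lemma~4.3]{Seong3}; your approach---solving the triangular linear system \eqref{omega0}--\eqref{omega2} for $F^{0},F^{+},F^{-}$ by right-multiplying by the appropriate $K_i^{\pm 1}$ and substituting the already-obtained \eqref{f0center} into the other two equations---is the natural and essentially forced one, and your bookkeeping checks out.
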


\begin{lemma}
    \label{commute}
        The matrices $F^{0}, F^{+}, F^{-}, F$ mutually commute.
    \end{lemma}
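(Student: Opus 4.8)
The plan is to deduce the result from the formulas of Lemma \ref{fcentral}, which express each of $F^{0}$, $F^{+}$, $F^{-}$ in terms of the central elements $\Omega_0,\Omega_1,\Omega_2$ and the diagonal matrices $K_1^{\pm 1},K_2^{\pm 1}$. The guiding observation is that the subalgebra of $\text{Mat}_P(\mathbb{C})$ generated by these particular matrices is commutative, so any collection of its elements mutually commute.

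Here are the steps I would carry out. First, by Lemma \ref{fcentral}, each of $F^{0}$, $F^{+}$, $F^{-}$ is a $\mathbb{C}$-polynomial in $\Omega_0,\Omega_1,\Omega_2,K_1^{\pm 1},K_2^{\pm 1}$; in particular all three lie in the subalgebra of $\text{Mat}_P(\mathbb{C})$ generated by these five matrices. Second, note that by Lemma \ref{central} the elements $\Omega_0,\Omega_1,\Omega_2$ lie in the center of $\mathcal{H}$, hence commute with $K_1^{\pm 1}$ and $K_2^{\pm 1}$ (which lie in $\mathcal{H}$ by Definition \ref{hdef}) and with one another, while $K_1,K_2$ are diagonal, so $K_1^{\pm 1}$ and $K_2^{\pm 1}$ commute among themselves. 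Third, conclude that the said subalgebra is commutative: every element of it is a $\mathbb{C}$-linear combination of monomials in the five generators, and using centrality of the $\Omega$'s one rewrites each such monomial as a product of $\Omega$'s times a diagonal matrix; two monomials of this shape visibly commute, whence so do any two elements of the subalgebra. Applying this to $F^{0}$, $F^{+}$, $F^{-}$ shows that they mutually commute. Finally, since $F=F^{0}+F^{+}+F^{-}$ by Definition \ref{fadef}, $F$ commutes with each of $F^{0}$, $F^{+}$, $F^{-}$ and trivially with itself.

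I do not anticipate a real obstacle here: essentially all the content sits in Lemma \ref{fcentral}, which is imported from \cite{Seong3}, and the remaining argument is the elementary commutativity observation above. The only point deserving a sentence of care is that step, namely that a subalgebra generated by central elements together with a commuting family of diagonal matrices is commutative, and this is immediate from the rearrangement just described. One could instead try to read the commutations $F^{0}F^{+}=F^{+}F^{0}$, and so on, off the identities (\ref{intro1})--(\ref{intro12}) and the entry table, but since those are themselves among the principal results of this paper, such an argument would be circular; the route through the center of $\mathcal{H}$ is both cleaner and logically prior.
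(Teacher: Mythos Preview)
Your argument is correct. The paper's own proof simply cites \cite[Lemma~4.5]{Seong3} for the mutual commutation of $F^{0},F^{+},F^{-}$ and then observes that $F$ is their sum; your proposal spells out what is presumably the proof of that cited lemma, using the expressions in Lemma~\ref{fcentral} (imported as \cite[Lemma~4.3]{Seong3}) together with the centrality of the $\Omega$'s from Lemma~\ref{central}. So the two approaches rest on the same mathematics from \cite{Seong3}; yours is more self-contained relative to what the present paper actually states, while the paper's is shorter by outsourcing the work to the reference.
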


    \begin{proof}
        The matrices $F^{0},F^{+},F^{-}$ mutually commute by \cite[Lemma~4.5]{Seong3}. By construction, the matrix $F$ commutes with each of $F^{0}, F^{+}, F^{-}$.
    \end{proof}

    \section{Some relations that involve $R,L,F^0, F^+, F^-$}
    \label{somerelations}
    In this section we present some useful relations that involve the matrices $R,L,F^{0}, F^{+}, F^{-}$.
    
    \begin{theorem}
        We have
        \begin{equation}
        \label{RF0}
            q^2 RF^0-F^0R+\Bigl(q^{\frac{k}{2}}K_1+q^{\frac{n-k}{2}}K_2-(q+1)I\Bigr)R=0.
        \end{equation}
    \end{theorem}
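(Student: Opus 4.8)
The plan is to reduce the identity to two easy inputs: the centrality of $\Omega_0$, and the way the diagonal matrices $K_1,K_2$ commute past $R$. I would first record the scaling relations
\[
K_1 R = q\, R K_1 , \qquad \qquad K_2 R = q\, R K_2 .
\]
These come from the bigrading of $P$. Writing $R=L_1R_2$ as in Definition \ref{rlDef} and using Definitions \ref{l1l2r1r2def} and \ref{slashcover}, a nonzero entry $R_{u,v}$ requires a common $t$ that $\slash$-covers $u$ and $\backslash$-covers $v$; by Lemma \ref{coverlem} this forces $u\in P_{i,j}$ and $v\in P_{i+1,j-1}$ for some $i,j$. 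For every such pair, $(K_1)_{u,u}/(K_1)_{v,v}=q^{\frac{k}{2}-i}/q^{\frac{k}{2}-i-1}=q$ and $(K_2)_{u,u}/(K_2)_{v,v}=q^{j-\frac{n-k}{2}}/q^{j-1-\frac{n-k}{2}}=q$, which gives the two relations entrywise (entries with $R_{u,v}=0$ are trivial on both sides). In particular $K_1K_2R=q^2RK_1K_2$. These relations are also immediate from the scaling relations among the generators collected in the appendix.

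Next I would substitute the expression for $F^0$ from (\ref{f0center}) of Lemma \ref{fcentral},
\[
F^0=(q-1)^{-1}\bigl(q^{\frac{n}{2}}\Omega_0 K_1 K_2-q^{\frac{k}{2}}K_1-q^{\frac{n-k}{2}}K_2+I\bigr),
\]
into $q^2RF^0-F^0R$ and move every $K$ to the right of $R$ via the scaling relations, using that $\Omega_0$ is central by Lemma \ref{central} (so $\Omega_0R=R\Omega_0$). In $q^2RF^0$ the $\Omega_0$-term equals $q^2q^{\frac{n}{2}}(q-1)^{-1}\Omega_0RK_1K_2$, and in $F^0R$ it equals the same after applying $K_1K_2R=q^2RK_1K_2$; hence these cancel. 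Collecting the surviving terms gives
\[
q^2RF^0-F^0R=(q-1)^{-1}\bigl((q-q^2)q^{\frac{k}{2}}RK_1+(q-q^2)q^{\frac{n-k}{2}}RK_2+(q^2-1)R\bigr)=-q\,q^{\frac{k}{2}}RK_1-q\,q^{\frac{n-k}{2}}RK_2+(q+1)R .
\]
Finally, rewriting $RK_1=q^{-1}K_1R$ and $RK_2=q^{-1}K_2R$ turns the right-hand side into $-\bigl(q^{\frac{k}{2}}K_1+q^{\frac{n-k}{2}}K_2-(q+1)I\bigr)R$, which is precisely (\ref{RF0}).

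The only substantive step is establishing $K_iR=qR K_i$; after that the argument is pure cancellation, and the centrality of $\Omega_0$ is what makes the cubic term disappear. If one prefers to avoid $\Omega_0$, the same identity follows by expanding $R=L_1R_2$ and $F^0$ via (\ref{f01}) (or (\ref{f02})) of Lemma \ref{f0} and reducing everything with the commutation relations among $L_1,L_2,R_1,R_2,K_1,K_2$ from the appendix; this works but is considerably longer, so I would keep the $\Omega_0$ argument as the main proof.
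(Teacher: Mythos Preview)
Your argument is correct and essentially the same as the paper's: both substitute the expression $F^0=(q-1)^{-1}\bigl(q^{\frac{n}{2}}\Omega_0 K_1K_2-q^{\frac{k}{2}}K_1-q^{\frac{n-k}{2}}K_2+I\bigr)$ from (\ref{f0center}), use centrality of $\Omega_0$ from Lemma \ref{central}, and the scaling relations $K_iR=qRK_i$ coming from Lemma \ref{appendix1}(i),(iv),(v),(viii). The only difference is organizational---the paper first isolates $RF^0$ as a closed form with everything on the left of $R$ and then substitutes, whereas you compute $q^2RF^0-F^0R$ directly and observe the $\Omega_0$-terms cancel.
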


    \begin{proof}
        Use (\ref{f0center}) and Lemma \ref{appendix1}(i),(iv),(v),(viii) in the appendix to obtain
        \begin{equation}
        \label{RF0=}
            RF^0=(q-1)^{-1}\Bigl(q^{\frac{n}{2}-2}\Omega_0K_1K_2-q^{\frac{k}{2}-1}K_1-q^{\frac{n-k}{2}-1}K_2+I\Bigr)R.
        \end{equation}
        Evaluate the left-hand side of (\ref{RF0}) using (\ref{f0center}), (\ref{RF0=}). The result follows.
        \end{proof}

    \begin{theorem}
        We have
        \begin{equation}
            \label{RF+1}
            qRF^+-F^+R-F^0R+(q-1)^{-1}\Bigl(q^{\frac{n}{2}+1}K_1K_2^{-1}-q^{\frac{k}{2}}K_1-q^{\frac{n-k}{2}}K_2+I\Bigr)R=0.
        \end{equation}
    \end{theorem}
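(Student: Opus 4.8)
The plan is to follow the template of the proof of (\ref{RF0}). First, using (\ref{f+center}) and the fact that the diagonal matrices $K_1,K_2$ commute, write $F^{+}$ in the form
\begin{equation*}
F^{+}=(q-1)^{-1}\Bigl(q^{\frac{k}{2}}\Omega_2 K_1-(q-1)^{-1}\bigl(q^{\frac{n}{2}+1}\Omega_0 K_1K_2+q^{\frac{n}{2}+1}K_1K_2^{-1}-2q^{\frac{k}{2}+1}K_1\bigr)\Bigr),
\end{equation*}
so that $F^{+}$ is a linear combination of $\Omega_2 K_1$, $\Omega_0 K_1K_2$, $K_1K_2^{-1}$, and $K_1$. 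Since $\Omega_0,\Omega_2$ lie in the center of $\mathcal{H}$ they commute with $R$, and by the commutation relations between $R$ and $K_1^{\pm1},K_2^{\pm1}$ in Lemma \ref{appendix1} (namely $RK_1=q^{-1}K_1R$ and $RK_2=q^{-1}K_2R$, hence $RK_2^{-1}=qK_2^{-1}R$, $RK_1K_2=q^{-2}K_1K_2R$, and $RK_1K_2^{-1}=K_1K_2^{-1}R$) one pushes $R$ to the right and obtains $RF^{+}$ as an explicit linear combination of $\Omega_2 K_1R$, $\Omega_0 K_1K_2R$, $K_1K_2^{-1}R$, and $K_1R$.

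Next, form $qRF^{+}-F^{+}R$. The $\Omega_2 K_1$- and $K_1$-contributions cancel, and tracking the surviving powers of $q$ gives
\begin{equation*}
qRF^{+}-F^{+}R=(q-1)^{-1}\Bigl(q^{\frac{n}{2}}\Omega_0 K_1K_2-q^{\frac{n}{2}+1}K_1K_2^{-1}\Bigr)R.
\end{equation*}
Finally, use (\ref{f0center}) in the form $F^{0}R=(q-1)^{-1}\bigl(q^{\frac{n}{2}}\Omega_0K_1K_2-q^{\frac{k}{2}}K_1-q^{\frac{n-k}{2}}K_2+I\bigr)R$: subtracting this cancels the central term, and we are left with $qRF^{+}-F^{+}R-F^{0}R=(q-1)^{-1}\bigl(-q^{\frac{n}{2}+1}K_1K_2^{-1}+q^{\frac{k}{2}}K_1+q^{\frac{n-k}{2}}K_2-I\bigr)R$, which is precisely the negative of the remaining explicit term in (\ref{RF+1}); adding that term back yields $0$.

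The argument is mechanical once the $R$--$K_i$ relations are in hand. The two places that require care are: (a) that $R$ commutes with $K_1K_2^{-1}$, because the factors $q^{-1}$ (from $K_1$) and $q$ (from $K_2^{-1}$) cancel — this is exactly what makes the $q^{\frac{n}{2}+1}K_1K_2^{-1}$ term appear in $qRF^{+}-F^{+}R$ with a coefficient asymmetric between $qRF^{+}$ and $F^{+}R$, reproducing the coefficient demanded by (\ref{RF+1}); and (b) that the central term $\Omega_0K_1K_2$ survives all $K_i$-manipulations and can only be removed by bringing in $F^{0}R$ via (\ref{f0center}), which is why $F^{0}R$ occurs in the statement. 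No genuinely hard step is expected beyond careful bookkeeping of the $q$-exponents.
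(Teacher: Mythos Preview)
Your proof is correct and follows essentially the same approach as the paper: compute $RF^{+}$ by pushing $R$ past the $K_i$'s via Lemma~\ref{appendix1}(i),(iv),(v),(viii) (using that $\Omega_0,\Omega_2$ are central), then combine with the expressions for $F^{+}R$ and $F^{0}R$ from (\ref{f+center}) and (\ref{f0center}). The paper records the intermediate formula for $RF^{+}$ as (\ref{RF+=}) and then evaluates (\ref{RF+1}) directly, whereas you first isolate $qRF^{+}-F^{+}R$ and only then subtract $F^{0}R$; this is a purely expository difference, and your explicit tracking of the $q$-exponents and the cancellation in $RK_1K_2^{-1}=K_1K_2^{-1}R$ is accurate.
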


    \begin{proof}
        Use (\ref{f+center}) and Lemma \ref{appendix1}(i),(iv),(v),(viii) in the appendix to obtain
        \begin{equation}
        \label{RF+=}
            RF^+=(q-1)^{-1}\Biggl(q^{\frac{k}{2}-1}\Omega_2-(q-1)^{-1}\biggl(q^{\frac{n}{2}-1}\Bigl(\Omega_0K_2+q^2K_2^{-1}\Bigr)-2q^{\frac{k}{2}}I\biggr)\Biggr)K_1R.
        \end{equation}
        Evaluate the right-hand side of (\ref{RF+1}) using (\ref{f0center}), (\ref{f+center}), (\ref{RF+=}). The result follows.
    \end{proof}

    \begin{theorem}
        We have
        \begin{equation}
        \label{RF-1}
            qRF^- -F^-R-F^0R+(q-1)^{-1}\Bigl(q^{\frac{n}{2}+1}K_1^{-1}K_2-q^{\frac{k}{2}}K_1-q^{\frac{n-k}{2}}K_2+I\Bigr)R=0.
        \end{equation}
    \end{theorem}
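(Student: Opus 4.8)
The plan is to mimic, essentially verbatim, the proofs of the two preceding theorems (equations \eqref{RF0} and \eqref{RF+1}), since \eqref{RF-1} is the ``$K_1^{-1}K_2$'' analogue of the ``$K_1K_2^{-1}$'' relation \eqref{RF+1}. Concretely, I would start from the central-element expression \eqref{f-center} for $F^{-}$, namely $F^{-}=(q-1)^{-1}\bigl(q^{\frac{n-k}{2}}\Omega_1-(q-1)^{-1}(q^{\frac{n}{2}+1}(\Omega_0K_1+K_1^{-1})-2q^{\frac{n-k}{2}+1}I)\bigr)K_2$, and then compute $RF^{-}$ by pushing $R$ through each factor. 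Since $\Omega_0,\Omega_1$ are central (Lemma \ref{central}) they commute with $R$ outright; the only thing that must be moved past $R$ is the diagonal part $K_1, K_1^{-1}, K_2$. For that I would invoke the appendix relations Lemma \ref{appendix1}(i),(iv),(v),(viii), which give the commutation rules $RK_1 = q\,K_1 R$ (or whatever the precise scalars are), $RK_2 = q^{-1}K_2 R$, etc., exactly as was used to derive \eqref{RF0=} and \eqref{RF+=}. This yields a clean formula
\begin{equation*}
    RF^{-}=(q-1)^{-1}\Biggl(q^{\frac{n-k}{2}-1}\Omega_1-(q-1)^{-1}\biggl(q^{\frac{n}{2}-1}\Bigl(\Omega_0K_1+q^{2}K_1^{-1}\Bigr)-2q^{\frac{n-k}{2}}I\biggr)\Biggr)K_2R,
\end{equation*}
which is the $F^{-}$-counterpart of \eqref{RF+=}.

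Next I would substitute this expression for $RF^{-}$, together with \eqref{f0center} for $F^{0}R$ and \eqref{f-center} for $F^{-}R$, into the left-hand side of \eqref{RF-1}. Here one has to be a little careful: $F^{0}R$ and $F^{-}R$ need the matrices in the ``$R$ on the right'' form, so I would either re-use \eqref{f0center}, \eqref{f-center} directly (they are already written with everything acting, and $R$ just multiplies on the right) or, if an intermediate commutation is needed to line up the $K_i$'s, again appeal to Lemma \ref{appendix1}. After this substitution every term is of the form (central element or diagonal matrix)$\cdot K_2 R$ or (diagonal)$\cdot R$, and I would collect coefficients of the independent ``building blocks'' $\Omega_1 K_2 R$, $\Omega_0 K_1 K_2 R$, $K_1^{-1}K_2 R$, $K_1 R$, $K_2 R$, $R$, and check that they all cancel, using $q^2\cdot q^{-1}\cdot q^{-1}=1$-type scalar identities to match the powers of $q$. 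The target relation is engineered so that the $\Omega_1$ and $\Omega_0 K_1 K_2$ contributions from $qRF^{-}$ are killed by those from $-F^{-}R$ and $-F^{0}R$, and the leftover purely-diagonal terms assemble into $-(q-1)^{-1}(q^{\frac{n}{2}+1}K_1^{-1}K_2-q^{\frac{k}{2}}K_1-q^{\frac{n-k}{2}}K_2+I)R$.

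The only genuine subtlety — and hence the step I would flag as the main obstacle — is getting the exact scalars in the $R$-versus-$K_i$ commutation relations right, i.e. pinning down precisely which of Lemma \ref{appendix1}(i),(iv),(v),(viii) gives $RK_1^{-1}=q^{?}K_1^{-1}R$ and $RK_2=q^{?}K_2R$, and then propagating those powers of $q$ through $\Omega_0 K_1$ and the scalar $q^{\frac{n}{2}+1}$. Once those are fixed the rest is bookkeeping identical in structure to the proof of \eqref{RF+1}. Since the excerpt stops at the statement, I would write the proof in the same compressed style as its neighbours: ``Use \eqref{f-center} and Lemma \ref{appendix1}(i),(iv),(v),(viii) in the appendix to obtain [the displayed formula for $RF^{-}$]. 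Evaluate the left-hand side of \eqref{RF-1} using \eqref{f0center}, \eqref{f-center}, and the formula for $RF^{-}$. The result follows.''
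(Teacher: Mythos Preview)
Your proposal is correct and matches the paper's proof essentially line for line: the paper also uses \eqref{f-center} together with Lemma \ref{appendix1}(i),(iv),(v),(viii) to obtain exactly your displayed formula for $RF^{-}$, and then evaluates \eqref{RF-1} using \eqref{f0center}, \eqref{f-center}, and this formula. Even your compressed write-up at the end is the paper's proof verbatim.
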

    
    \begin{proof}
        Use (\ref{f-center}) and Lemma \ref{appendix1}(i),(iv),(v),(viii) in the appendix to obtain
        \begin{equation}
        \label{RF-=}
            RF^-=(q-1)^{-1}\Biggl(q^{\frac{n-k}{2}-1}\Omega_1-(q-1)^{-1}\biggl(q^{\frac{n}{2}-1}\Bigl(\Omega_0K_1+q^2K_1^{-1}\Bigr)-2q^{\frac{n-k}{2}}I\biggr)\Biggr)K_2R.
        \end{equation}
        Evaluate the right-hand side of (\ref{RF-1}) using (\ref{f0center}), (\ref{f-center}), (\ref{RF-=}). The result follows.
    \end{proof}

    \begin{theorem}
        We have
        \begin{equation}
        \label{LF0}
            q^2F^0L-LF^0+\Bigl(q^{\frac{k}{2}+1}K_1+q^{\frac{n-k}{2}+1}K_2-(q+1)I\Bigr)L=0.
        \end{equation}
    \end{theorem}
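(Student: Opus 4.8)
The plan is to deduce (\ref{LF0}) from the already-proved relation (\ref{RF0}) by transposition. The matrices $F^{0}$, $K_1$, $K_2$, $I$ are all symmetric: for $F^{0}$ this is immediate from Definition \ref{ftildedef}, since the condition defining $(F^{0})_{u,v}$ is symmetric in $u$ and $v$; and $K_1,K_2,I$ are diagonal. Moreover $R=L^{t}$, hence $L=R^{t}$, by Definition \ref{rlDef}. So, writing $D=q^{\frac{k}{2}}K_1+q^{\frac{n-k}{2}}K_2-(q+1)I$ for the diagonal factor in (\ref{RF0}) and transposing the identity $q^2RF^0-F^0R+DR=0$ (using $(AB)^{t}=B^{t}A^{t}$), I obtain
\[
    q^2F^0L-LF^0+LD=0.
\]
It then remains only to rewrite $LD$ in the form $\bigl(q^{\frac{k}{2}+1}K_1+q^{\frac{n-k}{2}+1}K_2-(q+1)I\bigr)L$.

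For that I will use the commutation rules $LK_1=qK_1L$ and $LK_2=qK_2L$; these are transposes of the rules $RK_\ell=q^{-1}K_\ell R$ ($\ell=1,2$) underlying the proof of (\ref{RF0}), and they are among the generator relations in the appendix (Lemma \ref{appendix1}). (They also follow directly from the entrywise descriptions: $L_{u,v}\neq 0$ forces $u\in P_{i,j}$ and $v\in P_{i-1,j+1}$, so the $K_\ell$-eigenvalues attached to the row and column indices differ by exactly one power of $q$.) Since $I$ commutes with $L$, we get $LD=q^{\frac{k}{2}}LK_1+q^{\frac{n-k}{2}}LK_2-(q+1)L=q^{\frac{k}{2}+1}K_1L+q^{\frac{n-k}{2}+1}K_2L-(q+1)L$, which is precisely $\bigl(q^{\frac{k}{2}+1}K_1+q^{\frac{n-k}{2}+1}K_2-(q+1)I\bigr)L$. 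Substituting into the displayed identity yields (\ref{LF0}).

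Alternatively, and more in line with the proofs of (\ref{RF0})--(\ref{RF-1}), one can proceed directly: from (\ref{f0center}), using $L\Omega_0=\Omega_0L$ (as $\Omega_0$ is central) together with $LK_1=qK_1L$ and $LK_2=qK_2L$, one derives the analog of (\ref{RF0=}),
\[
    LF^0=(q-1)^{-1}\Bigl(q^{\frac{n}{2}+2}\Omega_0K_1K_2-q^{\frac{k}{2}+1}K_1-q^{\frac{n-k}{2}+1}K_2+I\Bigr)L,
\]
and then evaluates the left-hand side of (\ref{LF0}) using this together with (\ref{f0center}); the $\Omega_0K_1K_2$-terms cancel, and the remaining diagonal terms collapse (via $q^{m+1}-q^m=q^m(q-1)$ and $q^2-1=(q-1)(q+1)$) to exactly cancel the $+\bigl(q^{\frac{k}{2}+1}K_1+q^{\frac{n-k}{2}+1}K_2-(q+1)I\bigr)L$ term. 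Either way the computation is routine; the only point to watch is the direction of the $q$-shift when passing $K_1$, $K_2$ (and $\Omega_0K_1K_2$) through $L$ — it is the reverse of the shift for $R$ — which is what upgrades the exponents $\frac{k}{2},\frac{n-k}{2},\frac{n}{2}$ to $\frac{k}{2}+1,\frac{n-k}{2}+1,\frac{n}{2}+2$.
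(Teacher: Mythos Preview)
Your proposal is correct. Your alternative route---computing $LF^{0}$ from (\ref{f0center}) via $L\Omega_0=\Omega_0L$ and the shifts $LK_\ell=qK_\ell L$, then plugging into (\ref{LF0})---is exactly the paper's proof, which cites Lemma~\ref{appendix1}(ii),(iii),(vi),(vii) precisely to assemble those shifts.

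Your primary route, transposing (\ref{RF0}), is a genuinely different and more economical argument: it recycles the already-established $R$-identity rather than rerunning the $\Omega_0$-computation. The only ingredients beyond (\ref{RF0}) are the symmetry of $F^{0},K_1,K_2$ and $R=L^{t}$ (all in the definitions), plus the shift $LK_\ell=qK_\ell L$ to move $D$ across $L$; you correctly identify and justify these. One small quibble: Lemma~\ref{appendix1} lists relations between $K_\ell$ and the \emph{individual} generators $L_1,L_2,R_1,R_2$, not with $L=R_1L_2$ directly, so ``among the generator relations'' is slightly imprecise---but your entrywise argument (or the two-step composition via (ii),(iii),(vi),(vii)) covers this. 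The paper's uniform direct approach has the virtue of treating all six $R/L$--$F^{\bullet}$ relations the same way; your transposition shortcut exploits the $R\leftrightarrow L$ symmetry to halve the work.
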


    \begin{proof}
        Use (\ref{f0center}) and Lemma \ref{appendix1}(ii),(iii),(vi),(vii) in the appendix to obtain
        \begin{equation}
        \label{LF0=}
            LF^0=(q-1)^{-1}\Bigl(q^{\frac{n}{2}+2}\Omega_0K_1K_2-q^{\frac{k}{2}+1}K_1-q^{\frac{n-k}{2}+1}K_2+I\Bigr)L.
        \end{equation}
        Evaluate the left-hand side of (\ref{LF0}) using (\ref{f0center}), (\ref{LF0=}). The result follows.
    \end{proof}

    \begin{theorem}
        We have
        \begin{align}
        \label{LF+1}
            qF^{+}L-LF^{+}-LF^0+(q-1)^{-1}\Bigl(q^{\frac{n}{2}+1}K_1K_2^{-1}-q^{\frac{k}{2}+1}K_1-q^{\frac{n-k}{2}+1}K_2+I\Bigr)L=0.
        \end{align}
    \end{theorem}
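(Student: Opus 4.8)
The plan is to mimic the proofs of the three preceding theorems on $RF^0$, $RF^+$, $RF^-$ and of the theorem on $LF^0$. Specifically, I would first establish the analogue of \eqref{RF+=}, namely a formula expressing $LF^+$ as a left multiple of $L$ by a polynomial in the central elements $\Omega_0,\Omega_1,\Omega_2$ and the $K_i^{\pm1}$. To do this, start from the central expression \eqref{f+center} for $F^+$, write $F^+=(q-1)^{-1}\bigl(q^{k/2}\Omega_2-(q-1)^{-1}(q^{n/2+1}(\Omega_0K_2+K_2^{-1})-2q^{k/2+1}I)\bigr)K_1$, and push $L$ through from the left. Since $\Omega_0,\Omega_1,\Omega_2$ are central they commute with $L$ outright; the only real work is commuting $L$ past the diagonal matrices $K_1^{\pm1},K_2^{\pm1}$, and this is exactly what Lemma~\ref{appendix1}(ii),(iii),(vi),(vii) in the appendix is for (these are the parts used in the proof of \eqref{LF0}). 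Each such commutation introduces a power of $q$, so one gets $LF^+ = (q-1)^{-1}\bigl(q^{k/2+1}\Omega_2 - (q-1)^{-1}(q^{n/2+1}(\Omega_0K_2 + q^{-2}\cdot q^{?}K_2^{-1}) - 2q^{k/2+?}I)\bigr)K_1L$ for the appropriate exponent shifts dictated by those appendix relations.

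Next I would likewise record the already-established companion identity for $LF^0$, which is \eqref{LF0=}: $LF^0 = (q-1)^{-1}\bigl(q^{n/2+2}\Omega_0K_1K_2 - q^{k/2+1}K_1 - q^{(n-k)/2+1}K_2 + I\bigr)L$. With $LF^+$ and $LF^0$ both available as explicit right multiples $(\cdots)L$, substitute all of \eqref{f0center}, \eqref{f+center}, the new formula for $LF^+$, and \eqref{LF0=} into the left-hand side of \eqref{LF+1}. Everything collapses to an expression of the form $(\text{polynomial in }\Omega_0,\Omega_1,\Omega_2,K_1^{\pm1},K_2^{\pm1})\,L$, and the claim is that this polynomial is identically zero. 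Verifying that is a routine but careful bookkeeping computation: collect the coefficient of each monomial ($\Omega_2K_1$, $\Omega_0K_1K_2$, $K_1$, $K_2$, $I$, etc.) and check cancellation, using that the various $q$-power prefactors produced by the appendix commutation relations line up. The factor of $q$ (versus $q^2$) in front of $F^+L$ in \eqref{LF+1}, as opposed to the $q^2$ in front of $F^0L$ in \eqref{LF0}, is precisely what is needed to absorb the $\Omega_2$-term, just as the analogous factor in \eqref{RF+1} works on the $R$ side.

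The main obstacle is purely computational: getting the exponent shifts right when commuting $L=R_1L_2$ (equivalently $L_2R_1$ by \eqref{rldef2}) past $K_1$ and $K_2$. One must be disciplined about whether $L$ sits to the left or to the right of each $K_i$, since $L$ acts by lowering $\dim(u\cap y)$ by raising $j$ in one step and then adjusting, so $K_1 L = q^{c_1} L K_1$ and $K_2 L = q^{c_2} L K_2$ for specific integers $c_1,c_2$ read off from Lemma~\ref{appendix1}; a sign error there propagates into every term. A secondary, much lighter, point is that one should double-check that the inhomogeneous (scalar and diagonal) pieces coming from \eqref{f+center} and \eqref{f0center} combine to give exactly the diagonal matrix $(q-1)^{-1}\bigl(q^{n/2+1}K_1K_2^{-1}-q^{k/2+1}K_1-q^{(n-k)/2+1}K_2+I\bigr)$ appearing in \eqref{LF+1}. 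No conceptual difficulty is expected; the identity is the ``$L$-side mirror'' of \eqref{RF+1}, and indeed one could alternatively obtain it by transposing \eqref{RF+1} and using $R=L^t$, $F^0=(F^0)^t$, $F^+=(F^+)^t$ together with $K_i^t=K_i$ — though that route requires checking that $F^+$ is symmetric, so I would present the direct central-element computation as the primary argument.
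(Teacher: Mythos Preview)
Your proposal is correct and follows essentially the same route as the paper: derive an explicit formula for $LF^{+}$ by starting from \eqref{f+center} and commuting $L$ past $K_1,K_2$ via Lemma~\ref{appendix1}(ii),(iii),(vi),(vii), then substitute this together with \eqref{f0center} and \eqref{f+center} into \eqref{LF+1} and check that everything cancels. The paper records the intermediate identity as $LF^+=(q-1)^{-1}\bigl(q^{k/2+1}\Omega_2-(q-1)^{-1}(q^{n/2+1}(q^2\Omega_0K_2+K_2^{-1})-2q^{k/2+2}I)\bigr)K_1L$, which pins down the exponent shifts you left as question marks.
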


    \begin{proof}
        Use (\ref{f+center}) and Lemma \ref{appendix1}(ii),(iii),(vi),(vii) in the appendix to obtain
        \begin{equation}
        \label{LF+=}
            LF^+=(q-1)^{-1}\Biggl(q^{\frac{k}{2}+1}\Omega_2-(q-1)^{-1}\biggl(q^{\frac{n}{2}+1}\Bigl(q^2\Omega_0K_2+K_2^{-1}\Bigr)-2q^{\frac{k}{2}+2}I\biggr)\Biggr)K_1L.
        \end{equation}
        Evaluate the right-hand side of (\ref{LF+1}) using (\ref{f0center}), (\ref{f+center}), (\ref{LF+=}). The result follows.
    \end{proof}

    \begin{theorem}
        We have
        \begin{equation}
        \label{LF-1}
            qF^{-}L-LF^{-}-LF^0+(q-1)^{-1}\Bigl(q^{\frac{n}{2}+1}K_1^{-1}K_2-q^{\frac{k}{2}+1}K_1-q^{\frac{n-k}{2}+1}K_2+I\Bigr)L=0.
        \end{equation}
    \end{theorem}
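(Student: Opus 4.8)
The statement to prove is the relation
\begin{equation*}
    qF^{-}L-LF^{-}-LF^0+(q-1)^{-1}\Bigl(q^{\frac{n}{2}+1}K_1^{-1}K_2-q^{\frac{k}{2}+1}K_1-q^{\frac{n-k}{2}+1}K_2+I\Bigr)L=0,
\end{equation*}
and the plan is to mirror exactly the argument already used for \eqref{LF+1}, with the roles of the $+$ and $-$ data interchanged. The engine behind all six of these theorems is the same: rewrite the relevant $F$-matrix in terms of the central generators $\Omega_0,\Omega_1,\Omega_2$ and the $K_i$ via Lemma \ref{fcentral}, then push $L$ through the $K_i$'s using the commutation rules in Lemma \ref{appendix1}(ii),(iii),(vi),(vii) of the appendix, exploiting that the $\Omega$'s are central and hence commute with $L$ outright.

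First I would invoke \eqref{f-center}, which expresses $F^{-}$ as
\begin{equation*}
    F^{-}=(q-1)^{-1}\Biggl(q^{\frac{n-k}{2}}\Omega_1-(q-1)^{-1}\biggl( q^{\frac{n}{2}+1}\Bigl(\Omega_0K_1+K_1^{-1}\Bigr)-2q^{\frac{n-k}{2}+1}I\biggr)\Biggr)K_2,
\end{equation*}
and then compute $LF^{-}$ by moving $L$ to the right past $K_2$, $\Omega_0 K_1$, and $K_1^{-1}$. Since $L=R_1L_2$ shifts the coordinates $(i,j)\mapsto$ (the appropriate neighbour), the relations in Lemma \ref{appendix1}(ii),(iii),(vi),(vii) will produce scalar powers of $q$ in front of each $K_i$ when $L$ is commuted past it; the net effect should be the analogue of \eqref{LF+=}, namely an identity of the shape
\begin{equation*}
    LF^-=(q-1)^{-1}\Biggl(q^{\frac{n-k}{2}+1}\Omega_1-(q-1)^{-1}\biggl(q^{\frac{n}{2}+1}\Bigl(q^2\Omega_0K_1+K_1^{-1}\Bigr)-2q^{\frac{n-k}{2}+2}I\biggr)\Biggr)K_2L.
\end{equation*}
(The precise powers of $q$ are whatever the appendix relations dictate; I expect the pattern to match \eqref{LF+=} with $K_1\leftrightarrow K_2$ and $\frac{k}{2}\leftrightarrow\frac{n-k}{2}$, since $F^-$ is obtained from $F^+$ by the symmetry swapping the two "arms" of the projective geometry.) I would then separately record $F^-L$ from \eqref{f-center} directly (here $L$ stays on the right, no commutation needed beyond what is already encoded) and $LF^0$ from \eqref{LF0=}, both already available.

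The final step is purely formal: substitute these expressions for $qF^-L$, $LF^-$, $LF^0$ into the left-hand side of the claimed identity and verify that everything cancels. After the substitution, every term is a product of a central element (a polynomial in $\Omega_0,\Omega_1$) with a monomial in $K_1^{\pm1},K_2^{\pm1}$, times $L$ on the right; collecting by monomial type, the $\Omega_1 K_2 L$ terms should cancel between $qF^-L$ and $LF^-$, the $\Omega_0 K_1 K_2 L$ terms should cancel among all three contributions together with the $K_1^{-1}K_2 L$ piece of the explicit correction term, and the remaining $K_1 L$, $K_2 L$, $L$ terms should match the explicit polynomial $(q-1)^{-1}(q^{\frac{n}{2}+1}K_1^{-1}K_2-q^{\frac{k}{2}+1}K_1-q^{\frac{n-k}{2}+1}K_2+I)L$. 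The only place where anything can go wrong is bookkeeping the $q$-powers generated when $L$ is commuted past the $K_i$'s, so the main obstacle is simply getting the analogue of \eqref{LF+=} exactly right; once that is in hand, the cancellation is mechanical and identical in structure to the proof of \eqref{LF+1}.
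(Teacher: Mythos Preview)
Your proposal is correct and matches the paper's own proof essentially line for line: the paper also invokes \eqref{f-center} together with Lemma \ref{appendix1}(ii),(iii),(vi),(vii) to derive the very identity you wrote down for $LF^-$ (this is \eqref{LF-=} in the paper), and then substitutes \eqref{f0center}, \eqref{f-center}, and \eqref{LF-=} into \eqref{LF-1} to verify the cancellation. Your anticipated formula for $LF^-$ is exactly the one the paper obtains, so there is nothing to add.
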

    
    \begin{proof}
        Use (\ref{f-center}) and Lemma \ref{appendix1}(ii),(iii),(vi),(vii) in the appendix to obtain
        \begin{equation}
        \label{LF-=}
            LF^-=(q-1)^{-1}\Biggl(q^{\frac{n-k}{2}+1}\Omega_1-(q-1)^{-1}\biggl(q^{\frac{n}{2}+1}\Bigl(q^2\Omega_0K_1+K_1^{-1}\Bigr)-2q^{\frac{n-k}{2}+2}I\biggr)\Biggr)K_2L.
        \end{equation}
        Evaluate the right-hand side of (\ref{LF-1}) using (\ref{f0center}), (\ref{f-center}), (\ref{LF-=}).
    \end{proof}

    \begin{theorem}
        We have
        \begin{equation}
        \label{LR}
            \begin{aligned}
            0=&\;LR-qF^+F^- -(q-1)^{-1}\Bigl(\bigl(q^{\frac{n}{2}+1}K_1^{-1}K_2-q^{\frac{n-k}{2}+1}K_2\bigr)F^{+}+\bigl(q^{\frac{n}{2}+1}K_1K_2^{-1}-q^{\frac{k}{2}+1}K_1\bigr)F^{-}\Bigr)\\
            &\;\qquad \qquad \qquad -(q-1)^{-2}\Bigl(q^{\frac{n}{2}+1}K_1K_2-q^{n-\frac{k}{2}+1}K_1-q^{\frac{n+k}{2}+1}K_2+q^{n+1}I\Bigr).
        \end{aligned}
        \end{equation}
    \end{theorem}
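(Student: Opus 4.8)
The plan is to rewrite $LR$ as a scalar multiple of the product $(L_2R_2)(R_1L_1)$ and then substitute the formulas for $L_2R_2$ and $R_1L_1$ coming from (\ref{f+}) and (\ref{f-}).

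First I would use $L=L_2R_1$ and $R=L_1R_2$ from Definition~\ref{rlDef} to write $LR=L_2R_1L_1R_2$. Commuting $L_1$ past $R_2$ (this is exactly the identity $L_1R_2=R_2L_1$ underlying (\ref{rldef2}), i.e.\ Lemma~\ref{appendix2} of the appendix) gives $LR=L_2R_1R_2L_1$. Next I would apply the commutation relation $R_1R_2=qR_2R_1$ from the appendix; if it is not recorded there in this form, it follows from a short count: given $w\subseteq u$ with $\dim u=\dim w+2$ and $\dim(u\cap y)=\dim(w\cap y)+1$, exactly one of the $q+1$ subspaces $v$ with $w\subsetneq v\subsetneq u$ satisfies $\dim(v\cap y)=\dim(u\cap y)$ (namely $v=w+(u\cap y)$), while the other $q$ satisfy $\dim(v\cap y)=\dim(w\cap y)$. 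Reading off $(R_1R_2)_{u,w}$ and $(R_2R_1)_{u,w}$ from Definition~\ref{slashcover} then gives $q$ and $1$ respectively, and the two matrices have the same support, so $R_1R_2=qR_2R_1$. Hence $LR=qL_2R_2R_1L_1=q(L_2R_2)(R_1L_1)$.

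Now I would substitute $L_2R_2=F^++q^{\frac{k}{2}}(q-1)^{-1}K_1(q^{\frac{n-k}{2}}K_2^{-1}-I)$ from (\ref{f+}) and $R_1L_1=F^-+q^{\frac{n-k}{2}}(q-1)^{-1}(q^{\frac{k}{2}}K_1^{-1}-I)K_2$ from (\ref{f-}), and expand the product into four pieces: $F^+F^-$, then $F^+$ times a diagonal matrix, then a diagonal matrix times $F^-$, then a product of two diagonal matrices. To put the middle two pieces into the form of the statement I would use that $F^+$ and $F^-$ each commute with $K_1$ and with $K_2$; this holds because $F^+$ and $F^-$ preserve every $P_{i,j}$ in the relevant coordinate (equivalently, it follows from Lemma~\ref{fcentral} since $\Omega_0,\Omega_1,\Omega_2$ are central and $K_1,K_2$ are diagonal), so the diagonal factors may be moved to the left. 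Multiplying through by $q$ and simplifying the diagonal coefficients, the coefficient of $F^+$ becomes $(q-1)^{-1}(q^{\frac{n}{2}+1}K_1^{-1}K_2-q^{\frac{n-k}{2}+1}K_2)$, the coefficient of $F^-$ becomes $(q-1)^{-1}(q^{\frac{n}{2}+1}K_1K_2^{-1}-q^{\frac{k}{2}+1}K_1)$, and the purely diagonal term becomes $(q-1)^{-2}(q^{\frac{n}{2}+1}K_1K_2-q^{n-\frac{k}{2}+1}K_1-q^{\frac{n+k}{2}+1}K_2+q^{n+1}I)$. Rearranging yields (\ref{LR}).

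I expect the main obstacle to be pinning down the commutation relation $R_1R_2=qR_2R_1$ with the correct power of $q$ and the correct orientation; once that is in hand, the rest is routine algebra with diagonal matrices, together with the easy but necessary observation that $F^+$ and $F^-$ commute with $K_1$ and $K_2$.
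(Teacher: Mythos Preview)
Your proof is correct and follows essentially the same approach as the paper: both reduce the identity to $LR=q(L_2R_2)(R_1L_1)$ (equivalently $qR_1L_1L_2R_2$) via the commutation relations in the appendix, and then substitute the expressions (\ref{f+}),~(\ref{f-}) for $L_2R_2$ and $R_1L_1$. The only cosmetic differences are that the paper works backward (substitute into the right-hand side of (\ref{LR}) and simplify to $LR-qR_1L_1L_2R_2$, then show this vanishes) and uses Lemma~\ref{appendix2}(iii) rather than (i),(iv); your worry about $R_1R_2=qR_2R_1$ is unfounded, as this is exactly Lemma~\ref{appendix2}(iv).
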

\begin{proof}
    Evaluate the right-hand side of (\ref{LR}) using (\ref{f+}), (\ref{f-}); we routinely obtain
    \begin{equation}
    \label{LR-}
        LR-qR_1L_1L_2R_2.
    \end{equation}
    Evaluate (\ref{LR-}) using (\ref{rldef}) and Lemma \ref{appendix2}(iii) in the appendix. The result follows.
\end{proof}

\begin{theorem}
    We have
    \begin{equation}
    \label{RL}
        \begin{aligned}
            0=&\;qRL-FF^{0}-F^{+}F^{-}-(q-1)^{-1}\Bigl(\bigl(q^{\frac{k}{2}}K_1+q^{\frac{n-k}{2}}K_2-2I\bigr)F^{0}+\bigl(q^{\frac{k}{2}}K_1-I\bigr)F^{+}+\bigl(q^{\frac{n-k}{2}}K_2-I\bigr)F^{-}\Bigr)\\
            &\;\qquad \qquad-(q-1)^{-2}\bigl(q^{\frac{n}{2}}K_1K_2-q^{\frac{k}{2}}K_1-q^{\frac{n-k}{2}}K_2+I\bigr).
        \end{aligned}
    \end{equation}
\end{theorem}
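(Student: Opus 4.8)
The plan is to follow the strategy used to prove (\ref{LR}): rewrite the entire right-hand side of (\ref{RL}) in terms of the generators $L_1,L_2,R_1,R_2,K_1^{\pm1},K_2^{\pm1}$ of $\mathcal{H}$ and verify that it reduces to $0$. First, using $F=F^{0}+F^{+}+F^{-}$ from (\ref{fdef}) and the fact that $F^{0},F^{+},F^{-}$ mutually commute (Lemma \ref{commute}), replace $FF^{0}$ by $(F^{0})^{2}+F^{0}F^{+}+F^{0}F^{-}$, so that the right-hand side of (\ref{RL}) becomes a fixed linear combination of the four products $(F^{0})^{2}$, $F^{0}F^{+}$, $F^{0}F^{-}$, $F^{+}F^{-}$, the products of $F^{0},F^{+},F^{-}$ with $K_1,K_2$, the term $qRL$, and the diagonal matrices $K_1K_2$, $K_1$, $K_2$, $I$.

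Next, I would substitute the generator-level expressions of Lemma \ref{f0}: (\ref{f+}) for $F^{+}$, (\ref{f-}) for $F^{-}$, and for $F^{0}$ whichever of the two expressions (\ref{f01}), (\ref{f02}) is convenient in a given product. Concretely I expect to use (\ref{f01}) for the left factor and (\ref{f02}) for the right factor of $(F^{0})^{2}$, the form (\ref{f01}) in $F^{0}F^{+}$, and the form (\ref{f02}) in $F^{0}F^{-}$; and I would rewrite $qRL$, using $R=L_1R_2$, $L=R_1L_2$ from (\ref{rldef}) together with $R_2R_1=q^{-1}R_1R_2$, as $L_1R_1R_2L_2$. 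After expanding, each of the four products $F^{?}F^{?}$ decomposes into a ``leading part'', which is a sum of products of four of the generators $L_1,L_2,R_1,R_2$, plus ``cross terms'', in which a diagonal matrix multiplies one of the two-generator products $L_1R_1,R_1L_1,L_2R_2,R_2L_2$, plus purely diagonal terms.

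The verification then splits into two halves. In the first half one shows the leading four-generator parts cancel. Using only the relations among $L_1,L_2,R_1,R_2$ collected in Lemma \ref{appendix2} --- in particular the cross-family commutations $L_1R_2=R_2L_1$, $L_2R_1=R_1L_2$ and the $q$-twisted relations $L_2L_1=qL_1L_2$, $R_1R_2=qR_2R_1$ --- every four-generator product can be brought to a common normal form up to a power of $q$; with the choices of (\ref{f01}) versus (\ref{f02}) indicated above, $qRL=L_1R_1R_2L_2$ coincides with one of the four-generator terms of $(F^{0})^{2}$, and the remaining four-generator contributions of $(F^{0})^{2},F^{0}F^{+},F^{0}F^{-},F^{+}F^{-}$ cancel among themselves. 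In the second half one pushes each diagonal factor appearing in the cross terms to one side past the adjacent two-generator product, using the scaling relations between $K_1^{\pm1},K_2^{\pm1}$ and $L_1,L_2,R_1,R_2$ from Lemma \ref{appendix1}, and then re-expresses $L_1R_1,R_1L_1,L_2R_2,R_2L_2$ back in terms of $F^{0},F^{+},F^{-}$ and diagonal matrices by inverting (\ref{f01})--(\ref{f-}). Since $F^{0},F^{+},F^{-}$ act within each $P_{i,j}$ (immediate from Definition \ref{ftildedef} and Lemma \ref{coverlem}), they commute with $K_1$ and $K_2$, so the outcome is an unambiguous linear combination of $F^{0},F^{+},F^{-}$ with diagonal coefficients plus a purely diagonal remainder; matching this against the $-(q-1)^{-1}(\cdots)$ and $-(q-1)^{-2}(\cdots)$ terms already present in (\ref{RL}) gives $0$.

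I expect the only genuine obstacle to be the computational bookkeeping in the second half: keeping track of the many diagonal coefficients $q^{\pm k/2}K_1^{\pm1}$, $q^{\pm(n-k)/2}K_2^{\pm1}$ and of how they shift under conjugation by the generators, and choosing correctly between (\ref{f01}) and (\ref{f02}) in each product so that the first-half cancellation of four-generator terms is clean. As an independent sanity check, one can instead evaluate the $(u,v)$-entry of each side directly for $u,v\in P$ from Definition \ref{ftildedef}, counting the relevant covering chains via Lemma \ref{cover}; this is more tedious but bypasses the algebra in $\mathcal{H}$ entirely.
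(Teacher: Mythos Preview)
Your proposal is correct and follows essentially the same route as the paper: substitute (\ref{fdef}) and (\ref{f01})--(\ref{f-}) into the right-hand side of (\ref{RL}), reduce to $qRL-L_1R_1R_2L_2$, and then use (\ref{rldef}) together with $R_1R_2=qR_2R_1$ from Lemma \ref{appendix2}(iv) to see this vanishes. The paper compresses your two halves into the single phrase ``we routinely obtain $qRL-L_1R_1R_2L_2$''; your more explicit bookkeeping (separating four-generator, cross, and diagonal terms, and your observation that $F^{0},F^{+},F^{-}$ commute with $K_1,K_2$) is a sound way to carry out that routine computation.
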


\begin{proof}
    Evaluate the right-hand side of (\ref{RL}) using (\ref{fdef}), (\ref{f01})--(\ref{f-}); we routinely obtain
    \begin{equation}
    \label{RL=}
        qRL-L_1R_1R_2L_2.
    \end{equation}
    Evaluate (\ref{RL=}) using (\ref{rldef}) and Lemma \ref{appendix2}(iv) in the appendix. The result follows.
\end{proof}
    
\section{The Grassmann graph $J_q(n,k)$ and a two-vertex stabilizer $\Stab(x,y)$}
\label{grassmann}
In this section we define the Grassmann graph $J_q(n,k)$ from the projective geometry $P$. For distinct vertices $x,y$ of $J_q(n,k)$ we define a two-vertex stabilizer $\Stab(x,y)$ in $GL(V)$. We recall from \cite{Seong2} the orbits of the $\Stab(x,y)$-action on the local graph of $x$. We recall the corresponding structure constants.

Let $\Gamma=(X,E)$ denote a graph with no loops or multiple edges, vertex set $X$, edge set $E$, and path-length distance function $\partial$. By the \emph{diameter} $D$ of $\Gamma$, we mean the value $\max\{\partial(x,y)\mid x,y\in X\}$. For a vertex $x\in X$, let the set $\Gamma(x)$ consist of vertices in $X$ that are adjacent to $x$. By the \emph{local graph of $x$}, we mean the subgraph of $\Gamma$ induced on $\Gamma(x)$.

We say that $\Gamma$ is \textit{regular with valency $\kappa$} whenever $|\Gamma(x)|=\kappa$ for all $x\in X$. We say that $\Gamma$ is \emph{distance-regular} whenever for all $0\leq h,i,j\leq D$ and all $x,y\in X$ such that $\partial(x,y)=h$, the cardinality of the set $$\{z\in X\mid\partial(x,z)=i,\; \partial(y,z)=j\}$$ depends only on $h,i,j$. This cardinality is denoted by $p_{i,j}^{h}$. Define 
\begin{equation*}
        b_i=p^{i}_{1,i+1}\; \;  (0\leq i<D),\qquad \qquad  c_i=p^{i}_{1,i-1}\; \; (0<i\leq D), \qquad \qquad a_i=p^{i}_{1,i} \; \; (0\leq i\leq D).
\end{equation*}

We call $b_i,a_i,c_i$ the \emph{intersection numbers of $\Gamma$}.

If $\Gamma$ is distance-regular, then $\Gamma$ is regular with valency $\kappa=b_0$. From now on, we assume that $\Gamma$ is distance-regular.

We now define the Grassmann graph $J_q(n,k)$. Recall the projective geometry $P$ and its partition in (\ref{partition}). For $n>k\geq 1$, the vertex set of $J_q(n,k)$ is $X=P_{k}$. Two vertices $x,y\in X$ are adjacent whenever each of $x,y$ covers $x\cap y$. The graph $J_q(n,k)$ is known to be distance-regular \cite[Section~9.3]{BCN}. By \cite[p.~268]{BCN}, the graph $J_q(n,k)$ is isomorphic to $J_q(n,n-k)$. Without loss, we assume that $n\geq 2k$. The case $n=2k$ is somewhat special, so for the rest of the paper, we assume that $n>2k$. Under this assumption, the diameter of $J_q(n,k)$ is $k$. (See \cite[Theorem~9.3.3]{BCN}.) From now on, we assume that $\Gamma=J_q(n,k)$ with $k\geq 3$.

By \cite[Theorem~9.3.3]{BCN}, the intersection numbers of $\Gamma$ are
\begin{equation}
    \label{sizebc}
    b_i=q^{2i+1}[k-i][n-k-i],\qquad \qquad c_i=[i]^2\qquad \qquad (0\leq i\leq k).
\end{equation}

\begin{definition}
    Pick $y\in X$. For adjacent $w,z\in X$ such that $\partial(w,y)=\partial(z,y)$, we say that the edge $wz$ has
    \begin{enumerate}[label=(\roman*)]
        \item type $0$ whenever the subspace $w+z$ $/$-covers each of $w$ and $z$, which $\backslash$-covers $w\cap z$;
        \item type $+$ whenever the subspace $w+z$ $\backslash$-covers each of $w$ and $z$;
        \item type $-$ whenever each of the subspaces $w$ and $z$ $/$-covers $w\cap z$.
    \end{enumerate} 
\end{definition}

Recall the group $GL(V)$. By \cite[Lemma~3.9]{Seong}, the action of $GL(V)$ on $X$ preserves the path-length distance $\partial$.

\begin{definition}
    For distinct $x,y\in X$ let $\Stab(x,y)$ denote the subgroup of $GL(V)$ consisting of the elements that fix both $x$ and $y$. We call $\Stab(x,y)$ the \emph{stabilizer of $x$ and $y$}.
\end{definition}

Pick distinct $x,y\in X$. Our next goal is to describe the orbits of the $\Stab(x,y)$-action on $\Gamma(x)$. The cases $\partial(x,y)=1$ and $\partial(x,y)=k$ are somewhat special. For the rest of the paper, we assume that $1<\partial(x,y)<k$.

\begin{definition}
    For $x,y\in X$ such that $1<\partial(x,y)<k$, define
    \begin{align*}
        \mathcal{B}_{xy}&=\{z\in \Gamma(x)\mid \partial(z,y)=\partial(x,y)+1\},\\
        \mathcal{C}_{xy}&=\{z\in \Gamma(x)\mid \partial(z,y)=\partial(x,y)-1\},\\
        \mathcal{A}_{xy}&=\{z\in \Gamma(x)\mid \partial(z,y)=\partial(x,y)\}.
    \end{align*}
\end{definition}

\begin{definition}
    For $x,y\in X$ such that $1<\partial(x,y)<k$, define
    \begin{align*}
        \mathcal{A}^{0}_{xy}&=\{z\in \mathcal{A}_{xy}\mid \text{the edge $xz$ has type $0$}\},\\
        \mathcal{A}^{+}_{xy}&=\{z\in \mathcal{A}_{xy}\mid \text{the edge $xz$ has type $+$}\},\\
        \mathcal{A}^{-}_{xy}&=\{z\in \mathcal{A}_{xy}\mid \text{the edge $xz$ has type $-$}\}.
    \end{align*}
\end{definition}
\begin{lemma}{\rm{\cite[Lemma~6.7]{Seong2}}}
    For $x,y\in X$ such that $1<\partial(x,y)<k$, the set $\mathcal{A}_{xy}$ is the disjoint union of the sets $\mathcal{A}_{xy}^{0},\mathcal{A}_{xy}^{+},\mathcal{A}_{xy}^{-}$.
\end{lemma}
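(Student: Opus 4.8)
The plan is to prove the claim by a direct dimension-count argument. Pick $z\in\mathcal{A}_{xy}$, so $z\in\Gamma(x)$ with $\partial(z,y)=\partial(x,y)$. Since $z$ is adjacent to $x$, the subspace $z\cap x$ has dimension $k-1$ and $z+x$ has dimension $k+1$; moreover $z\cap x$ is covered by both $z$ and $x$, and each of $z,x$ is covered by $z+x$. Writing $x\in P_{i_0,j_0}$ (so $i_0+j_0=k$) and applying Lemma \ref{coverlem} to each of the four covering relations $z\cap x\lessdot z$, $z\cap x\lessdot x$, $z\lessdot z+x$, $x\lessdot z+x$, I get that each step is either a $/$-cover or a $\backslash$-cover. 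The first step is to observe that the two ``down'' steps from $z+x$ and the two ``up'' steps from $z\cap x$ must be consistent in the sense that going $z\cap x\to z\to z+x$ lands in the same $P_{r,s}$ as going $z\cap x\to x\to z+x$; this forces the multiset of slash/backslash labels on the $z$-side to equal that on the $x$-side.

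The key combinatorial step is then to enumerate the possibilities. Since $x\lessdot z+x$ is a single cover, it is either a $/$-cover or a $\backslash$-cover, and similarly $z\cap x\lessdot x$. This gives four a priori cases for the pair of covers on the $x$-side. For each, the requirement that the $z$-side covers produce the same endpoints $z\cap x\in P_{i,j}$ and $z+x\in P_{r,s}$ pins down the slash/backslash pattern on the $z$-side, and one checks that exactly three of the configurations are internally consistent and lead, respectively, to the defining conditions of $\mathcal{A}^0_{xy}$, $\mathcal{A}^+_{xy}$, $\mathcal{A}^-_{xy}$; the fourth (where, say, $z\cap x\lessdot x$ is a $/$-cover but $x\lessdot z+x$ is a $/$-cover while the $z$-side would need to be $\backslash$ then $\backslash$) is ruled out because it would force $\dim(x\cap y)$ to exceed $\dim((z+x)\cap y)$ or some analogous contradiction with $z\cap x\subseteq x\subseteq z+x$. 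Concretely: the three surviving patterns are (a) $/$ up to $x$, $\backslash$ up to $z+x$ [and the mirror on the $z$-side], giving $z+x$ $/$-covers $x$ and $z$, which $\backslash$-covers $z\cap x$ — this is $\mathcal{A}^0$; (b) $\backslash$ then $\backslash$ on both sides — this is $\mathcal{A}^+$; (c) $/$ then $/$ on both sides — this is $\mathcal{A}^-$. Disjointness is immediate since the three patterns are distinct as sequences of labels.

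For the ``same multiset on both sides'' claim I would argue as follows: $z\cap x$ and $z+x$ are fixed once $z$ is chosen, so their coordinates $(i,j)$ and $(r,s)$ in the $P_{\bullet,\bullet}$ grid are fixed; the path through $x$ changes $(i,j)\mapsto(r,s)$ in two unit steps, each step incrementing either the first or the second coordinate by Lemma \ref{coverlem}, so the two steps through $x$ increment the first coordinate a total of $r-i$ times and the second a total of $s-j$ times, and likewise for the two steps through $z$; since $(r-i)+(s-j)=2$ with each of $r-i,s-j\in\{0,1,2\}$, this already constrains things, and combined with the facts that $x\in P_{i_0,j_0}$ lies strictly between $z\cap x$ and $z+x$ in each coordinate, one sees the path through $x$ and the path through $z$ must realize the same pair of increments, i.e. the same label multiset. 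The main obstacle is organizing this case analysis cleanly — in particular verifying that the ``mixed'' configuration on a single side ($/$ then $\backslash$, or $\backslash$ then $/$, in the two steps from $z\cap x$ to $z+x$) is exactly what produces $\mathcal{A}^0$ and does so consistently, while the fourth global combination is genuinely impossible; but since each single cover has only two options and the endpoints are fixed, this is a finite check. Alternatively, one could simply cite \cite[Lemma~6.7]{Seong2} verbatim, as the statement is quoted from there; but the dimension-count above is the self-contained argument behind it.
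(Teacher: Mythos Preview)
The paper does not prove this lemma; it simply cites \cite[Lemma~6.7]{Seong2}. Your dimension-count approach is sound in outline, but the case analysis is scrambled. The key simplification you miss is that $\partial(z,y)=\partial(x,y)=i$ forces both $x$ and $z$ to lie in the \emph{same} cell $P_{k-i,i}$; hence the cover type of $z+x$ over $z$ is identical to that over $x$ (both are determined solely by the cell containing $z+x$), and likewise for $z\cap x$ below. There is no ``mirror'' and no multiset subtlety: the $z$-side and $x$-side carry the same labels in the same order. This leaves four cases for the pair (type of cover $z\cap x\subset x$, type of cover $x\subset z+x$), namely $(/,/)$, $(/,\backslash)$, $(\backslash,/)$, $(\backslash,\backslash)$. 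Your labels are then off: $(\backslash,/)$ is $\mathcal{A}^0$ (your (a) is internally inconsistent---``$\backslash$ up to $z+x$'' means $z+x$ $\backslash$-covers $x$, not $/$-covers as you write), $(/,/)$ is $\mathcal{A}^-$, $(\backslash,\backslash)$ is $\mathcal{A}^+$, and the impossible case is $(/,\backslash)$, not the ``$/$ then $/$'' you name as the fourth. The correct contradiction for $(/,\backslash)$: the $\backslash$-cover above gives $\dim\bigl((z+x)\cap y\bigr)=k-i$, hence $(z+x)\cap y=x\cap y=z\cap y$; then $x\cap y\subseteq z$, so $x\cap y\subseteq z\cap x$, forcing $\dim\bigl((z\cap x)\cap y\bigr)\geq k-i$, which contradicts the $/$-cover below.

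Your disjointness claim also needs repair. It is not ``immediate since the three patterns are distinct,'' because $\mathcal{A}^+$ and $\mathcal{A}^-$ are each defined by a \emph{single} cover condition (on the top, respectively bottom, of the diamond). A priori a vertex $z$ could satisfy both---precisely when the pattern is $(/,\backslash)$. Thus the disjointness of $\mathcal{A}^+$ and $\mathcal{A}^-$ \emph{is} the impossibility of $(/,\backslash)$; it does not follow merely from the three surviving configurations being distinct.
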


\begin{lemma}{\rm{\cite[Theorem~6.15]{Seong2}}}
\label{5orbitslem}
    For $x,y\in X$ such that $1<\partial(x,y)<k$, the following sets are orbits of the $\Stab(x,y)$-action on $\Gamma(x)$:
    \begin{equation}
    \label{5orbits}
        \mathcal{B}_{xy}, \qquad \qquad \mathcal{C}_{xy}, \qquad \qquad \mathcal{A}_{xy}^{0}, \qquad \qquad \mathcal{A}_{xy}^{+}, \qquad \qquad \mathcal{A}_{xy}^{-}.
    \end{equation}
    Furthermore, these orbits form a partition of $\Gamma(x)$.
\end{lemma}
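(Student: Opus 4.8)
The statement to be proven is Lemma~\ref{5orbitslem}, which asserts that the five sets in~(\ref{5orbits}) are orbits of the $\Stab(x,y)$-action on $\Gamma(x)$ and that they partition $\Gamma(x)$. Since the excerpt cites this as \cite[Theorem~6.15]{Seong2}, I would expect the proof here to either invoke that reference directly or to reconstruct the argument; I describe the reconstruction. The plan is first to establish that the five sets partition $\Gamma(x)$, and then to show each piece is a single orbit.

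For the partition claim, I would first note that $\Gamma(x) = \mathcal{B}_{xy} \cup \mathcal{C}_{xy} \cup \mathcal{A}_{xy}$ is a disjoint union, because for any $z \in \Gamma(x)$ the distance $\partial(z,y)$ differs from $\partial(x,y)$ by at most $1$ (adjacency of $x$ and $z$ plus the triangle inequality), and the three sets $\mathcal{B}_{xy}, \mathcal{C}_{xy}, \mathcal{A}_{xy}$ collect the three cases $+1$, $-1$, $0$. The further decomposition of $\mathcal{A}_{xy}$ into $\mathcal{A}_{xy}^0, \mathcal{A}_{xy}^+, \mathcal{A}_{xy}^-$ is exactly \cite[Lemma~6.7]{Seong2}, already quoted just above the statement, so I would simply cite it. Combining these gives the desired partition into five sets.

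For the orbit claim, the key points are $\Stab(x,y)$-invariance of each set and transitivity of the action on each set. Invariance is immediate: every $g \in \Stab(x,y)$ preserves dimensions, inclusion, the subspace $y$, and (by \cite[Lemma~3.9]{Seong}) the distance function $\partial$; hence it preserves each of the conditions defining $\mathcal{B}_{xy}, \mathcal{C}_{xy}$, and—since the $/$-cover and $\backslash$-cover relations are defined purely in terms of dimensions of intersections with $y$—it preserves the conditions defining $\mathcal{A}_{xy}^0, \mathcal{A}_{xy}^+, \mathcal{A}_{xy}^-$ as well. Transitivity is the substantive part: given $w, z$ in the same one of the five sets, one must produce $g \in GL(V)$ fixing $x$ and $y$ with $g(w) = z$. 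The natural approach is a Witt-type / change-of-basis argument: analyze the relative position of the quadruple of subspaces $(w, x, y, V)$, show that the isomorphism type of this configuration (captured by the dimensions of all relevant intersections, e.g. $\dim(w \cap x)$, $\dim(w \cap y)$, $\dim(w \cap x \cap y)$, $\dim((w+x) \cap y)$, etc.) is constant across each of the five sets, and then invoke the fact that $GL(V)$ acts transitively on flags/configurations of a fixed combinatorial type to build the required $g$. This is precisely the content of \cite[Theorem~6.15]{Seong2}.

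The main obstacle is the transitivity step—specifically, checking that the combinatorial invariants of the configuration $(w,x,y)$ really are determined by membership in one of the five named sets, and not just constrained by it. One must be careful that $1 < \partial(x,y) < k$ is used: this is what forces $\dim(x \cap y) = k - \partial(x,y)$ to lie strictly between $0$ and $k$, so that there is "room" for the various sub-cases to be realized and for the change-of-basis map to exist. Since all of this is already carried out in \cite{Seong2}, in the present paper I would state the lemma and give the one-line proof "See \cite[Lemma~6.7,~Theorem~6.15]{Seong2}," which is consistent with the citation style used for the neighboring lemmas in this section.
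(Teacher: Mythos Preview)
Your proposal is correct and matches the paper's approach: the paper gives no proof for this lemma at all, treating the citation \cite[Theorem~6.15]{Seong2} in the lemma header as the proof, exactly as you anticipated in your final paragraph. Your reconstruction sketch is sound and goes well beyond what the paper provides, but for the purposes of this paper the one-line reference is all that is needed.
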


Next we give the cardinality of each orbit in (\ref{5orbits}).

\begin{lemma}{\rm{\cite[Section~9]{Seong}}}
    Pick $x,y\in X$ such that $1<\partial(x,y)<k$. The cardinalities of the sets $\mathcal{B}_{xy}, \mathcal{C}_{xy}$ are
    \begin{equation*}
        \vert \mathcal{B}_{xy}\vert=b_i, \qquad \qquad \vert \mathcal{C}_{xy}\vert=c_i,
    \end{equation*}
    where $i=\partial(x,y)$ and $b_i,c_i$ are from (\ref{sizebc}).
\end{lemma}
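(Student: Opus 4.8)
The statement to prove is that $|\mathcal{B}_{xy}| = b_i$ and $|\mathcal{C}_{xy}| = c_i$ where $i = \partial(x,y)$. The plan is to reduce these counts to already-known distance-counting data for the distance-regular graph $\Gamma = J_q(n,k)$. Recall that $\mathcal{B}_{xy} = \{z \in \Gamma(x) \mid \partial(z,y) = i+1\}$ and $\mathcal{C}_{xy} = \{z \in \Gamma(x) \mid \partial(z,y) = i-1\}$. Since $z \in \Gamma(x)$ means $\partial(x,z) = 1$, we have
\begin{equation*}
    \mathcal{B}_{xy} = \{z \in X \mid \partial(x,z) = 1,\ \partial(y,z) = i+1\},\qquad
    \mathcal{C}_{xy} = \{z \in X \mid \partial(x,z) = 1,\ \partial(y,z) = i-1\}.
\end{equation*}
By the definition of the intersection numbers recalled in Section \ref{grassmann}, the cardinality of the first set is $p^{i}_{1,i+1} = b_i$ and that of the second is $p^{i}_{1,i-1} = c_i$, since $\partial(x,y) = i$ and $\Gamma$ is distance-regular. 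This is essentially immediate once the set-theoretic identifications above are made explicit.

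First I would record that $\partial(x,y) = i$ with $1 < i < k$, so that both $i+1 \le k$ and $i-1 \ge 1$, ensuring that the relevant intersection numbers $b_i$ and $c_i$ are defined (indeed $b_i$ is defined for $0 \le i < D = k$ and $c_i$ for $0 < i \le D$, both of which hold here). Next I would invoke distance-regularity of $\Gamma$: for $x,y$ with $\partial(x,y) = i$, the number $|\{z \in X \mid \partial(x,z) = 1,\ \partial(y,z) = j\}|$ equals $p^{i}_{1,j}$ and depends only on $i,j$. Applying this with $j = i+1$ and $j = i-1$ and using the abbreviations $b_i = p^{i}_{1,i+1}$, $c_i = p^{i}_{1,i-1}$ gives the two claimed equalities. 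Finally I would cite \cite[Theorem~9.3.3]{BCN} (already quoted as (\ref{sizebc})) to note that these intersection numbers have the explicit forms $b_i = q^{2i+1}[k-i][n-k-i]$ and $c_i = [i]^2$, which is presumably the content intended by the displayed formulas in the lemma.

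There is no real obstacle here: the only thing to be careful about is the bookkeeping that $\Gamma(x)$ in the local-graph sense coincides with the set of vertices at distance $1$ from $x$, and that the distance $\partial$ used to define $\mathcal{B}_{xy}, \mathcal{C}_{xy}$ is the same path-length distance function governing the intersection numbers — both of which are true by the definitions set up in Section \ref{grassmann}. The result is then a direct restatement of the definition of the intersection numbers of a distance-regular graph, which is why the cited source \cite[Section~9]{Seong} can simply be quoted.
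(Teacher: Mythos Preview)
Your proposal is correct: the identities $|\mathcal{B}_{xy}|=p^i_{1,i+1}=b_i$ and $|\mathcal{C}_{xy}|=p^i_{1,i-1}=c_i$ follow immediately from the definition of the intersection numbers of a distance-regular graph, exactly as you argue. The paper does not supply a proof of this lemma at all --- it simply cites \cite[Section~9]{Seong} --- so your direct argument from the definitions is both sound and the natural way to fill in the citation.
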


\begin{lemma}{\rm{\cite[Lemma~6.8]{Seong2}}}
\label{size}
    For $1<i<k$, let $a_i^{0},a_i^{+},a_i^{-}$ denote the cardinalities of  $\mathcal{A}^{0}_{xy}$, $\mathcal{A}^{+}_{xy}$, $\mathcal{A}^{-}_{xy}$ respectively, where $x,y\in X$ such that $\partial(x,y)=i$. Then
    \begin{equation*}
        a_i^{0}=(q-1)[i]^2, \qquad \qquad a_i^{+}=q^{i+1}[i][n-k-i], \qquad \qquad a_i^{-}=q^{i+1}[i][k-i].
    \end{equation*}
\end{lemma}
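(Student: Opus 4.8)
Proof proposal for Lemma \ref{size} (the final statement: the cardinalities $a_i^{0}=(q-1)[i]^2$, $a_i^{+}=q^{i+1}[i][n-k-i]$, $a_i^{-}=q^{i+1}[i][k-i]$).

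The plan is to compute each cardinality by a direct counting argument inside the projective geometry $P$, using the covering-relation combinatorics recorded in Lemma \ref{cover}. Fix $x,y\in X$ with $\partial(x,y)=i$, so $1<i<k$. A standard fact (which I would cite from \cite{Seong} or reprove in a line) is that for vertices of $J_q(n,k)$ one has $\partial(x,y)=k-\dim(x\cap y)$; hence $\dim(x\cap y)=k-i$, and in the notation $P_{r,s}$ we have $x\in P_{k-i,\,i}$. For $z\in\Gamma(x)$ we have $\dim(z\cap x)=k-1$ and $\dim(z+x)=k+1$; the three sets $\mathcal{A}^{0}_{xy},\mathcal{A}^{+}_{xy},\mathcal{A}^{-}_{xy}$ are distinguished by how the chain $z\cap x\subseteq z,x\subseteq z+x$ sits relative to $y$, i.e.\ by which of $\slash$-cover and $\backslash$-cover occurs at each step. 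The key preliminary observation is that $z\in\mathcal{A}_{xy}$ (i.e.\ $\partial(z,y)=\partial(x,y)=i$) forces $\dim(z\cap y)=k-i=\dim(x\cap y)$, so $z\in P_{k-i,\,i}$ as well; thus in all three cases both of the covering relations $(z+x)/z$ etc.\ and $(z+x)/x$ etc.\ must be "balanced" so that $z$ lands back in $P_{k-i,i}$. This pins down, for each of the three types, exactly which covering relation occurs at $z\cap x\lessdot z$, at $z\cap x\lessdot x$, at $z\lessdot z+x$, and at $x\lessdot z+x$.

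Next I would count each type by choosing $z$ along such a chain through $x$. For $\mathcal{A}^{-}_{xy}$: here each of $z,x$ $\slash$-covers $z\cap x$, and (by the balance observation) $z+x$ must $\backslash$-cover each of $z,x$. So $z\cap x$ is obtained from $x$ by removing a dimension "on the $y$-side" — equivalently, $z\cap x$ is $\slash$-covered by $x$, so by Lemma \ref{cover}(iii) applied with $x\in P_{k-i,i}$ there are $[k-i]$... wait, more precisely: the number of subspaces $w$ with $\dim w=k-1$ that are $\slash$-covered by $x$ equals the number of hyperplanes of $x$ lying in $P_{k-i-1,\,i}$; one computes this directly, or uses Lemma \ref{cover}(i) with $x\in P_{k-i,i}$ to get that $x$ $\slash$-covers $q^{i}[k-i]$ such $w$. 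Then, having fixed such a $w=z\cap x$, the vertex $z$ must satisfy $z\cap x=w$, $\dim z=k$, $z$ $\slash$-covers $w$, and $z+x$ $\backslash$-covers each of $z,x$. This is again a covering count in $P$, handled by Lemma \ref{cover}. Multiplying the two counts (and dividing by any overcount coming from the choice of the intermediate $z+x$, if I route through $z+x$ instead) gives $a_i^{-}$. An entirely parallel computation, interchanging the roles of the $y$-side and the complementary side (formally, using the $J_q(n,k)\cong J_q(n,n-k)$ duality that swaps $k\leftrightarrow n-k$ and $\slash\leftrightarrow\backslash$), yields $a_i^{+}=q^{i+1}[i][n-k-i]$ from $a_i^{-}=q^{i+1}[i][k-i]$. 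For $\mathcal{A}^{0}_{xy}$: here $z\cap x$ is $\backslash$-covered by each of $z,x$ and $z+x$ $\slash$-covers each of $z,x$; the count $q^{i-1}[i]$ of choices for $z\cap x$ (hyperplanes of $x$ that $x$ $\backslash$-covers, via Lemma \ref{cover}(ii) with $x\in P_{k-i,i}$, giving $[i]$ — let me recheck the exponent during write-up) times the number of completions $z$ gives, after simplification, $(q-1)[i]^2$.

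The main obstacle is bookkeeping: getting the $P_{i,j}$-coordinates of $x$, $z\cap x$, $z$, $z+x$ exactly right in each of the three cases, and making sure the product of "choose $z\cap x$" $\times$ "choose $z$ given $z\cap x$" counts each $z$ exactly once (no division needed, since $z\cap x$ is determined by $z$). A clean way to avoid double-routing is: always write $z\cap x\subseteq z$ with $z\cap x$ a hyperplane of $x$ of the correct type, then $z$ a $k$-subspace covering $z\cap x$ of the correct type and distinct from $x$ — and verify via Lemma \ref{coverlem}/Lemma \ref{cover} that the remaining condition (on $z+x$) is automatic given the types already fixed, so no further factor enters. I also need the elementary distance formula $\partial(x,y)=k-\dim(x\cap y)$ and the resulting constraint $z\in P_{k-i,i}$; both are available from \cite{Seong,Seong2} (indeed $\mathcal{A}_{xy}\subseteq P_{k-i,i}\cap\Gamma(x)$ is implicit in \cite[Lemma~6.7]{Seong2}). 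Once the coordinates are fixed, each factor is a single application of Lemma \ref{cover}, and the arithmetic $q^{i}[k-i]\cdot(\text{completion count})=q^{i+1}[i][k-i]$, etc., is routine; I would present it compactly rather than expanding every $[m]$.
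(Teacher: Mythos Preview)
The paper does not actually prove this lemma: it is quoted verbatim from \cite[Lemma~6.8]{Seong2} with no argument given here, so there is nothing in the present paper to compare your proposal against. Your direct-counting strategy via Lemma~\ref{cover} is a natural way to establish the result and is almost certainly close in spirit to the original proof in \cite{Seong2}.

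Your outline is correct for $\mathcal{A}^{-}_{xy}$ and $\mathcal{A}^{+}_{xy}$. For $\mathcal{A}^{-}_{xy}$: the number of $w\in P_{k-i-1,i}$ that $x$ $\slash$-covers is $q^{i}[k-i]$ by Lemma~\ref{cover}(i), and for each such $w$ the number of $z\neq x$ that $\slash$-cover $w$ is $[i+1]-1=q[i]$ by Lemma~\ref{cover}(iii), giving $q^{i+1}[i][k-i]$. The count for $\mathcal{A}^{+}_{xy}$ is the dual computation through $v=z+x$, using Lemma~\ref{cover}(iv) and (ii). One small correction: your claim that for $z\in\mathcal{A}^{-}_{xy}$ the sum $z+x$ \emph{must} $\backslash$-cover $x$ is false (both $\slash$- and $\backslash$-cover can occur), but this is harmless since the definition of $\mathcal{A}^{-}_{xy}$ imposes no condition on $z+x$.

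There is a genuine gap in your plan for $\mathcal{A}^{0}_{xy}$. You propose to choose $w=z\cap x\in P_{k-i,i-1}$ (there are $[i]$ such $w$ by Lemma~\ref{cover}(ii)) and then choose $z$ $\backslash$-covering $w$, asserting that the condition ``$z+x$ $\slash$-covers $x$'' is then automatic. It is not: a $z\in P_{k-i,i}$ that $\backslash$-covers $w$ can have $z+x\in P_{k-i,i+1}$ (which would put $z$ in $\mathcal{A}^{+}_{xy}$, not $\mathcal{A}^{0}_{xy}$). The missing observation is that $z+x$ $\slash$-covers $x$ if and only if $z\subseteq x+y$; counting $k$-dimensional $z$ with $w\subseteq z\subseteq x+y$ and $z\cap y=w\cap y$ in the quotient $(x+y)/w$ gives $[i+1]-[i]=q^{i}$ such $z$, and subtracting $z=x$ yields $q^{i}-1=(q-1)[i]$ completions per $w$, hence $a_i^{0}=[i]\cdot(q-1)[i]=(q-1)[i]^2$. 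So your approach succeeds, but only after this additional step.
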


By \cite[Definition~6.16]{Seong2}, the orbits in (\ref{5orbits}) form an equitable partition of $\Gamma(x)$ in the sense of \cite[p.~159]{Schwenk}. We present the corresponding structure constants.

\begin{lemma}{\rm{\cite[Theorem~9.1]{Seong2}}}
\label{structure}
For $x,y\in X$ such that $1<\partial(x,y)<k$, consider the orbits of the $\Stab(x,y)$-action on $\Gamma(x)$. Referring to the table below, for each orbit $\mathcal{O}$ in the header column, and each orbit $\mathcal{N}$ in the header row, the $(\mathcal{O},\mathcal{N})$-entry gives the number of vertices in $\mathcal{N}$ that are adjacent to a given vertex in $\mathcal{O}$. Write $i=\partial(x,y)$.

\begin{center}
\begin{tabular}{c|c c c c c}
     & $\mathcal{B}_{xy}$ & $\mathcal{C}_{xy}$ & $\mathcal{A}_{xy}^{0}$ & $\mathcal{A}_{xy}^{+}$ & $\mathcal{A}_{xy}^{-}$\\
    \hline \\
    $\mathcal{B}_{xy}$ & $\substack{q^{i+1}[k-i]\\+q^{i+1}[n-k-i]-q-1}$ & $0$ & $0$ & $q[i]$ & $q[i]$\\
   \\
    $\mathcal{C}_{xy}$ & $0$ & $2q[i-1]$ & $2q^i-q-1$ & $q^{i+1}[n-k-i]$ & $q^{i+1}[k-i]$\\
   \\
    $\mathcal{A}_{xy}^{0}$ & $0$ & $2[i]-1$ & $2q^i-q-2$ & $q^{i+1}[n-k-i]$ &$q^{i+1}[k-i]$\\
    \\
    $\mathcal{A}_{xy}^{+}$ & $q^{i+1}[k-i]$ & $[i]$ & $(q-1)[i]$ & $q[n-k]-q-1$ & $0$\\
    \\
    $\mathcal{A}_{xy}^{-}$ & $q^{i+1}[n-k-i]$ & $[i]$ & $(q-1)[i]$ & $0$ & $q[k]-q-1$\\ \\
\end{tabular}
\end{center}
\end{lemma}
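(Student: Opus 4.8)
The plan is to verify the table entry by entry, relying on three structural reductions before doing any counting. First, because $\mathcal{B}_{xy},\mathcal{C}_{xy},\mathcal{A}^{0}_{xy},\mathcal{A}^{+}_{xy},\mathcal{A}^{-}_{xy}$ are $\Stab(x,y)$-orbits (Lemma \ref{5orbitslem}), the number $p(\mathcal{O}\!\to\!\mathcal{N})$ of neighbours of $w$ lying in $\mathcal{N}$ is independent of the choice of $w\in\mathcal{O}$; so it suffices to fix one convenient representative $w$ in each orbit. Second, the isomorphism $J_q(n,k)\cong J_q(n,n-k)$ given by $u\mapsto u^{\perp}$ for a fixed nondegenerate bilinear form carries $x,y$ to $x^{\perp},y^{\perp}$, preserves adjacency and $\partial$, and (a short computation with $\dim(u^{\perp}\cap y^{\perp})=n-\dim(u+y)$ shows) interchanges $\slash$-covers with $\backslash$-covers; hence it fixes $\mathcal{B}_{xy},\mathcal{C}_{xy},\mathcal{A}^{0}_{xy}$ and interchanges $\mathcal{A}^{+}_{xy}\leftrightarrow\mathcal{A}^{-}_{xy}$, while replacing $k$ by $n-k$. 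This makes every $\mathcal{A}^{\pm}$-entry the $k\mapsto n-k$ image of its $\mathcal{A}^{\mp}$-counterpart, roughly halving the work. Third, since the partition is equitable the number of edges between $\mathcal{O}$ and $\mathcal{N}$ is $|\mathcal{O}|\,p(\mathcal{O}\!\to\!\mathcal{N})=|\mathcal{N}|\,p(\mathcal{N}\!\to\!\mathcal{O})$; together with $|\mathcal{B}_{xy}|=b_i$, $|\mathcal{C}_{xy}|=c_i$ from (\ref{sizebc}) and Lemma \ref{size}, this recovers each off-diagonal entry from its transpose.

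Next I would set up coordinates adapted to $x$ and $y$. With $i=\partial(x,y)$ one has $\dim(x\cap y)=k-i$, and since $n>2k$ there is room to pick a basis $e_1,\dots,e_{k-i},f_1,\dots,f_i,g_1,\dots,g_i,h_1,\dots,h_{n-k-i}$ of $V$ with $x\cap y=\langle e_\bullet\rangle$, $y=\langle e_\bullet,f_\bullet\rangle$, $x=\langle e_\bullet,g_\bullet\rangle$. A neighbour $z$ of $x$ is $z=(z\cap x)+\langle v\rangle$ for a hyperplane $z\cap x$ of $x$ and $v\notin x$, and its orbit is read off from $\dim(z\cap y)$ together with the $\slash/\backslash$-types of the coverings in the diamond $z\cap x\subseteq z,x\subseteq z+x$. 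Using that $v$ $\slash$-covers $u$ iff $\dim(v\cap y)=\dim(u\cap y)+1$ (Lemma \ref{coverlem}), one checks that the diamond has exactly three admissible shapes, matching the three orbits: for $z\in\mathcal{A}^{0}_{xy}$, $z\cap x\in P_{k-i,i-1}$ and $z+x\in P_{k-i+1,i}$; for $z\in\mathcal{A}^{+}_{xy}$, $z\cap x\in P_{k-i,i-1}$, $z+x\in P_{k-i,i+1}$ and $z\cap y=x\cap y$; for $z\in\mathcal{A}^{-}_{xy}$, $z\cap x\in P_{k-i-1,i}$ and $z+x\in P_{k-i+1,i}$. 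Then, fixing a representative $w$ in each orbit $\mathcal{O}$ (built from suitable members of the basis above), I would enumerate the hyperplanes $z\cap w$ of $w$ and the directions $v\notin w$ subject to: (i) $z\sim x$, (ii) $z\neq x$, (iii) $z$ of the prescribed orbit type $\mathcal{N}$, counting them with the $q$-binomial counts of Lemma \ref{cover} and the elementary counts of subspaces of a space meeting/avoiding a fixed subspace. The zero entries $\mathcal{B}\!\to\!\mathcal{C}$, $\mathcal{B}\!\to\!\mathcal{A}^{0}$, $\mathcal{C}\!\to\!\mathcal{B}$, $\mathcal{A}^{0}\!\to\!\mathcal{B}$ are immediate from $|\partial(w,y)-\partial(z,y)|\le 1$, and $\mathcal{A}^{+}\!\to\!\mathcal{A}^{-}$ (hence $\mathcal{A}^{-}\!\to\!\mathcal{A}^{+}$) is zero by a short diamond argument showing the two covering patterns cannot be shared across an edge with $\dim(w\cap y)=\dim(z\cap y)$.

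Finally, the remaining "diagonal-type" entries ($\mathcal{B}\!\to\!\mathcal{B}$, $\mathcal{C}\!\to\!\mathcal{C}$, $\mathcal{A}^{0}\!\to\!\mathcal{A}^{0}$, $\mathcal{A}^{\pm}\!\to\!\mathcal{A}^{\pm}$) together with one new entry per row (e.g.\ $\mathcal{C}\!\to\!\mathcal{A}^{0}$, $\mathcal{B}\!\to\!\mathcal{A}^{+}$, $\mathcal{A}^{0}\!\to\!\mathcal{A}^{+}$, $\mathcal{A}^{+}\!\to\!\mathcal{A}^{0}$, $\mathcal{A}^{+}\!\to\!\mathcal{C}$) are obtained by the direct count above; all other entries then follow from the $k\leftrightarrow n-k$ duality and the edge-count consistency relations. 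Every row of the completed table can be cross-checked against the identity $\sum_{\mathcal{N}}p(\mathcal{O}\!\to\!\mathcal{N})=a_1=b_0-b_1-c_1$ (the degree of the local graph), which catches arithmetic slips. I expect the main obstacle to be the internal combinatorics of $\mathcal{A}_{xy}$: distinguishing $\mathcal{A}^{0},\mathcal{A}^{+},\mathcal{A}^{-}$ forces a careful bookkeeping of the diamond $z\cap x\subseteq z,x\subseteq z+x$ together with the four subspaces $y\cap(z\cap x),\,y\cap z,\,y\cap x,\,y\cap(z+x)$, and for a fixed $w\in\mathcal{A}^{\varepsilon}_{xy}$ a neighbour $z$ can migrate between the $\mathcal{A}$-types in several ways; parametrizing the hyperplanes and extension directions so as to respect simultaneously the $x$-side and the $w$-side constraints, without double-counting, is where the bulk of the work lies. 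The $\mathcal{B}$- and $\mathcal{C}$-rows are comparatively routine once the coordinates are in place.
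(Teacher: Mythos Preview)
The paper does not prove Lemma~\ref{structure}; it is quoted verbatim from \cite[Theorem~9.1]{Seong2} and used as input for the rest of the argument. So there is no ``paper's own proof'' to compare against here, and your proposal is effectively an independent attempt to re-establish that cited result. As such it is a reasonable plan: the orbit argument for well-definedness, the $k\leftrightarrow n-k$ duality swapping $\mathcal{A}^{+}$ and $\mathcal{A}^{-}$, and the double-counting relation $|\mathcal{O}|\,p(\mathcal{O}\!\to\!\mathcal{N})=|\mathcal{N}|\,p(\mathcal{N}\!\to\!\mathcal{O})$ are exactly the right structural shortcuts, and your coordinate setup and diamond analysis of the three $\mathcal{A}$-types are correct.

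One genuine slip to flag: you say the zeros $\mathcal{B}\!\to\!\mathcal{A}^{0}$ and $\mathcal{A}^{0}\!\to\!\mathcal{B}$ are ``immediate from $|\partial(w,y)-\partial(z,y)|\le 1$'', but for these pairs the distance difference is exactly $1$, so the triangle inequality does not forbid adjacency. These zeros are real but require an argument: for $w\in\mathcal{B}_{xy}$ one has $w\cap x\in P_{k-i-1,i}$ and $w+x\in P_{k-i,i+1}$, whereas for $z\in\mathcal{A}^{0}_{xy}$ one has $z\cap x\in P_{k-i,i-1}$ and $z+x\in P_{k-i+1,i}$; in particular $w\cap x\neq z\cap x$ and $w+x\neq z+x$, and a short computation with $w\cap z$ and $w+z$ (using that both $w,z$ are sandwiched between a hyperplane of $x$ and a hyperplane-complement of $x$) shows $\dim(w\cap z)\le k-2$. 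This is precisely the kind of ``simultaneous $x$-side and $w$-side bookkeeping'' you anticipate for the $\mathcal{A}$-rows, and it already appears here. Apart from this, your outline would go through; the $\mathcal{A}^{+}\!\to\!\mathcal{A}^{-}=0$ claim also needs the diamond argument you allude to (it is essentially Lemma~\ref{F+F-} of the present paper, proved there by matrix methods rather than coordinates).
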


\section{Counting edges of different types in $\Gamma(x)$}
\label{types}

Pick a pair of orbits $\mathcal{O}, \mathcal{N}$ from (\ref{5orbits}). Pick $w\in \mathcal{O}$. Our next goal is to find the number of vertices $z\in \mathcal{N}$ adjacent to $w$ such that the edge $wz$ has (i) type $0$, (ii) type $+$, (iii) type $-$. In this section, we assume that $\mathcal{O}\in \{\mathcal{B}_{xy}, \mathcal{C}_{xy}\}$. The other orbits will be considered in Section \ref{typeII}. 

We first make some observations on the matrices $R,L,F,F^{0},F^{+},F^{-}$ from Definitions \ref{rlDef}--\ref{fadef}.

\begin{remark}
\label{remark2}
    Pick $y\in X$. For a pair of adjacent vertices $w,z\in X$, observe that the $(w,z)$-entries of $R$, $L$, $F$, $F^{0}$, $F^{+}$, $F^{-}$ are
    \begin{align*}
        &R_{w,z}=\begin{cases}
            1&\text{if $\partial(w,y)=\partial(z,y)+1$,}\\
            0&\text{otherwise;}
        \end{cases} &\bigl(F^{0}\bigr)_{w,z}=\begin{cases}
            1&\text{if the edge $wz$ has type $0$,}\\
            0&\text{otherwise;}
        \end{cases}\\
        &L_{w,z}=\begin{cases}
            1&\text{if $\partial(w,y)=\partial(z,y)-1$,}\\
            0&\text{otherwise;}
        \end{cases}
        &\bigl(F^{+}\bigr)_{w,z}=\begin{cases}
            1&\text{if the edge $wz$ has type $+$,}\\
            0&\text{otherwise;}
        \end{cases}\\
        &F_{w,z}=\begin{cases}
            1&\text{if $\partial(w,y)=\partial(z,y)$}\\
            0&\text{otherwise;}
            \end{cases}
            &\bigl(F^{-}\bigr)_{w,z}=\begin{cases}
            1&\text{if the edge $wz$ has type $-$,}\\
            0&\text{otherwise.}
        \end{cases}
    \end{align*}
\end{remark}

In what follows, we use the following notation. For sets $\mathcal{R}\subseteq \mathcal{S}$, denote by $\mathcal{S}\setminus \mathcal{R}$ the complement of $\mathcal{R}$ in $\mathcal{S}$. 

\begin{lemma}
    Pick $x,y\in X$ such that $1<\partial(x,y)<k$. For $w\in \mathcal{B}_{xy}$ and $z\in \Gamma(x)\setminus \mathcal{B}_{xy}$ that are adjacent, the edge $wz$ does not have type $0$, $+$, nor $-$.
\end{lemma}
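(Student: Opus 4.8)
The plan is to argue directly from the definitions of the edge types in Definition \ref{type} together with the distance conditions defining the orbits $\mathcal{B}_{xy}$ and $\Gamma(x)\setminus\mathcal{B}_{xy}$. Fix $x,y\in X$ with $1<\partial(x,y)<k$, and write $i=\partial(x,y)$. Suppose $w\in\mathcal{B}_{xy}$ and $z\in\Gamma(x)\setminus\mathcal{B}_{xy}$ are adjacent. By definition $\partial(w,y)=i+1$, whereas $z\in\Gamma(x)$ forces $\partial(z,y)\in\{i-1,i,i+1\}$, and $z\notin\mathcal{B}_{xy}$ rules out $\partial(z,y)=i+1$, so $\partial(z,y)\in\{i-1,i\}$. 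In either case $\partial(w,y)\neq\partial(z,y)$.

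The key observation is that each of the three edge types in Definition \ref{type} is only defined for adjacent $w,z$ with $\partial(w,y)=\partial(z,y)$; more to the point, I would show that whenever $wz$ has one of the three types, one necessarily has $\partial(w,y)=\partial(z,y)$. For this I would translate the type conditions into the $P_{r,s}$-coordinate language of Lemma \ref{coverlem} and Definition \ref{slashcover}: writing $w\in P_{a,b}$ and $z\in P_{c,d}$ (both in $P_k$, so $a+b=c+d=k$), the hypothesis that $w,z$ are adjacent means $w\cap z\in P_{k-1}$ and $w+z\in P_{k+1}$. In the type $0$ case, $w+z$ $/$-covers $w$ gives that $w+z$ has $y$-intersection dimension $a+1$, and $w+z$ $/$-covers $z$ gives $y$-intersection dimension $c+1$, hence $a=c$; then $\dim(w\cap y)=\dim(z\cap y)$, and since $w,z\in P_k$ this forces the "$j$-coordinates" to agree as well, i.e. $w,z\in P_{a,k-a}$, so $\partial(w,y)=\partial(z,y)$. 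An entirely analogous bookkeeping handles type $+$ (where $w+z$ $\backslash$-covers both $w$ and $z$) and type $-$ (where both $w,z$ $/$-cover $w\cap z$): in each case the two covering conditions pin down $\dim(w\cap y)=\dim(z\cap y)$, hence $\partial(w,y)=\partial(z,y)$. This is the standard fact that $\partial(u,y)$ for $u\in P_k$ is determined by $\dim(u\cap y)$ via $\partial(u,y)=k-\dim(u\cap y)$, which is used throughout \cite{Seong2,Seong3}.

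Combining the two paragraphs: if the edge $wz$ had type $0$, $+$, or $-$, we would get $\partial(w,y)=\partial(z,y)$, contradicting $\partial(w,y)=i+1\neq\partial(z,y)$ established above. Therefore $wz$ has none of the three types. Alternatively, and more economically, I would invoke Remark \ref{remark2}: the $(w,z)$-entries of $F^{0},F^{+},F^{-}$ are $0$ unless $\partial(w,y)=\partial(z,y)$, and here $\partial(w,y)=i+1$ while $\partial(z,y)\le i$, so all three entries vanish, which by Remark \ref{remark2} says precisely that $wz$ has no type.

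The only mild obstacle is making sure the coordinate translation of the type conditions is airtight — in particular that "$w+z$ $/$-covers $w$" really does force $\dim((w+z)\cap y)=\dim(w\cap y)+1$ and not merely $\dim(w+z)=\dim w+1$; this is exactly the content of Lemma \ref{coverlem} and Definition \ref{slashcover}, so no genuine difficulty arises. If one prefers the slicker route through Remark \ref{remark2}, the entire proof reduces to the elementary distance count in the first paragraph, and the obstacle disappears altogether.
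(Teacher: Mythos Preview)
Your proposal is correct and follows essentially the same approach as the paper: establish that $\partial(w,y)=i+1$ while $\partial(z,y)\leq i$, hence the distances differ, and conclude. The paper's proof is two lines and simply invokes the fact that Definition~\ref{type} only assigns a type to edges $wz$ with $\partial(w,y)=\partial(z,y)$; your coordinate bookkeeping in the second paragraph (and the alternative via Remark~\ref{remark2}) is correct but more than the paper bothers with, since the definitional restriction already suffices.
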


\begin{proof}
    Observe that $\mathcal{B}_{xy}$ is the only orbit where its vertex $w$ satisfies $\partial(w,y)=\partial(x,y)+1$. Therefore, $\partial(w,y)\neq\partial(z,y)$. The result follows.
\end{proof}

\begin{theorem}
\label{I1}
    Pick $x,y\in X$ such that $1<\partial(x,y)<k$. Pick $w\in \mathcal{B}_{xy}$. In the table below, we display in the right column the number of vertices $z\in \mathcal{B}_{xy}$ adjacent to $w$ such that the edge $wz$ has the type given in the left column. Write $i=\partial(x,y)$.
    
    \begin{center}
    \begin{tabular}{c|c}
        Type & Number of vertices in $\mathcal{B}_{xy}$\\
        \hline \\
        $0$ & $2q^{i+1}-q-1$\\ \\ 
        $+$ & $q^{i+2}[n-k-i-1]$\\ \\
        $-$ & $q^{i+2}[k-i-1]$
    \end{tabular}
    \end{center}
\end{theorem}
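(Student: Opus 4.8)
The strategy is to translate the counting problem into entry computations for matrices in $\mathcal{H}$, and then to extract those entries from the polynomial relations already available. Pick $w\in \mathcal{B}_{xy}$. For each $z\in \mathcal{B}_{xy}$ adjacent to $w$ we have $\partial(w,y)=\partial(z,y)=i+1$, so by Remark \ref{remark} and Remark \ref{remark2} the number of $z\in\mathcal{B}_{xy}$ with edge $wz$ of type $0$ (resp.\ $+$, $-$) equals $\sum_{z\in\mathcal{B}_{xy}}\bigl(F^{0}\bigr)_{w,z}$ (resp.\ with $F^{+}$, $F^{-}$). The key observation is that membership in $\mathcal{B}_{xy}$ is detected by the matrix $R$: for $z\in\Gamma(x)$ we have $R_{z,x}=1$ iff $z\in\mathcal{B}_{xy}$, and $R_{z,x}=0$ otherwise. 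Hence each desired count is exactly the $(w,x)$-entry of the product $F^{0}R$, $F^{+}R$, or $F^{-}R$ respectively, where $w\in\mathcal{B}_{xy}$. (One should note that off-diagonal contributions from $z\notin\Gamma(x)$ cannot occur: if $\bigl(F^{0}\bigr)_{w,z}\neq 0$ then $w,z$ are adjacent and $\partial(w,y)=\partial(z,y)$, and since $w\in\mathcal{B}_{xy}$ this forces $z$ adjacent to $x$ as well — this is the content of the preceding lemma run in reverse, and should be spelled out or cited from \cite{Watanabe}.)

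So the plan reduces to computing the $(w,x)$-entries of $F^{0}R$, $F^{+}R$, $F^{-}R$ for $w\in\mathcal{B}_{xy}$. For these I would use the relations \eqref{RF0}, \eqref{RF+1}, \eqref{RF-1} from Section \ref{somerelations}, rewritten to isolate $F^{0}R$, $F^{+}R$, $F^{-}R$:
\begin{align*}
    F^{0}R&=q^{2}RF^{0}+\Bigl(q^{\frac{k}{2}}K_1+q^{\frac{n-k}{2}}K_2-(q+1)I\Bigr)R,\\
    F^{+}R&=qRF^{+}-F^{0}R+(q-1)^{-1}\Bigl(q^{\frac{n}{2}+1}K_1K_2^{-1}-q^{\frac{k}{2}}K_1-q^{\frac{n-k}{2}}K_2+I\Bigr)R,\\
    F^{-}R&=qRF^{-}-F^{0}R+(q-1)^{-1}\Bigl(q^{\frac{n}{2}+1}K_1^{-1}K_2-q^{\frac{k}{2}}K_1-q^{\frac{n-k}{2}}K_2+I\Bigr)R.
\end{align*}
Taking the $(w,x)$-entry of each side: the diagonal matrices $K_1,K_2,I$ act by scalars determined by which $P_{r,s}$ contains $w$, and since $w\in\mathcal{B}_{xy}$ with $\partial(x,y)=i$ one has $w\in P_{i,i+1}$ (here I would invoke the description of $\mathcal{B}_{xy}$ in terms of $/$- and $\backslash$-covers together with Lemma \ref{cover}-type bookkeeping, or cite it from \cite{Seong2}), so $\bigl(K_1\bigr)_{w,w}=q^{k/2-i}$ and $\bigl(K_2\bigr)_{w,w}=q^{(i+1)-(n-k)/2}$. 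The term $\bigl(RF^{0}\bigr)_{w,x}=\sum_{v}R_{w,v}\bigl(F^{0}\bigr)_{v,x}$ vanishes because $\bigl(F^{0}\bigr)_{v,x}\neq 0$ forces $v$ adjacent to $x$ with $\partial(v,y)=\partial(x,y)=i$, while $R_{w,v}\neq 0$ forces $\partial(w,y)=\partial(v,y)+1=i+1$, consistent, but then $v\in\mathcal{A}^{0}_{xy}$ and $w\in\mathcal{B}_{xy}$ are not adjacent — more carefully, one checks that no $v$ can simultaneously satisfy both conditions and have $R_{w,v}=1$; similarly $\bigl(RF^{+}\bigr)_{w,x}$ and $\bigl(RF^{-}\bigr)_{w,x}$ vanish. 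Finally $R_{w,x}=1$. Substituting these values and simplifying $q^{k/2}\cdot q^{k/2-i}=q^{k-i}$, $q^{(n-k)/2}\cdot q^{i+1-(n-k)/2}=q^{i+1}$, $q^{n/2+1}\cdot q^{k/2-i}\cdot q^{(n-k)/2-i-1}=q^{n-2i}$, $q^{n/2+1}\cdot q^{i-k/2}\cdot q^{i+1-(n-k)/2}=q^{k+2}\cdot q^{2i-n}$ — wait, I should double-check these exponent computations carefully — one reads off
\begin{align*}
    \bigl(F^{0}R\bigr)_{w,x}&=q^{k-i}+q^{i+1}-q-1,\\
    \bigl(F^{+}R\bigr)_{w,x}&=(q-1)^{-1}\bigl(q^{n-2i-1+1}-\cdots\bigr)-\bigl(F^{0}R\bigr)_{w,x}+\cdots,
\end{align*}
and after collecting terms these should match $2q^{i+1}-q-1$, $q^{i+2}[n-k-i-1]$, $q^{i+2}[k-i-1]$ respectively; note $2q^{i+1}-q-1$ rather than $q^{k-i}+q^{i+1}-q-1$ suggests I have mislabeled the $P_{r,s}$ containing $w$, so the first real task is to pin down $w\in P_{i+1,i}$ versus $P_{i,i+1}$ — that is the detail to get right.

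\textbf{Main obstacle.} The genuinely delicate points are (a) correctly identifying which block $P_{r,s}$ contains a vertex $w\in\mathcal{B}_{xy}$ — this is what makes the diagonal matrices $K_1,K_2$ evaluate to the right powers of $q$, and getting it wrong shifts every exponent — and (b) justifying that the ``cross terms'' $\bigl(RF^{0}\bigr)_{w,x}$, $\bigl(RF^{+}\bigr)_{w,x}$, $\bigl(RF^{-}\bigr)_{w,x}$ vanish, which requires a small but careful distance/covering argument about which intermediate vertices $v$ can contribute; this is essentially the same bookkeeping used to prove the lemma immediately preceding the theorem. Once those are settled, the rest is the routine exponent arithmetic sketched above, reorganized using $[m]=(q^{m}-1)/(q-1)$. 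I would write the final simplification as three short displayed computations, one per type, each ending in the claimed table entry; as a consistency check, the three counts should sum to $q^{i+1}[k-i-1]q^{2}+\cdots$ — more usefully, to $a_{i+1}$-type totals, or simply to the total number of neighbors of $w$ inside $\mathcal{B}_{xy}$, which by Lemma \ref{structure} is $q^{i+1}[k-i]+q^{i+1}[n-k-i]-q-1$; verifying $(2q^{i+1}-q-1)+q^{i+2}[n-k-i-1]+q^{i+2}[k-i-1]$ equals this provides a clean self-check before concluding.
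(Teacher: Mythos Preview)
Your approach matches the paper's exactly: compute $\bigl(F^{0}R\bigr)_{w,x}$, $\bigl(F^{+}R\bigr)_{w,x}$, $\bigl(F^{-}R\bigr)_{w,x}$ via the relations \eqref{RF0}, \eqref{RF+1}, \eqref{RF-1}. Two points need correction.

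\textbf{The block containing $w$.} You suspected a misidentification, and you are right. Since $\partial(x,y)=i$ one has $\dim(x\cap y)=k-i$, so $x\in P_{k-i,\,i}$; for $w\in\mathcal{B}_{xy}$ we have $\partial(w,y)=i+1$, hence $\dim(w\cap y)=k-i-1$ and $w\in P_{k-i-1,\,i+1}$ (not $P_{i,\,i+1}$). With this, $q^{k/2}\bigl(K_1\bigr)_{w,w}=q^{i+1}$ and $q^{(n-k)/2}\bigl(K_2\bigr)_{w,w}=q^{i+1}$, and the type-$0$ computation gives the stated $2q^{i+1}-q-1$.

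\textbf{The cross terms $\bigl(RF^{+}\bigr)_{w,x}$ and $\bigl(RF^{-}\bigr)_{w,x}$ do not vanish.} Your argument for $\bigl(RF^{0}\bigr)_{w,x}=0$ is correct: it counts the $v\in\mathcal{A}^{0}_{xy}$ adjacent to $w$, and the $(\mathcal{B}_{xy},\mathcal{A}^{0}_{xy})$-entry of the table in Lemma~\ref{structure} is $0$. But the word ``similarly'' fails for $F^{+}$ and $F^{-}$: by the same reasoning $\bigl(RF^{+}\bigr)_{w,x}$ counts the $v\in\mathcal{A}^{+}_{xy}$ adjacent to $w$, and the $(\mathcal{B}_{xy},\mathcal{A}^{+}_{xy})$-entry of that table is $q[i]$, not $0$; likewise $\bigl(RF^{-}\bigr)_{w,x}=q[i]$. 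The paper uses exactly these values. With $\bigl(qRF^{+}\bigr)_{w,x}=q^{2}[i]$ and the diagonal term evaluating to $q^{i+1}[n-k-i]-[i+1]$, one obtains
\[
\bigl(F^{+}R\bigr)_{w,x}=q^{2}[i]-(2q^{i+1}-q-1)+q^{i+1}[n-k-i]-[i+1]=q^{i+2}[n-k-i-1],
\]
and symmetrically for $F^{-}$. If you set those cross terms to $0$ your answer for types $+$ and $-$ will each be off by $q^{2}[i]$, and your self-check against the $(\mathcal{B}_{xy},\mathcal{B}_{xy})$-entry of Lemma~\ref{structure} would fail.
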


\begin{proof}
    We first consider type $0$. We evaluate the $(w,x)$-entry of each term on the left-hand side of (\ref{RF0}).
    
    We claim that 
    \begin{equation}
    \label{RF0wx}
        \bigl(RF^{0}\bigr)_{w,x}=0.
    \end{equation}
    Observe that $\bigl(RF^{0}\bigr)_{w,x}$ counts the number of vertices $z\in \mathcal{A}^{0}_{xy}$ that are adjacent to $w$. Referring to the table given in Lemma \ref{structure}, the ($\mathcal{B}_{xy}, \mathcal{A}^{0}_{xy}$)-entry is equal to $0$. The result follows. 

    Next consider 
    \begin{equation}
    \label{w-entry}
        \Bigl(q^{\frac{k}{2}}K_1+q^{\frac{n-k}{2}}K_2-(q+1)I\Bigr)R.
    \end{equation}
    Note that $R_{w,x}=1$. By Definition \ref{kdef} and the fact that $w\in P_{k-i-1,i+1}$, the $(w,x)$-entry of (\ref{w-entry}) is equal to $2q^{i+1}-q-1$. By the above comments and (\ref{RF0}), the $(w,x)$-entry of $F^{0}R$ is
    \begin{equation}
        \label{F0Rwx}
        \bigl(F^{0}R\bigr)_{w,x}=2q^{i+1}-q-1.
    \end{equation}
    Observe that $\bigl(F^{0}R\bigr)_{w,x}$ counts the number of vertices $z\in \mathcal{B}_{xy}$ adjacent to $w$ such that the edge $wz$ has type $0$. The result follows.

    We have proved the result for type $0$. Next we consider type $+$. We evaluate the $(w,x)$-entry of each term on the right-hand side of (\ref{RF+1}). Observe that $\bigl(RF^{+}\bigr)_{w,x}$ counts the number of vertices $z\in \mathcal{A}^{+}_{xy}$ that are adjacent to $w$. Using the $(\mathcal{B}_{xy},\mathcal{A}^{+}_{xy})$-entry of the table in Lemma \ref{structure}, we get 
    \begin{equation*}
        \bigl(qRF^{+}\bigr)_{w,x}=q^2[i].
    \end{equation*}
    Next consider 
    \begin{equation}
    \label{w-entry2}
        (q-1)^{-1}\Bigl(q^{\frac{n}{2}+1}K_1K_2^{-1}-q^{\frac{k}{2}}K_1-q^{\frac{n-k}{2}}K_2+I\Bigr)R.
    \end{equation}
    Recall that $R_{w,x}=1$. By Definition \ref{kdef} and the fact that $w\in P_{k-i-1,i+1}$, the $(w,x)$-entry of (\ref{w-entry2}) is equal to 
    \begin{equation*}
        q^{i+1}[n-k-i]-[i+1].
    \end{equation*}
    By the above comments and (\ref{F0Rwx}), the $(w,x)$-entry of $F^{+}R$ is equal to  
    \begin{equation*}
        \bigl(F^+R\bigr)_{w,x}=q^{i+2}[n-k-i-1].
    \end{equation*}
    Observe that $\bigl(F^+R\bigr)_{w,x}$ counts the number of vertices $z\in \mathcal{B}_{xy}$ adjacent to $w$ such that the edge $wz$ has type $+$. The result follows.

    We have proved the result for type $+$. The proof for type $-$ is similar, and omitted.
    \end{proof}

    \begin{lemma}
    Pick $x,y\in X$ such that $1<\partial(x,y)<k$. For $w\in \mathcal{C}_{xy}$ and $z\in \Gamma(x)\setminus \mathcal{C}_{xy}$ that are adjacent, the edge $wz$ does not have type $0$, $+$, nor $-$.
\end{lemma}

\begin{proof}
    Observe that $\mathcal{C}_{xy}$ is the only orbit where its vertex $w$ satisfies $\partial(w,y)=\partial(x,y)-1$. Therefore,  $\partial(w,y)\neq \partial(z,y)$. The result follows.
\end{proof}

    \begin{theorem}
\label{I2}
    Pick $x,y\in X$ such that $1<\partial(x,y)<k$. Pick $w\in \mathcal{C}_{xy}$. In the table below, we display in the right column the number of vertices $z\in \mathcal{C}_{xy}$ adjacent to $w$ such that the edge $wz$ has the type given in the left column. Write $i=\partial(x,y)$.
    
    \begin{center}
    \begin{tabular}{c|c}
        Type & Number of vertices in $\mathcal{C}_{xy}$\\
        \hline \\
        $0$ & $0$\\ \\ 
        $+$ & $q[i-1]$\\ \\
        $-$ & $q[i-1]$
    \end{tabular}
    \end{center}
\end{theorem}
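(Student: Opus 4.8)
The plan is to mimic the proof of Theorem~\ref{I1}, but now using the relations (\ref{LF0}), (\ref{LF+1}), (\ref{LF-1}) that involve $L$ on the right rather than (\ref{RF0}), (\ref{RF+1}), (\ref{RF-1}) that involve $R$ on the right. The key point is that for $w\in \mathcal{C}_{xy}$ we have $\partial(w,y)=\partial(x,y)-1=i-1$, so $w\in P_{k-i+1,i-1}$, and $L_{w,x}=1$ since $\partial(w,y)=\partial(x,y)-1$; meanwhile $L_{z,x}=0$ for $z\notin \mathcal{C}_{xy}$ by the lemma immediately preceding the theorem, together with Remark~\ref{remark2}. Consequently, for each of $F^{0}L$, $F^{+}L$, $F^{-}L$, the $(w,x)$-entry counts exactly the number of vertices $z\in \mathcal{C}_{xy}$ adjacent to $w$ such that the edge $wz$ has type $0$, $+$, $-$ respectively (using Remark~\ref{remark2} to interpret the entries of $F^{0},F^{+},F^{-}$ as edge types, and noting $\partial(w,y)=\partial(z,y)$ for such $z$).

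First I would handle type $0$. Take the $(w,x)$-entry of each term of (\ref{LF0}). The entry $\bigl(LF^{0}\bigr)_{w,x}$ counts the number of $z\in \mathcal{A}^{0}_{xy}$ adjacent to $w$, which by the $(\mathcal{C}_{xy},\mathcal{A}^{0}_{xy})$-entry of the table in Lemma~\ref{structure} equals $2q^{i}-q-1$. Next, since $L_{w,x}=1$ and $w\in P_{k-i+1,i-1}$, Definition~\ref{kdef} gives $\bigl(K_1\bigr)_{w,w}=q^{\frac{k}{2}-(k-i+1)}=q^{i-1-\frac{k}{2}}$ and $\bigl(K_2\bigr)_{w,w}=q^{(i-1)-\frac{n-k}{2}}$, so the $(w,x)$-entry of $\bigl(q^{\frac{k}{2}+1}K_1+q^{\frac{n-k}{2}+1}K_2-(q+1)I\bigr)L$ is $q^{i}+q^{i}-(q+1)=2q^{i}-q-1$. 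Then (\ref{LF0}) yields $q^{2}\bigl(F^{0}L\bigr)_{w,x}=\bigl(LF^{0}\bigr)_{w,x}-(2q^{i}-q-1)=0$, so $\bigl(F^{0}L\bigr)_{w,x}=0$, giving the type-$0$ count.

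Next I would treat type $+$ using (\ref{LF+1}). I evaluate $q\bigl(F^{+}L\bigr)_{w,x}=\bigl(LF^{+}\bigr)_{w,x}+\bigl(LF^{0}\bigr)_{w,x}-\bigl(\text{the } K\text{-term}\bigr)_{w,x}$. Here $\bigl(LF^{+}\bigr)_{w,x}$ counts $z\in \mathcal{A}^{+}_{xy}$ adjacent to $w$, which is the $(\mathcal{C}_{xy},\mathcal{A}^{+}_{xy})$-entry $q^{i+1}[n-k-i]$ of Lemma~\ref{structure}; $\bigl(LF^{0}\bigr)_{w,x}=2q^{i}-q-1$ as above; and the $(w,x)$-entry of $(q-1)^{-1}\bigl(q^{\frac{n}{2}+1}K_1K_2^{-1}-q^{\frac{k}{2}+1}K_1-q^{\frac{n-k}{2}+1}K_2+I\bigr)L$ is computed from the diagonal entries just found, namely $(q-1)^{-1}\bigl(q^{\frac{n}{2}+1}\cdot q^{i-1-\frac{k}{2}}\cdot q^{-(i-1)+\frac{n-k}{2}}-q^{i}-q^{i}+1\bigr)=(q-1)^{-1}\bigl(q^{n-k+1}-2q^{i}+1\bigr)$. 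Collecting these should give $q\bigl(F^{+}L\bigr)_{w,x}=q^{2}[i-1]$, hence $\bigl(F^{+}L\bigr)_{w,x}=q[i-1]$. The type $-$ case is handled identically via (\ref{LF-1}), using instead the $(\mathcal{C}_{xy},\mathcal{A}^{-}_{xy})$-entry $q^{i+1}[k-i]$ and the diagonal value $\bigl(K_1^{-1}K_2\bigr)_{w,w}=q^{-(i-1)+\frac{k}{2}}\cdot q^{(i-1)-\frac{n-k}{2}}=q^{\frac{k}{2}-\frac{n-k}{2}}=q^{k-\frac{n}{2}}$, so $q^{\frac{n}{2}+1}K_1^{-1}K_2$ contributes $q^{k+1}$; by the symmetry $k\leftrightarrow n-k$ with everything else unchanged, one again obtains $\bigl(F^{-}L\bigr)_{w,x}=q[i-1]$, as claimed; I would write out this case only briefly or declare it ``similar, and omitted'' in the style of the paper. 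The only mild obstacle is bookkeeping: making sure the exponents in the $K_1,K_2$ diagonal entries are correct for $w\in P_{k-i+1,i-1}$ (in contrast to $w\in P_{k-i-1,i+1}$ in Theorem~\ref{I1}) and that the $q$-binomial identities collapse cleanly; there is no conceptual difficulty, since all the needed relations and structure constants are already in hand.
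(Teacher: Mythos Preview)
Your proposal is correct and follows essentially the same approach as the paper: both take the $(w,x)$-entry of each term in (\ref{LF0}), (\ref{LF+1}), (\ref{LF-1}), read off $\bigl(LF^{0}\bigr)_{w,x}$, $\bigl(LF^{+}\bigr)_{w,x}$, $\bigl(LF^{-}\bigr)_{w,x}$ from the $(\mathcal{C}_{xy},\cdot)$-row of Lemma~\ref{structure}, evaluate the diagonal $K_1,K_2$-terms using $w\in P_{k-i+1,i-1}$, and solve for $\bigl(F^{0}L\bigr)_{w,x}$, $\bigl(F^{+}L\bigr)_{w,x}$, $\bigl(F^{-}L\bigr)_{w,x}$. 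Your write-up is in fact slightly cleaner at one point: in the type-$+$ step you correctly feed in $\bigl(LF^{0}\bigr)_{w,x}=2q^{i}-q-1$, which is the term actually appearing in (\ref{LF+1}), whereas the paper cites (\ref{F0Lwx}) (the value of $\bigl(F^{0}L\bigr)_{w,x}$) at that spot.
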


\begin{proof}
    We first consider type $0$. We evaluate the $(w,x)$-entry of each term on the left-hand side of (\ref{LF0}).
    
    Observe that $\bigl(LF^{0}\bigr)_{w,x}$ counts the number of vertices $z\in \mathcal{A}^{0}_{xy}$ that are adjacent to $w$. Referring to the table given in Lemma \ref{structure}, the $(\mathcal{C}_{xy},\mathcal{A}^{0}_{xy})$-entry is equal to $2q^{i}-q-1$. Hence the $(w,x)$-entry of $LF^{0}$ is equal to
    \begin{equation*}
        \bigl(LF^{0}\bigr)_{w,x}=2q^i-q-1.
    \end{equation*}
    Next consider 
    \begin{equation}
    \label{w-entry3}
        \Bigl(q^{\frac{k}{2}+1}K_1+q^{\frac{n-k}{2}+1}K_2-(q+1)I\Bigr)L.
    \end{equation}
    Note that $L_{w,x}=1$. By Definition \ref{kdef} and the fact that $w\in P_{k-i+1,i-1}$, the $(w,x)$-entry of (\ref{w-entry3}) is equal to $2q^{i}-q-1$. By the above comments and (\ref{LF0}), we have
    \begin{equation}
        \label{F0Lwx}
        \bigl(F^{0}L\bigr)_{w,x}=0.
    \end{equation}
    Observe that $\bigl(F^{0}L\bigr)_{w,x}$ counts the number of vertices $z\in \mathcal{C}_{xy}$ adjacent to $w$ such that the edge $wz$ has type $0$. The result follows.

    We have proved the result for type $0$. Next we consider type $+$. We evaluate the $(w,x)$-entry of each term on the right-hand side of (\ref{LF+1}). Observe that $\bigl(LF^{+}\bigr)_{w,x}$ counts the number of vertices $z\in \mathcal{A}^{+}_{xy}$ that are adjacent to $w$. Using the $(\mathcal{C}_{xy},\mathcal{A}^{+}_{xy})$-entry of the table in Lemma \ref{structure}, we get 
    \begin{equation*}
        \bigl(LF^{+}\bigr)_{w,x}=q^{i+1}[n-k-i].
    \end{equation*}
    Next consider 
    \begin{equation}
    \label{w-entry4}
        (q-1)^{-1}\Bigl(q^{\frac{n}{2}+1}K_2^{-1}K_1-q^{\frac{k}{2}+1}K_1-q^{\frac{n-k}{2}+1}K_2+I\Bigr)L.
    \end{equation}
    Recall that $L_{w,x}=1$. By Definition \ref{kdef} and the fact that $w\in P_{k-i+1,i-1}$, the $(w,x)$-entry of (\ref{w-entry4}) is equal to 
    \begin{equation*}
        q^i[n-k-i+1]-[i].
    \end{equation*}
    By the above comments and (\ref{F0Lwx}), we have
    \begin{equation*}
        \bigl(F^+L\bigr)_{w,x}=q[i-1].
    \end{equation*}
    Observe that $\bigl(F^+L\bigr)_{w,x}$ counts the number of vertices $z\in \mathcal{C}_{xy}$ adjacent to $w$ such that the edge $wz$ has type $+$. The result follows.

    We have proved the result for type $+$. The proof for type $-$ is similar, and omitted.
\end{proof}

\section{Entries of matrices that involve $F^{0},F^{+},F^{-}$}
\label{entry}
Pick $x,y\in X$ such that $1<\partial(x,y)<k$. Pick $w\in \mathcal{A}_{xy}$. In this section we find the $(w,x)$-entries of many matrices that involve $F^{0},F^{+},F^{-}$.

\begin{lemma}
\label{F+F-}
    Pick $x,y\in X$ such that $1<\partial(x,y)<k$. For $w\in \mathcal{A}_{xy}$, the $(w,x)$-entry of $F^{+}F^{-}$ is equal to $0$.
\end{lemma}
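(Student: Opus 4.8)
The plan is to interpret the matrix entry $\bigl(F^{+}F^{-}\bigr)_{w,x}$ combinatorially and show that the index set it counts is empty. First I would write $\bigl(F^{+}F^{-}\bigr)_{w,x}=\sum_{z\in P}\bigl(F^{+}\bigr)_{w,z}\bigl(F^{-}\bigr)_{z,x}$. By Definition \ref{ftildedef}, a term is nonzero precisely when $w+z$ $\backslash$-covers each of $w$ and $z$ (so $\dim z=\dim w=k$ and $\dim(w+z)=k+1$, forcing $z\in\Gamma(w)$, hence $z\in\Gamma(x)$ up to the endpoint cases) and simultaneously each of $z$ and $x$ $\slash$-covers $z\cap x$ (so $\dim(z\cap x)=k-1$ and $z$ is adjacent to $x$). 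Thus any contributing $z$ is a common neighbor of $w$ and $x$ lying in $\Gamma(x)$, the edge $wz$ has type $+$, and the edge $zx$ (viewed from $x$) puts $z\in\mathcal{A}^{-}_{xy}$ by Remark \ref{remark} together with $\partial(z,y)=\partial(w,y)=\partial(x,y)$.

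Next I would extract the dimension/intersection constraints with $y$. Since $w\in\mathcal{A}_{xy}$ with $i=\partial(x,y)$, we have $w,x\in P_{k-i,i}$ and $z\in P_{k-i,i}$ as well (because $z\in\mathcal{A}_{xy}$). The condition $\bigl(F^{+}\bigr)_{w,z}=1$ says $w+z$ $\backslash$-covers $w$, i.e.\ passing from $w$ to $w+z$ increases $j$ but not $i$; by Lemma \ref{coverlem}/Definition \ref{slashcover} this means $\dim\bigl((w+z)\cap y\bigr)=k-i$, so $(w+z)\cap y=w\cap y=z\cap y$. On the other hand the condition $\bigl(F^{-}\bigr)_{z,x}=1$ says $z$ and $x$ both $\slash$-cover $z\cap x$, i.e.\ passing from $z\cap x$ up to $z$ (and up to $x$) increases $i$ but not $j$; this forces $\dim\bigl((z\cap x)\cap y\bigr)=k-i-1$, and hence $z\cap y$ and $x\cap y$ are two distinct $(k-i)$-dimensional subspaces of $y$ whose intersection is the $(k-i-1)$-dimensional space $(z\cap x)\cap y$.

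The punchline is to derive a contradiction from these two facts. From $(w+z)\cap y=z\cap y$ we get $w\cap y=z\cap y$; but then $z\cap y=w\cap y=x\cap y$ (all three vertices of $\mathcal{A}_{xy}$ share the same $i$-dimensional intersection with $y$? — here one must check carefully whether $w\cap y$ equals $x\cap y$, which follows because $w\in\mathcal{A}_{xy}$ and, more to the point, the type-$+$ condition pins $z\cap y$ to $w\cap y$), contradicting the conclusion of the previous paragraph that $z\cap y\neq x\cap y$. I expect the main obstacle to be exactly this bookkeeping: tracking which of the intersections-with-$y$ are forced to coincide and which are forced to differ, and making sure the endpoint subtleties ($z$ possibly equal to $x$ or $w$, or $\partial(z,y)$ landing outside the open range) are handled. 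A clean alternative, if the direct count gets unwieldy, is to invoke the algebraic identity: $F^{+}F^{-}$ appears in \eqref{intro11} and \eqref{intro12}, and one could instead evaluate the $(w,x)$-entry of $F^{+}F^{-}$ by rearranging \eqref{RL} and using the already-known $(w,x)$-entries of $RL$, $FF^{0}$, and the $K_i$-terms for $w\in\mathcal{A}_{xy}$; but since those downstream entries are presumably derived using this very lemma, the self-contained combinatorial argument above is the one I would commit to.
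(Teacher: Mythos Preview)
Your combinatorial approach has a real gap in the case $w\in\mathcal{A}^{-}_{xy}$. You correctly deduce $z\cap y=w\cap y$ from $\bigl(F^{+}\bigr)_{w,z}=1$ and $z\cap y\neq x\cap y$ from $\bigl(F^{-}\bigr)_{z,x}=1$, but the asserted equality $w\cap y=x\cap y$ does \emph{not} follow merely from $w\in\mathcal{A}_{xy}$: that hypothesis only gives $\dim(w\cap y)=\dim(x\cap y)$. In fact $w\cap y=x\cap y$ holds when $w\in\mathcal{A}^{0}_{xy}\cup\mathcal{A}^{+}_{xy}$ (in both cases $w\cap x\in P_{k-i,\,i-1}$, so $(w\cap x)\cap y$ already has dimension $k-i$ and equals $w\cap y=x\cap y$), but for $w\in\mathcal{A}^{-}_{xy}$ one has $w\cap x\in P_{k-i-1,\,i}$ and hence $w\cap y\neq x\cap y$, so your contradiction dissolves. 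The argument can be repaired in that remaining case by the dual computation with sums instead of intersections: $\bigl(F^{-}\bigr)_{z,x}=1$ forces $z+x\in P_{k-i+1,\,i}$, whence $z+y=x+y$; likewise $w\in\mathcal{A}^{-}_{xy}$ gives $w+y=x+y$; but $\bigl(F^{+}\bigr)_{w,z}=1$ gives $w+z\in P_{k-i,\,i+1}$, so $\dim(w+z+y)=k+i+1>k+i=\dim(w+y)$, contradicting $w+y=z+y$.

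The paper does not argue combinatorially at all; it takes precisely the algebraic route you set aside, computing $\bigl(F^{+}F^{-}\bigr)_{w,x}$ from the identity \eqref{LR} case by case over $\mathcal{A}^{0}_{xy},\mathcal{A}^{+}_{xy},\mathcal{A}^{-}_{xy}$. Your circularity worry is unfounded for \eqref{LR}: the only inputs are $(LR)_{w,x}$, which equals the number of $z\in\mathcal{B}_{xy}$ adjacent to $w$ and is read off directly from Lemma~\ref{structure}; the diagonal entries of $K_1,K_2$ from Definition~\ref{kdef}; and the single entries $\bigl(F^{+}\bigr)_{w,x},\bigl(F^{-}\bigr)_{w,x}\in\{0,1\}$ from Remark~\ref{remark2}. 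None of these depend on the lemma being proved.
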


\begin{proof}
    We consider three cases: (i) $w\in \mathcal{A}_{xy}^{0}$, (ii) $w\in \mathcal{A}_{xy}^{+}$, (iii) $w\in \mathcal{A}_{xy}^{-}$. We first assume case (i). Consider the right-hand side of (\ref{LR}). Referring to the table in Lemma \ref{structure}, the $(\mathcal{A}_{xy}^{0},\mathcal{B}_{xy})$-entry is equal to $0$. Hence, 
    \begin{equation*}
        (LR)_{w,x}=0.
    \end{equation*} 
    By Definition \ref{kdef} and Remark \ref{remark2}, the $(w,x)$-entry of 
    \begin{align*}
        &-(q-1)^{-1}\Bigl(\bigl(q^{\frac{n}{2}+1}K_1^{-1}K_2-q^{\frac{n-k}{2}+1}K_2\bigr)F^{+}+\bigl(q^{\frac{n}{2}+1}K_1K_2^{-1}-q^{\frac{k}{2}+1}K_1\bigr)F^{-}\Bigr)\\
            &\;\qquad \qquad \qquad -(q-1)^{-2}\Bigl(q^{\frac{n}{2}+1}K_1K_2-q^{n-\frac{k}{2}+1}K_1-q^{\frac{n+k}{2}+1}K_2+q^{n+1}I\Bigr)
    \end{align*}
    is equal to $0$. The result follows from the above comments and (\ref{LR}).

    We have proved the result for case (i). Next we assume case (ii). Consider the right-hand side of (\ref{LR}). Referring to the table in Lemma \ref{structure}, the $(\mathcal{A}_{xy}^{+},\mathcal{B}_{xy})$-entry is equal to $q^{i+1}[k-i]$. Hence, 
    \begin{equation*}
        (LR)_{w,x}=q^{i+1}[k-i].
    \end{equation*}
    By Definition \ref{kdef} and Remark \ref{remark2}, the $(w,x)$-entry of 
    \begin{equation*}
        \bigl(q^{\frac{n}{2}+1}K_1^{-1}K_2-q^{\frac{n-k}{2}+1}K_2\bigr)F^{+}
    \end{equation*}
    is equal to $(q-1)q^{i+1}[k-i]$, and the $(w,x)$-entries of 
    \begin{equation*}
        \bigl(q^{\frac{n}{2}+1}K_1K_2^{-1}-q^{\frac{k}{2}+1}K_1\bigr)F^{-}, \qquad \qquad (q-1)^{-2}\Bigl(q^{\frac{n}{2}+1}K_1K_2-q^{n-\frac{k}{2}+1}K_1-q^{\frac{n+k}{2}+1}K_2+q^{n+1}I\Bigr)
    \end{equation*}
    are both equal to $0$. The result follows from the above comments and (\ref{LR}). 

    We have proved the result for case (ii). For case (iii), the proof is similar, and omitted.
\end{proof}

\begin{lemma}
\label{F+F-var}
    Pick $x,y\in X$ such that $1<\partial(x,y)<k$. For $w\in \mathcal{A}_{xy}$, the following (i)--(vii) hold:
    \begin{enumerate}[label=(\roman*)]
        \item the $(w,x)$-entry of $F^{-}F^{+}$ is equal to $0$;
        \item if $w\in \mathcal{A}_{xy}^{+}$, then the $(w,x)$-entry of $F^{0}F^{-}$ is equal to $0$;
        \item if $w\in \mathcal{A}_{xy}^{+}$, then the $(w,x)$-entry of $F^{-}F^{0}$ is equal to $0$;
        \item if $w\in \mathcal{A}_{xy}^{+}$, then the $(w,x)$-entry of $\bigl(F^{-}\bigr)^2$ is equal to $0$;
        \item if $w\in \mathcal{A}_{xy}^{-}$, then the $(w,x)$-entry of $F^{0}F^{+}$ is equal to $0$;
        \item if $w\in \mathcal{A}_{xy}^{-}$, then the $(w,x)$-entry of $F^{+}F^{0}$ is equal to $0$;
        \item if $w\in \mathcal{A}_{xy}^{-}$, then the $(w,x)$-entry of $\bigl(F^{+}\bigr)^2$ is equal to $0$.
    \end{enumerate}
\end{lemma}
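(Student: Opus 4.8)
The plan is to prove each of the seven statements by the same mechanism used in Lemma \ref{F+F-}: exploit one of the global matrix identities from Section \ref{somerelations} (namely (\ref{RL}), (\ref{LR}), (\ref{RF0}), (\ref{RF+1}), (\ref{RF-1}) and their $L$-counterparts), evaluate the $(w,x)$-entry of every term using Definition \ref{kdef} for the diagonal matrices $K_1,K_2$ together with the structure-constant table in Lemma \ref{structure} and Remark \ref{remark2} for the combinatorial meaning of entries, and solve for the desired product. The key preliminary observations are: if $w\in \mathcal{A}^0_{xy}$ then $w\in P_{k-i,i}$, if $w\in \mathcal{A}^+_{xy}$ then $w\in P_{k-i-1,i+1}$, and if $w\in \mathcal{A}^-_{xy}$ then $w\in P_{k-i+1,i-1}$; these dimensions feed directly into the $K_1,K_2$ evaluations. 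I would also record the probabilistic/combinatorial reading: $\bigl(F^{\epsilon}F^{\delta}\bigr)_{w,x}$ counts pairs $z$ with $wz$ of type $\epsilon$ and $zx$ of type $\delta$ (equivalently $z$ in a specified orbit relative to $x$), so an entry vanishing is the statement that no such intermediate vertex exists.

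I would organize the proof into the seven items, grouping those that fall out of the same identity. For item (i), $(F^-F^+)_{w,x}=0$ for all $w\in \mathcal{A}_{xy}$: here I would use (\ref{RL}). The term $qRL$ has $(w,x)$-entry governed by the $(\mathcal{O},\mathcal{B}_{xy})$ column times a path-count; more efficiently, since $F^0,F^+,F^-$ mutually commute (Lemma \ref{commute}), it suffices to note $F^-F^+=F^+F^-$ and invoke Lemma \ref{F+F-} directly — so item (i) is immediate, no new identity needed. For items (v),(vi),(vii) — the case $w\in \mathcal{A}^-_{xy}$ with a product involving $F^+$ — I would use the identities (\ref{RF+1}) (for $F^+F^0$ via transpose/symmetry, or rather (\ref{LF+1})), and for $(F^+)^2$ the relation $qRF^+ - F^+R - F^0R + (\cdots)R = 0$ combined with the $(\mathcal{A}^-_{xy},\mathcal{B}_{xy})$ and $(\mathcal{A}^-_{xy},\mathcal{A}^+_{xy})$ entries of Lemma \ref{structure}, the latter being $0$, which should force the $F^+$-products to collapse. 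Symmetrically, items (ii),(iii),(iv) for $w\in \mathcal{A}^+_{xy}$ with products involving $F^-$ use the same identities with the roles of $+$ and $-$ (equivalently $k$ and $n-k$, $K_1$ and $K_1^{-1}$) swapped, drawing on the $(\mathcal{A}^+_{xy},\mathcal{A}^-_{xy})$-entry $0$ in Lemma \ref{structure}.

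A cleaner uniform approach, which I would actually adopt, is the following. By Remark \ref{remark2}, $(F^+)_{w,z}=1$ forces $\partial(w,y)=\partial(z,y)$ and $w\in \mathcal{A}^+_{zy}$, which by Lemma \ref{size} constrains the relative dimensions; chaining two such conditions, $(F^\epsilon F^\delta)_{w,x}\neq 0$ requires a vertex $z$ adjacent to both $w$ and $x$, lying in $\Gamma(x)$, with $\partial(z,y)=\partial(x,y)$, and with $wz$ of type $\epsilon$, $zx$ of type $\delta$. Tracking the pair $(\dim(w\cap y),\dim w)$ through the two covering steps (each of type $0$, $+$, or $-$ changes the pair in a prescribed way, per Definition \ref{slashcover} and Lemma \ref{coverlem}) shows that the composite dimension change from $x$ to $w$ along a type-$\delta$ then type-$\epsilon$ step is incompatible with $w$'s actual orbit unless $\epsilon,\delta$ are "aligned" with that orbit — which is exactly the content of (i)--(vii). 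So each item reduces to a short parity/dimension-counting contradiction, and I would present it that way, falling back on the matrix identities only where the dimension bookkeeping alone is not decisive.

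The main obstacle I anticipate is item (vii) (and its mirror (iv)): showing $(F^+)^2_{w,x}=0$ when $w\in \mathcal{A}^-_{xy}$. Both steps $x\to z$ and $z\to w$ being type $+$ means each is a $\backslash$-cover on the sum side, so going from $x$ up to $z=x+z$ and from $z$ down, the $j$-coordinate (the "$n-k$ direction") is what changes, whereas membership of $w$ in $\mathcal{A}^-_{xy}$ says $w$ and $x$ differ by a $/$-type configuration, i.e. the $i$-coordinate moved. These are genuinely incompatible, but pinning down that a type-$+$/type-$+$ two-step cannot land in $\mathcal{A}^-_{xy}$ requires care about where the intermediate vertex $z$ sits (it is forced into $\mathcal{A}^+_{xy}$, and the $(\mathcal{A}^+_{xy},\mathcal{A}^-_{xy})$-entry of Lemma \ref{structure} being $0$ then closes the argument); I would route item (vii) through (\ref{RF+1}) evaluated at $(w,x)$ with $w\in P_{k-i+1,i-1}$, using that $(RF^+)_{w,x}=(\mathcal{A}^-_{xy},\mathcal{A}^+_{xy})\text{-count}=0$ and $(F^0R)_{w,x}$ likewise vanishes, leaving $(F^+R)_{w,x}=0$ — and then a final transpose or commutativity argument to pass from $F^+R$ to $(F^+)^2$. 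Verifying that last passage is the step I would double-check most carefully.
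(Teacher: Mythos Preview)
Your treatment of (i) is correct and matches the paper exactly: $F^{-}F^{+}=F^{+}F^{-}$ by Lemma~\ref{commute}, then invoke Lemma~\ref{F+F-}.

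The approach you say you would ``actually adopt'' has a genuine gap. Your preliminary observations place $w\in\mathcal{A}^{+}_{xy}$ in $P_{k-i-1,i+1}$ and $w\in\mathcal{A}^{-}_{xy}$ in $P_{k-i+1,i-1}$; this is wrong. Every $w\in\mathcal{A}_{xy}$ has $\partial(w,y)=i$, hence $\dim(w\cap y)=k-i$, hence $w\in P_{k-i,i}$ regardless of which of $\mathcal{A}^{0}_{xy},\mathcal{A}^{+}_{xy},\mathcal{A}^{-}_{xy}$ it belongs to. More to the point, by Definition~\ref{ftildedef} each of $F^{0},F^{+},F^{-}$ has nonzero $(u,v)$-entry only when $u$ and $v$ lie in the \emph{same} cell $P_{i,j}$. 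So tracking $(\dim(w\cap y),\dim w)$ through a two-step $F^{\epsilon}F^{\delta}$ path gives no information whatsoever---there is no ``composite dimension change'' to be incompatible with anything.

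Your matrix-identity fallback for (vii) also fails. For $w\in\mathcal{A}^{-}_{xy}$ every term in (\ref{RF+1}) has $(w,x)$-entry $0$: the factor $R$ forces either $z\in\mathcal{B}_{xy}$ or $R_{w,x}=1$, and in each case the distance condition $\partial(w,y)=i$ kills the term. The identity collapses to $0=0$ and says nothing about $\bigl((F^{+})^{2}\bigr)_{w,x}$. There is no ``commutativity'' passage from $F^{+}R$ to $(F^{+})^{2}$; these count vertices in $\mathcal{B}_{xy}$ and in $\mathcal{A}^{+}_{xy}$ respectively.

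You do have a correct argument, though you mention it only parenthetically for (vii): the $(\mathcal{A}^{+}_{xy},\mathcal{A}^{-}_{xy})$- and $(\mathcal{A}^{-}_{xy},\mathcal{A}^{+}_{xy})$-entries in Lemma~\ref{structure} are $0$. This alone handles (ii),(iv),(v),(vii) uniformly: a nonzero entry produces an intermediate $z$ in $\mathcal{A}^{\mp}_{xy}$ adjacent to $w\in\mathcal{A}^{\pm}_{xy}$, contradicting the table. Then (iii),(vi) follow from (ii),(v) by Lemma~\ref{commute}. Extract this and you have a complete proof, in fact shorter than the paper's.

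For comparison, the paper argues (ii),(iv),(v),(vii) by reapplying Lemma~\ref{F+F-} with a shifted base vertex. For (ii): if $(F^{0}F^{-})_{w,x}>0$, pick $z$ with $(F^{0})_{w,z}=(F^{-})_{z,x}=1$; then $(F^{+})_{w,x}=(F^{-})_{x,z}=1$ forces $(F^{+}F^{-})_{w,z}\geq 1$, contradicting Lemma~\ref{F+F-} applied with $z$ in place of $x$ (one checks $1<\partial(z,y)<k$ and $w\in\mathcal{A}_{zy}$). Your structure-table route avoids this basepoint shift and does not re-enter Lemma~\ref{F+F-}, at the cost of importing one more entry from Lemma~\ref{structure}.
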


\begin{proof}
    (i) Combine Lemmas \ref{commute}, \ref{F+F-}.
    
    (ii) Since $w\in \mathcal{A}^{+}_{xy}$, we have $1<\partial(w,y)<k$. Since the entries of $F^{0}, F^{-}$ are nonnegative, we have $(F^0F^-)_{w,x}\geq 0$. Assume that the $(F^0F^-)_{w,x}>0$. Then $\mathcal{A}_{xy}^{-}\cap \mathcal{A}^{0}_{wy}$ is nonempty. Let $z\in \mathcal{A}_{xy}^{-}\cap \mathcal{A}^{0}_{wy}$. Note that $1<\partial(z,y)<k$ and $w\in \mathcal{A}_{zy}$. Since $(F^+)_{w,x}=1$ and $(F^-)_{x,z}=1$, we have $(F^{+}F^{-})_{w,z}>0$. This contradicts Lemma \ref{F+F-}. The result follows.

    (iii) Combine (ii) and Lemma \ref{commute}.

    (iv),(v),(vii) Similar to (ii) above.
    
    (vi) Combine (v) and Lemma \ref{commute}.
\end{proof}

\begin{lemma}
\label{0+}
    Pick $x,y\in X$ such that $1<\partial(x,y)<k$. Write $i=\partial(x,y)$. For $w\in \mathcal{A}^{0}_{xy}$ and $w'\in \mathcal{A}^{+}_{xy}$, the following (\ref{00+}), (\ref{0++}) hold:
        \begin{align}
            (q-1)[i]\bigl(F^0F^+\bigr)_{w,x}=q^{i+1}[n-k-i]\Bigl(\bigl(F^0\bigr)^2\Bigr)_{w',x};\label{00+}\\
            (q-1)[i]\Bigl(\bigl(F^+\bigr)^2\Bigr)_{w,x}=q^{i+1}[n-k-i]\bigl(F^0F^+\bigr)_{w',x}.
        \label{0++}
        \end{align}
\end{lemma}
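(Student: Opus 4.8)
The plan is to prove both identities by double counting. Two preliminary facts are needed. First, each of $F^{0}$ and $F^{+}$ is a symmetric matrix, since the conditions in Definition \ref{ftildedef} defining $\bigl(F^{0}\bigr)_{u,v}=1$ and $\bigl(F^{+}\bigr)_{u,v}=1$ are symmetric in $u$ and $v$. Second, these conditions are phrased entirely in terms of dimensions and inclusion in $P$, so $F^{0}$ and $F^{+}$ are invariant under the $\Stab(x,y)$-action on $P$; that is, $M_{g(u),g(v)}=M_{u,v}$ for every word $M$ in $F^{0},F^{+}$, every $u,v\in P$, and every $g\in\Stab(x,y)$. Putting $v=x$ and using $g(x)=x$, together with the fact that $\Stab(x,y)$ acts transitively on each of $\mathcal{A}^{0}_{xy}$ and $\mathcal{A}^{+}_{xy}$ (Lemma \ref{5orbitslem}), we conclude that $\bigl(F^{0}F^{+}\bigr)_{w,x}$ and $\bigl(\bigl(F^{+}\bigr)^{2}\bigr)_{w,x}$ are independent of the choice of $w\in\mathcal{A}^{0}_{xy}$, while $\bigl(\bigl(F^{0}\bigr)^{2}\bigr)_{w,x}$ and $\bigl(F^{0}F^{+}\bigr)_{w,x}$ are independent of the choice of $w\in\mathcal{A}^{+}_{xy}$. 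This is what makes the statement meaningful and is the only input beyond pure counting. I also record that, by Definition \ref{ftildedef} together with the definitions of $\mathcal{A}^{0}_{xy},\mathcal{A}^{+}_{xy}$, one has $\bigl(F^{0}\bigr)_{u,x}=1$ exactly when $u\in\mathcal{A}^{0}_{xy}$ and $\bigl(F^{+}\bigr)_{u,x}=1$ exactly when $u\in\mathcal{A}^{+}_{xy}$ (the covering condition already forces $u\in\Gamma(x)$).

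For (\ref{00+}), I would count the set $W_{0}=\{(u,z)\mid u\in\mathcal{A}^{0}_{xy},\ z\in\mathcal{A}^{+}_{xy},\ \bigl(F^{0}\bigr)_{u,z}=1\}$ in two ways. Summing over the first coordinate: for fixed $u$, the number of admissible $z$ equals $\sum_{z}\bigl(F^{0}\bigr)_{u,z}\bigl(F^{+}\bigr)_{z,x}=\bigl(F^{0}F^{+}\bigr)_{u,x}$, so $|W_{0}|=|\mathcal{A}^{0}_{xy}|\cdot\bigl(F^{0}F^{+}\bigr)_{w,x}$. Summing over the second coordinate and using the symmetry of $F^{0}$: for fixed $z$, the number of admissible $u$ equals $\sum_{u}\bigl(F^{0}\bigr)_{z,u}\bigl(F^{0}\bigr)_{u,x}=\bigl(\bigl(F^{0}\bigr)^{2}\bigr)_{z,x}$, so $|W_{0}|=|\mathcal{A}^{+}_{xy}|\cdot\bigl(\bigl(F^{0}\bigr)^{2}\bigr)_{w',x}$. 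Now substitute $|\mathcal{A}^{0}_{xy}|=(q-1)[i]^{2}$ and $|\mathcal{A}^{+}_{xy}|=q^{i+1}[i][n-k-i]$ from Lemma \ref{size}, equate the two expressions for $|W_{0}|$, and cancel the nonzero factor $[i]$; this yields (\ref{00+}).

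For (\ref{0++}), I would repeat the argument with $W_{+}=\{(u,z)\mid u\in\mathcal{A}^{0}_{xy},\ z\in\mathcal{A}^{+}_{xy},\ \bigl(F^{+}\bigr)_{u,z}=1\}$. Summing over the first coordinate gives $|W_{+}|=|\mathcal{A}^{0}_{xy}|\cdot\bigl(\bigl(F^{+}\bigr)^{2}\bigr)_{w,x}$. Summing over the second coordinate, using the symmetry of $F^{+}$ and then $F^{0}F^{+}=F^{+}F^{0}$ (Lemma \ref{commute}), gives $|W_{+}|=|\mathcal{A}^{+}_{xy}|\cdot\bigl(F^{0}F^{+}\bigr)_{w',x}$. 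Substituting the two cardinalities from Lemma \ref{size} and cancelling $[i]$ yields (\ref{0++}). The only delicate point is the bookkeeping underlying the two preliminary facts; the rest is a routine double count, so I expect no genuine obstacle.
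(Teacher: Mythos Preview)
Your proof is correct and follows essentially the same double-counting argument as the paper: both count the type-$0$ (resp.\ type-$+$) edges between $\mathcal{A}^{0}_{xy}$ and $\mathcal{A}^{+}_{xy}$ in two ways, then substitute the orbit sizes from Lemma~\ref{size}. You are somewhat more explicit than the paper in spelling out the preliminary facts (symmetry of $F^{0},F^{+}$ and $\Stab(x,y)$-invariance to justify that the matrix entries are independent of the orbit representative), but the core idea is identical.
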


\begin{proof}
    We first prove (\ref{00+}). Observe that $\bigl(F^{0}F^+\bigr)_{w,x}$ counts the number of vertices $z\in \mathcal{A}^{+}_{xy}$ adjacent to $w$ such that the edge $wz$ has type $0$. Hence, the total number of edges of type $0$ between $\mathcal{A}_{xy}^{0}$ and $\mathcal{A}_{xy}^{+}$ is equal to $a_i^{0}\bigl(F^0F^+\bigr)_{w,x}$. Observe that $\Bigl(\bigl(F^{0}\bigr)^2\Bigr)_{w',x}$ counts the number of vertices $z\in \mathcal{A}^{0}_{xy}$ adjacent to $w'$ such that the edge $w'z$ has type $0$. Hence, the total number of edges of type $0$ between $\mathcal{A}_{xy}^{0}$ and $\mathcal{A}_{xy}^{+}$ is equal to $a_i^{+}\Bigl(\bigl(F^0\bigr)^2\Bigr)_{w',x}$. Therefore, 
    \begin{equation*}
        a_i^{0}\bigl(F^0F^+\bigr)_{w,x}=a_i^{+}\Bigl(\bigl(F^0\bigr)^2\Bigr)_{w',x}.
    \end{equation*}
    Substitute the values of $a_i^{0}$ and $a_{i}^{+}$ from Lemma \ref{size} and simplify. The result follows.

    We have proved (\ref{00+}). The proof of (\ref{0++}) is similar, and omitted.
\end{proof}

\begin{lemma}
\label{0+entries}
    Pick $x,y\in X$ such that $1<\partial(x,y)<k$. Write $i=\partial(x,y)$. For $w\in \mathcal{A}^{0}_{xy}$ and $w'\in \mathcal{A}^{+}_{xy}$, 
    \begin{equation}
    \label{eq1}
        \bigl(F^{0}F^{+}\bigr)_{w,x}=\bigl(F^{+}F^{0}\bigr)_{w,x}=\Bigl(\bigl(F^0\bigr)^2\Bigr)_{w',x}=0, 
    \end{equation}
    \begin{equation}
    \label{eq2}
        \Bigl(\bigl(F^{+}\bigr)^2\Bigr)_{w,x}=q^{i+1}[n-k-i], \qquad \qquad \bigl(F^{0}F^{+}\bigr)_{w',x}=\bigl(F^{+}F^{0}\bigr)_{w',x}=(q-1)[i].
    \end{equation}
\end{lemma}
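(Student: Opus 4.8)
The plan is to reduce the statement to a single vanishing, prove that vanishing by a short geometric argument, and recover everything else from Lemma~\ref{0+}, Lemma~\ref{commute}, Lemma~\ref{structure}, and Lemma~\ref{F+F-var}. Since $1<i<k$ and $n>2k$, the scalars $(q-1)[i]$ and $q^{i+1}[n-k-i]$ are nonzero, so (\ref{00+}) shows that $\bigl((F^0)^2\bigr)_{w',x}=0$ for $w'\in\mathcal{A}^+_{xy}$ is equivalent to $\bigl(F^0F^+\bigr)_{w,x}=0$ for $w\in\mathcal{A}^0_{xy}$; I would establish the former and deduce the rest. I expect the matrix relations of Section~\ref{somerelations} to be of no help here: at the relevant $(\cdot,x)$-entries (with $w$ or $w'$ in $\mathcal{A}_{xy}$) the terms involving $R$ and $L$ all vanish and the identities degenerate to tautologies, so the vanishing really has to come from geometry.

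The key claim is that $\bigl((F^0)^2\bigr)_{w',x}=0$ for $w'\in\mathcal{A}^+_{xy}$. This entry counts vertices $u$ adjacent to $w'$ with $u\in\mathcal{A}^0_{xy}$ and edge $w'u$ of type $0$; for such $u$ the vertices $w',u,x$ are pairwise adjacent, all at distance $i$ from $y$, with edges $w'u$ and $ux$ of type $0$ and edge $w'x$ of type $+$, so it suffices to rule out such a triangle. An edge of type $0$ or $+$ forces the two endpoints to have the \emph{same} intersection with $y$ (the $\backslash$-covering conditions promote equality of dimensions to equality of subspaces), so $w'\cap y=u\cap y=x\cap y=:Y_0$, of dimension $k-i$, and $w',u,x\in P_{k-i,i}$. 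Since $uw'$ and $ux$ have type $0$, the sums $u+w'$ and $u+x$ lie in $P_{k-i+1,i}$; writing $(u+w')\cap y=Y_0\oplus\langle v_1\rangle$ and $(u+x)\cap y=Y_0\oplus\langle v_2\rangle$ with $v_1,v_2\in y\setminus Y_0$, one has $v_1,v_2\notin u$, hence $u+w'=u+\langle v_1\rangle$ and $u+x=u+\langle v_2\rangle$, so $w'\subseteq u+\langle v_1\rangle$ and $x\subseteq u+\langle v_2\rangle$. If $v_1,v_2$ were linearly dependent modulo $u$ then $u+\langle v_1\rangle=u+\langle v_2\rangle$, forcing $w'+x=u+\langle v_1\rangle$ and $\dim((w'+x)\cap y)=k-i+1$, contradicting that $w'x$ has type $+$; so $v_1,v_2$ are independent modulo $u$. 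Then $w'+x\subseteq M:=u+\langle v_1,v_2\rangle$, where $\dim M=k+2$, and since $v_1,v_2\in y$ we have $M+y=u+y$, of dimension $k+i$, whence $\dim(M\cap y)=(k+2)+k-(k+i)=k-i+2$. As $w'+x$ is a hyperplane of $M$ it meets $M\cap y$, hence $y$, in dimension at least $k-i+1$, contradicting that $w'x$ has type $+$ (which requires $\dim((w'+x)\cap y)=k-i$). Therefore $\bigl((F^0)^2\bigr)_{w',x}=0$; then $\bigl(F^0F^+\bigr)_{w,x}=0$ by (\ref{00+}) and $\bigl(F^+F^0\bigr)_{w,x}=0$ by Lemma~\ref{commute}, which gives (\ref{eq1}).

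For (\ref{eq2}) I would read the remaining entries off Lemma~\ref{structure} using the vanishings just proved and Lemma~\ref{F+F-var}. For $w\in\mathcal{A}^0_{xy}$ there are $q^{i+1}[n-k-i]$ vertices of $\mathcal{A}^+_{xy}$ adjacent to $w$; among the corresponding edges the type-$0$ ones number $\bigl(F^0F^+\bigr)_{w,x}=0$ and the type-$-$ ones number $\bigl(F^-F^+\bigr)_{w,x}=0$ by Lemma~\ref{F+F-var}(i), so the type-$+$ ones number $\bigl((F^+)^2\bigr)_{w,x}=q^{i+1}[n-k-i]$. For $w'\in\mathcal{A}^+_{xy}$ there are $(q-1)[i]$ vertices of $\mathcal{A}^0_{xy}$ adjacent to $w'$; the type-$0$ ones number $\bigl((F^0)^2\bigr)_{w',x}=0$ and the type-$-$ ones number $\bigl(F^-F^0\bigr)_{w',x}=0$ by Lemma~\ref{F+F-var}(iii), so the type-$+$ ones number $\bigl(F^+F^0\bigr)_{w',x}=(q-1)[i]$, and $\bigl(F^0F^+\bigr)_{w',x}=(q-1)[i]$ by Lemma~\ref{commute} (equivalently by (\ref{0++})). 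The main obstacle is the geometric step of the second paragraph, and within it the one point needing care is the independence of $v_1$ and $v_2$ modulo $u$, which is exactly what prevents the triangle from collapsing into a configuration with three type-$0$ edges.
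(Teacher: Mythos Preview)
Your proof is correct, but it takes a genuinely different route from the paper at the key step. The paper never argues geometrically. Instead it sets $\alpha=(F^0F^+)_{w,x}$ and $\beta=\bigl((F^+)^2\bigr)_{w,x}$, uses Lemma~\ref{0+} to rewrite $\bigl((F^0)^2\bigr)_{w',x}$ and $(F^+F^0)_{w',x}$ in terms of $\alpha,\beta$, and then evaluates the identities $(F^0+F^++F^-)F^0=FF^0$ at $(w',x)$ and $(F^0+F^++F^-)F^+=FF^+$ at $(w,x)$. With Lemma~\ref{F+F-var} and the structure constants from Lemma~\ref{structure}, this yields the $2\times 2$ linear system
\[
\frac{q^{i+1}[n-k-i]}{(q-1)[i]}\alpha+\frac{(q-1)[i]}{q^{i+1}[n-k-i]}\beta=(q-1)[i],\qquad \alpha+\beta=q^{i+1}[n-k-i],
\]
whose unique solution (since $(q-1)[i]\neq q^{i+1}[n-k-i]$) is $\alpha=0$, $\beta=q^{i+1}[n-k-i]$.

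Your argument replaces this linear-algebra step with a direct proof that $\bigl((F^0)^2\bigr)_{w',x}=0$: you show that a triangle with two type-$0$ edges and one type-$+$ edge is impossible by tracking $\dim((w'+x)\cap y)$ inside $M=u+\langle v_1,v_2\rangle$. This is more illuminating as to \emph{why} the entry vanishes, and it makes the subsequent deductions (your third paragraph, which coincides with the paper's use of $FF^+$ and $FF^0$) immediate rather than the output of solving a system. The paper's approach, by contrast, stays entirely within the combinatorics already encoded in Lemmas~\ref{structure}, \ref{F+F-var}, and \ref{0+}, avoiding any new reasoning in the projective geometry; it is more uniform with the rest of Section~\ref{entry} and is what makes the parallel proof of Lemma~\ref{0-entries} a pure transcription. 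Your remark that the relations of Section~\ref{somerelations} are of no help is accurate for both approaches: the paper does not use them here either.
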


\begin{proof}
    Write 
    \begin{equation}
        \label{alphabeta}
        \alpha=\bigl(F^0F^+\bigr)_{w,x}, \qquad \qquad \beta=\Bigl(\bigl(F^+\bigr)^2\Bigr)_{w,x}.
    \end{equation}
    By Lemma \ref{commute}, $\bigl(F^+F^0\bigr)_{w,x}=\alpha$. By (\ref{00+}), 
    \begin{equation}
    \label{f00}
        \Bigl(\bigl(F^0\bigr)^2\Bigr)_{w',x}=\frac{q^{i+1}[n-k-i]}{(q-1)[i]}\alpha.
    \end{equation}
    By Lemma \ref{commute} and (\ref{0++}),
    \begin{equation*}
        \bigl(F^{0}F^{+}\bigr)_{w',x}=\bigl(F^{+}F^{0}\bigr)_{w',x}=\frac{(q-1)[i]}{q^{i+1}[n-k-i]}\beta.
    \end{equation*}
    It suffices to show that $\alpha=0$ and $\beta=q^{i+1}[n-k-i]$. By (\ref{fdef}),
    \begin{align}    
    \bigl(F^{0}\bigr)^2+F^{+}F^{0}+F^{-}F^{0}&=FF^{0},    \label{ff0eq}\\
    F^{0}F^{+}+\bigl(F^{+}\bigr)^2+F^{-}F^{+}&=FF^{+}.    \label{ff+eq}
    \end{align}
    We now calculate the $(w',x)$-entry of each term in (\ref{ff0eq}). By Lemma \ref{commute} and (\ref{0++}), 
    \begin{equation}
        \label{f+0}
        \bigl(F^+F^{0}\bigr)_{w'x}=\frac{(q-1)[i]}{q^{i+1}[n-k-i]}\beta.
    \end{equation}
    
    By Lemma \ref{F+F-var}(iii),
    \begin{equation}
    \label{f-0}
        \bigl(F^{-}F^{0}\bigr)_{w'x}=0.
    \end{equation}
    Observe that $\bigl(FF^0\bigr)_{w'x}$ counts the number of vertices $z\in F^0$ that are adjacent to $w'$. By the $(\mathcal{A}_{xy}^{+}, \mathcal{A}_{xy}^{0})$-entry of the table in Lemma \ref{structure}, 
    \begin{equation}
    \label{ff0}
        \bigl(FF^{0}\bigr)_{w',x}=(q-1)[i].
    \end{equation}
    Combine (\ref{f00}), (\ref{ff0eq}), (\ref{f+0})--(\ref{ff0}) to obtain
    \begin{equation}
    \label{system1}
        \frac{q^{i+1}[n-k-i]}{(q-1)[i]}\alpha+\frac{(q-1)[i]}{q^{i+1}[n-k-i]}\beta=(q-1)[i].
    \end{equation}
    Next we calculate the $(w,x)$-entry of each term in (\ref{ff+eq}). Observe that $\bigl(FF^+\bigr)_{w,x}$ counts the number of vertices $z\in F^{+}$ that are adjacent to $w$. Combining (\ref{alphabeta}), Lemma \ref{F+F-var}(i) and the $(\mathcal{A}^{0}_{xy},\mathcal{A}^{+}_{xy})$-entry of the table in Lemma \ref{structure}, we obtain
    \begin{equation}
    \label{system2}
        \alpha+\beta=q^{i+1}[n-k-i].
    \end{equation}
    Solve the system of equations (\ref{system1}), (\ref{system2}) to obtain the result.
\end{proof}

\begin{lemma}
\label{0-}
    Pick $x,y\in X$ such that $1<\partial(x,y)<k$. Write $i=\partial(x,y)$. For $w\in \mathcal{A}^{0}_{xy}$ and $w'\in \mathcal{A}^{-}_{xy}$, the following (\ref{00-}), (\ref{0--}) hold:
    \begin{align}
            (q-1)[i]\bigl(F^0F^-\bigr)_{w,x}=q^{i+1}[k-i]\Bigl(\bigl(F^0\bigr)^2\Bigr)_{w',x};        \label{00-}\\
            (q-1)[i]\Bigl(\bigl(F^-\bigr)^2\Bigr)_{w,x}=q^{i+1}[k-i]\bigl(F^0F^-\bigr)_{w',x}.\label{0--}
        \end{align}
\end{lemma}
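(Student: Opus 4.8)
The plan is to mimic the double-counting argument from Lemma \ref{0+}, simply replacing the orbit $\mathcal{A}^{+}_{xy}$ by $\mathcal{A}^{-}_{xy}$ and the edge type $+$ by the edge type $-$. For (\ref{00-}), I would first observe that $\bigl(F^{0}F^{-}\bigr)_{w,x}$ counts the number of vertices $z\in \mathcal{A}^{-}_{xy}$ adjacent to $w$ such that the edge $wz$ has type $0$; this uses Remark \ref{remark2} and the definition of $F^{-}$ to identify the $(x,z)$-entry, together with the observation that the product entry $\sum_{z}\bigl(F^{0}\bigr)_{w,z}\bigl(F^{-}\bigr)_{z,x}$ picks out exactly such $z$. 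Hence the total number of edges of type $0$ with one endpoint in $\mathcal{A}^{0}_{xy}$ and the other in $\mathcal{A}^{-}_{xy}$ equals $a_i^{0}\bigl(F^{0}F^{-}\bigr)_{w,x}$, since each vertex of $\mathcal{A}^{0}_{xy}$ contributes the same count (the orbits form an equitable partition by Lemma \ref{structure}, and more precisely the $(w,x)$-entry depends only on the orbit of $w$).

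Symmetrically, $\Bigl(\bigl(F^{0}\bigr)^2\Bigr)_{w',x}$ counts the number of vertices $z\in \mathcal{A}^{0}_{xy}$ adjacent to $w'$ such that the edge $w'z$ has type $0$; here I would note that type is symmetric in its two endpoints (Definition \ref{type} is manifestly symmetric in $w,z$), so an edge counted from the $\mathcal{A}^{-}_{xy}$-side is the same edge counted from the $\mathcal{A}^{0}_{xy}$-side. Thus the same total number of type-$0$ edges between $\mathcal{A}^{0}_{xy}$ and $\mathcal{A}^{-}_{xy}$ also equals $a_i^{-}\Bigl(\bigl(F^{0}\bigr)^2\Bigr)_{w',x}$. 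Equating the two expressions gives
\begin{equation*}
    a_i^{0}\bigl(F^{0}F^{-}\bigr)_{w,x}=a_i^{-}\Bigl(\bigl(F^{0}\bigr)^2\Bigr)_{w',x},
\end{equation*}
and substituting $a_i^{0}=(q-1)[i]^2$ and $a_i^{-}=q^{i+1}[i][k-i]$ from Lemma \ref{size}, then cancelling a common factor of $[i]$, yields (\ref{00-}).

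For (\ref{0--}) I would run the identical argument with type $0$ replaced by type $-$: the left side counts type-$-$ edges between $\mathcal{A}^{0}_{xy}$ and $\mathcal{A}^{-}_{xy}$ as seen from $\mathcal{A}^{0}_{xy}$ (via $\bigl(F^{-}\bigr)^2$ — here one factor $F^{-}$ records the type-$-$ edge $wz$ and the other records $z\in \mathcal{A}^{-}_{xy}$, i.e. the edge $zx$), weighted by $a_i^{0}$; the right side counts the same edges as seen from $\mathcal{A}^{-}_{xy}$ (via $F^{0}F^{-}$), weighted by $a_i^{-}$; equating and substituting the cardinalities gives (\ref{0--}). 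I expect no genuine obstacle here — the main subtlety, as in Lemma \ref{0+}, is the bookkeeping to confirm that each matrix-product entry counts exactly the claimed family of edges, in particular checking that the intermediate vertex $z$ automatically lies in the correct orbit and that the relevant distances satisfy $1<\partial(\cdot,y)<k$ so that Remark \ref{remark} applies; once that is in place, the double count and the arithmetic from Lemma \ref{size} are routine. Accordingly I would simply remark that the proof is parallel to that of Lemma \ref{0+}, and omit the repeated details.
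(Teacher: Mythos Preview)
Your proposal is correct and follows essentially the same approach as the paper: a double count of type-$0$ (respectively type~$-$) edges between $\mathcal{A}^{0}_{xy}$ and $\mathcal{A}^{-}_{xy}$, followed by substitution of $a_i^{0}$ and $a_i^{-}$ from Lemma~\ref{size}. The paper's proof is slightly terser but structurally identical, and like you it declares the argument for (\ref{0--}) to be parallel and omits it.
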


\begin{proof}
    We first prove (\ref{00-}). Observe that $\bigl(F^{0}F^-\bigr)_{w,x}$ counts the number of vertices $z\in \mathcal{A}^{-}_{xy}$ adjacent to $w$ such that the edge $wz$ has type $0$. Hence, the total number of edges of type $0$ between $\mathcal{A}_{xy}^{0}$ and $\mathcal{A}_{xy}^{-}$ is equal to $a_i^{0}\bigl(F^0F^-\bigr)_{w,x}$. Observe that $\Bigl(\bigl(F^{0}\bigr)^2\Bigr)_{w',x}$ counts the number of vertices $z\in \mathcal{A}^{0}_{xy}$ adjacent to $w'$ such that the edge $w'z$ has type $0$. Hence, the total number of edges of type $0$ between $\mathcal{A}_{xy}^{0}$ and $\mathcal{A}_{xy}^{-}$ is equal to $a_i^{-}\Bigl(\bigl(F^0\bigr)^2\Bigr)_{w',x}$. Therefore, 
    \begin{equation*}
        a_i^{0}\bigl(F^0F^-\bigr)_{w,x}=a_i^{-}\Bigl(\bigl(F^0\bigr)^2\Bigr)_{w',x}.
    \end{equation*}
    Substitute the values of $a_i^{0}$ and $a_{i}^{-}$ from Lemma \ref{size} and simplify. The result follows.

    We have proved (\ref{00-}). The proof of (\ref{0--}) is similar, and omitted.
\end{proof}

\begin{lemma}
\label{0-entries}
    Pick $x,y\in X$ such that $1<\partial(x,y)<k$. Write $i=\partial(x,y)$. For $w\in \mathcal{A}^{0}_{xy}$ and $w'\in \mathcal{A}^{-}_{xy}$, 
    \begin{equation}
    \label{eq3}
        \bigl(F^{0}F^{-}\bigr)_{w,x}=\bigl(F^{-}F^{0}\bigr)_{w,x}=\Bigl(\bigl(F^0\bigr)^2\Bigr)_{w',x}=0, 
    \end{equation}
    \begin{equation}
    \label{eq4}
        \Bigl(\bigl(F^{-}\bigr)^2\Bigr)_{w,x}=q^{i+1}[k-i], \qquad \qquad \bigl(F^{0}F^{-}\bigr)_{w',x}=\bigl(F^{-}F^{0}\bigr)_{w',x}=(q-1)[i].
    \end{equation}
\end{lemma}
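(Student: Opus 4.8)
The plan is to mirror the proof of Lemma \ref{0+entries} almost verbatim, with the pair $(\mathcal{A}^{+}_{xy},n-k)$ replaced throughout by $(\mathcal{A}^{-}_{xy},k)$. First I would set $\alpha=\bigl(F^{0}F^{-}\bigr)_{w,x}$ and $\beta=\Bigl(\bigl(F^{-}\bigr)^2\Bigr)_{w,x}$. By Lemma \ref{commute} we get $\bigl(F^{-}F^{0}\bigr)_{w,x}=\alpha$, and by (\ref{00-}) we obtain $\Bigl(\bigl(F^{0}\bigr)^2\Bigr)_{w',x}=\frac{q^{i+1}[k-i]}{(q-1)[i]}\alpha$, while Lemma \ref{commute} together with (\ref{0--}) gives $\bigl(F^{0}F^{-}\bigr)_{w',x}=\bigl(F^{-}F^{0}\bigr)_{w',x}=\frac{(q-1)[i]}{q^{i+1}[k-i]}\beta$. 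So it reduces, exactly as before, to proving $\alpha=0$ and $\beta=q^{i+1}[k-i]$.

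To pin down $\alpha$ and $\beta$ I would extract two linear equations. From (\ref{fdef}) we have the entrywise identities $\bigl(F^{0}\bigr)^2+F^{+}F^{0}+F^{-}F^{0}=FF^{0}$ and $F^{0}F^{-}+F^{+}F^{-}+\bigl(F^{-}\bigr)^2=FF^{-}$. Evaluating the first at $(w',x)$: the $\bigl(F^{0}\bigr)^2$ term is $\frac{q^{i+1}[k-i]}{(q-1)[i]}\alpha$; the $F^{-}F^{0}$ term is $\bigl(F^{-}F^{0}\bigr)_{w',x}=\frac{(q-1)[i]}{q^{i+1}[k-i]}\beta$ by Lemma \ref{commute} and (\ref{0--}); the $F^{+}F^{0}$ term vanishes by Lemma \ref{F+F-var}(vi) (here I use that $w'\in\mathcal{A}^{-}_{xy}$); and $\bigl(FF^{0}\bigr)_{w',x}$ counts vertices $z\in\mathcal{A}^{0}_{xy}$ adjacent to $w'$, which by the $(\mathcal{A}^{-}_{xy},\mathcal{A}^{0}_{xy})$-entry of the table in Lemma \ref{structure} equals $(q-1)[i]$. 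This yields
\begin{equation*}
    \frac{q^{i+1}[k-i]}{(q-1)[i]}\alpha+\frac{(q-1)[i]}{q^{i+1}[k-i]}\beta=(q-1)[i].
\end{equation*}
Evaluating the second identity at $(w,x)$: $F^{+}F^{-}$ contributes $0$ by Lemma \ref{F+F-}, and $\bigl(FF^{-}\bigr)_{w,x}$ counts vertices $z\in\mathcal{A}^{-}_{xy}$ adjacent to $w$, which by the $(\mathcal{A}^{0}_{xy},\mathcal{A}^{-}_{xy})$-entry of Lemma \ref{structure} equals $q^{i+1}[k-i]$. This yields $\alpha+\beta=q^{i+1}[k-i]$. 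Solving this $2\times2$ system gives $\alpha=0$ and $\beta=q^{i+1}[k-i]$, and back-substitution into the formulas from the first paragraph gives all of (\ref{eq3}) and (\ref{eq4}).

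I do not anticipate a genuine obstacle here, since the argument is structurally identical to Lemma \ref{0+entries}; the only thing to be careful about is bookkeeping — making sure to invoke Lemma \ref{F+F-var}(vi) rather than (iii), and to read off the correct (transposed) entries of the structure-constant table, namely $(\mathcal{A}^{-}_{xy},\mathcal{A}^{0}_{xy})\mapsto (q-1)[i]$ and $(\mathcal{A}^{0}_{xy},\mathcal{A}^{-}_{xy})\mapsto q^{i+1}[k-i]$. One should also note that $w'\in\mathcal{A}^{-}_{xy}$ forces $1<\partial(w',y)<k$, which is what legitimizes applying Lemma \ref{F+F-var} and the auxiliary double-counting arguments underlying (\ref{00-}) and (\ref{0--}).
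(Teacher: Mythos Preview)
Your proposal is correct and follows the paper's own proof essentially line for line; the paper uses the variable names $\gamma,\delta$ in place of your $\alpha,\beta$ (since $\alpha,\beta$ were already used in Lemma~\ref{0+entries} and are later referenced in Lemma~\ref{entries}), but the argument---setting up the two unknowns via (\ref{00-}) and (\ref{0--}), evaluating (\ref{ff0eq}) at $(w',x)$ using Lemma~\ref{F+F-var}(vi), evaluating $F^{0}F^{-}+F^{+}F^{-}+(F^{-})^2=FF^{-}$ at $(w,x)$ using Lemma~\ref{F+F-}, and solving the resulting $2\times 2$ system---is identical.
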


\begin{proof}
    Write 
    \begin{equation}
    \label{gammadelta}
        \gamma=\bigl(F^0F^-\bigr)_{w,x}, \qquad \qquad \delta=\Bigl(\bigl(F^-\bigr)^2\Bigr)_{w,x}.
    \end{equation}
    By Lemma \ref{commute}, $\bigl(F^-F^0\bigr)_{w,x}=\gamma$. By (\ref{00-}), 
    \begin{equation}
    \label{f00''}
        \Bigl(\bigl(F^0\bigr)^2\Bigr)_{w',x}=\frac{q^{i+1}[k-i]}{(q-1)[i]}\gamma.
    \end{equation}
    By Lemma \ref{commute} and (\ref{0--}),
    \begin{equation}
    \label{f0-v2}
        \bigl(F^{0}F^{-}\bigr)_{w',x}=\bigl(F^{-}F^{0}\bigr)_{w',x}=\frac{(q-1)[i]}{q^{i+1}[k-i]}\delta.
    \end{equation}
    It suffices to show that $\gamma=0$ and $\delta=q^{i+1}[k-i]$. We calculate the $(w',x)$-entry of each term in (\ref{ff0eq}). By Lemma \ref{F+F-var}(vi),
    \begin{equation}
    \label{f+0v2}
        \bigl(F^{+}F^{0}\bigr)_{w'x}=0.
    \end{equation}
    Observe that $\bigl(FF^{0}\bigr)_{w',x}$ counts the number of vertices $z\in \mathcal{A}_{xy}^{0}$ that are adjacent to $w'$. By the $(\mathcal{A}_{xy}^{-}, \mathcal{A}_{xy}^{0})$-entry of the table in Lemma \ref{structure}, 
    \begin{equation}
    \label{ff0v2}
        \bigl(FF^{0}\bigr)_{w',x}=(q-1)[i].
    \end{equation}
    Combine (\ref{ff0eq}), (\ref{f00''})--(\ref{ff0v2}) to obtain
    \begin{equation}
    \label{system3}
        \frac{q^{i+1}[k-i]}{(q-1)[i]}\gamma+\frac{(q-1)[i]}{q^{i+1}[k-i]}\delta=(q-1)[i].
    \end{equation}
    
    By (\ref{fdef}),
    \begin{equation}    
    \label{ff-eq}
    F^{0}F^{-}+F^{+}F^{-}+(F^{-})^2=FF^{-}.    
    \end{equation}
    
    We calculate the $(w,x)$-entry of each term in (\ref{ff-eq}). Observe that $\bigl(FF^{-}\bigr)_{w,x}$ counts the number of vertices $z\in \mathcal{A}_{xy}^{-}$ that are adjacent to $w$. Combining (\ref{gammadelta}), Lemma \ref{F+F-} and the $(\mathcal{A}^{0}_{xy},\mathcal{A}^{-}_{xy})$-entry of the table in Lemma \ref{structure}, we obtain
    \begin{equation}
    \label{system4}
        \gamma+\delta=q^{i+1}[k-i].
    \end{equation}
    Solve the system of equations (\ref{system3}), (\ref{system4}) to obtain the result.
\end{proof}

\begin{lemma}
\label{entries}
    Pick $x,y\in X$ such that $1<\partial(x,y)<k$. Write $i=\partial(x,y)$. Pick $w\in \mathcal{A}_{xy}$. The following (i)--(iii) hold.
    \begin{enumerate}[label=(\roman*)]
        \item Assume that $w\in \mathcal{A}_{xy}^{0}$. Then
        \begin{equation*}
            \Bigl(\bigl(F^0\bigr)^2\Bigr)_{w,x}=2q^{i}-q-2.
        \end{equation*}
        \item Assume that $w\in \mathcal{A}_{xy}^{+}$. Then
        \begin{equation*}
            \Bigl(\bigl(F^+\bigr)^2\Bigr)_{w,x}=q[n-k]-q^i-q.
        \end{equation*}
        \item  Assume that $w\in \mathcal{A}_{xy}^{-}$. Then
        \begin{equation*}
            \Bigl(\bigl(F^-\bigr)^2\Bigr)_{w,x}=q[k]-q^i-q.
        \end{equation*}
    \end{enumerate}
\end{lemma}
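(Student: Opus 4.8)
The plan is to handle the three cases uniformly. Fix $\epsilon\in\{0,+,-\}$, pick $w\in\mathcal{A}^{\epsilon}_{xy}$, and write $i=\partial(x,y)$, so that $\partial(w,y)=i$ as well. Since $F=F^{0}+F^{+}+F^{-}$ by Definition \ref{fadef} and the matrices $F^{0},F^{+},F^{-}$ mutually commute by Lemma \ref{commute}, we have $(F^{\epsilon})^{2}=FF^{\epsilon}-\sum_{\delta\neq\epsilon}F^{\delta}F^{\epsilon}$. The strategy is to evaluate the $(w,x)$-entry of both sides: the entry of $FF^{\epsilon}$ will be read off from the structure constants of Lemma \ref{structure}, while the entries of the cross terms $F^{\delta}F^{\epsilon}$ with $\delta\neq\epsilon$ have already been computed in Lemmas \ref{F+F-}, \ref{F+F-var}, \ref{0+entries}, \ref{0-entries}.

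First I would pin down the combinatorial meaning of $(FF^{\epsilon})_{w,x}$. By Remark \ref{remark2}, this entry equals $\sum_{z}F_{w,z}(F^{\epsilon})_{z,x}$, which counts the vertices $z$ adjacent to both $w$ and $x$ with $\partial(z,y)=\partial(w,y)$ and such that the edge $zx$ has type $\epsilon$. A type-$\epsilon$ edge $zx$ forces $\partial(z,y)=\partial(x,y)=i$, which is built into Definition \ref{type}, and by Remark \ref{remark} such $z$ are exactly the vertices of $\mathcal{A}^{\epsilon}_{xy}$; conversely any $z\in\mathcal{A}^{\epsilon}_{xy}$ adjacent to $w$ satisfies $\partial(z,y)=i=\partial(w,y)$, hence $F_{w,z}=1$. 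Therefore $(FF^{\epsilon})_{w,x}$ is the number of vertices of $\mathcal{A}^{\epsilon}_{xy}$ adjacent to $w$, that is, the $(\mathcal{A}^{\epsilon}_{xy},\mathcal{A}^{\epsilon}_{xy})$-entry of the table in Lemma \ref{structure}, which equals $2q^{i}-q-2$, $q[n-k]-q-1$, $q[k]-q-1$ for $\epsilon=0,+,-$ respectively.

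It then remains to insert the cross-term values and simplify. For (i), with $w\in\mathcal{A}^{0}_{xy}$, Lemma \ref{0+entries} gives $(F^{+}F^{0})_{w,x}=0$ and Lemma \ref{0-entries} gives $(F^{-}F^{0})_{w,x}=0$, so $\bigl((F^{0})^{2}\bigr)_{w,x}=(FF^{0})_{w,x}=2q^{i}-q-2$. For (ii), with $w\in\mathcal{A}^{+}_{xy}$, Lemma \ref{F+F-var}(i) gives $(F^{-}F^{+})_{w,x}=0$ and Lemma \ref{0+entries} gives $(F^{0}F^{+})_{w,x}=(q-1)[i]=q^{i}-1$, so $\bigl((F^{+})^{2}\bigr)_{w,x}=(FF^{+})_{w,x}-(q^{i}-1)=q[n-k]-q^{i}-q$. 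For (iii), with $w\in\mathcal{A}^{-}_{xy}$, Lemma \ref{F+F-} gives $(F^{+}F^{-})_{w,x}=0$ and Lemma \ref{0-entries} gives $(F^{0}F^{-})_{w,x}=(q-1)[i]=q^{i}-1$, so $\bigl((F^{-})^{2}\bigr)_{w,x}=(FF^{-})_{w,x}-(q^{i}-1)=q[k]-q^{i}-q$. These are the three asserted values.

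There is no serious obstacle: the lemma is essentially a repackaging of facts already established, and the only point requiring care is the combinatorial identification in the second paragraph — one must make sure that in the product $FF^{\epsilon}$ the factor $F$ contributes exactly the vertices of $\mathcal{A}^{\epsilon}_{xy}$ adjacent to $w$, which rests on the fact that a type-$\epsilon$ edge joins vertices equidistant from $y$. The remaining manipulations are just arithmetic with $[m]=(q^{m}-1)/(q-1)$.
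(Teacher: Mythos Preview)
Your proposal is correct and follows essentially the same approach as the paper: decompose $FF^{\epsilon}=(F^{\epsilon})^{2}+\sum_{\delta\neq\epsilon}F^{\delta}F^{\epsilon}$, read off $(FF^{\epsilon})_{w,x}$ from the diagonal entries of the structure-constant table in Lemma~\ref{structure}, and plug in the already-computed cross-term values. The paper proves part (i) exactly this way (phrasing the cross terms as $\alpha$ and $\gamma$, shown to vanish in Lemmas~\ref{0+entries} and~\ref{0-entries}) and declares (ii), (iii) similar; your write-up simply carries out (ii) and (iii) explicitly with the correct arithmetic $(q-1)[i]=q^{i}-1$.
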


\begin{proof}
    We first prove (i). Pick $w\in \mathcal{A}^{0}_{xy}$. Recall $\alpha,\gamma$ from (\ref{alphabeta}), (\ref{gammadelta}). Observe that $\Bigl(\bigl(FF^0\bigr)^2\Bigr)_{w,x}$ counts the number of vertices $z\in \mathcal{A}^{0}_{xy}$ adjacent to $w$. Evaluate the $(w,x)$-entries for both sides of (\ref{ff0eq}) using the left equations of (\ref{alphabeta}), (\ref{gammadelta}) and the $(\mathcal{A}^{0}_{xy},\mathcal{A}^{0}_{xy})$-entry of the table in Lemma \ref{structure}; we get 
    \begin{equation*}
        \Bigl(\bigl(F^0\bigr)^2\Bigr)_{w,x}+\alpha+\gamma=2q^i-q-2.
    \end{equation*}
    The result follows from the fact that $\alpha=0$ and $\gamma=0$.

    We have proved (i). Proofs for (ii), (iii) are similar, and omitted.
    \end{proof}
    We summarize the results of this section in the theorem below.

    \begin{theorem}
    \label{entrytheorem}
        Pick $x,y\in X$ such that $1<\partial(x,y)<k$. Pick $w\in \mathcal{A}_{xy}$. Referring to the table below, for each matrix $M$ in the header column and each orbit $\mathcal{O}$ in the header row, the $(M,\mathcal{O})$-entry gives $M_{w,x}$ when $w\in \mathcal{O}$. Write $i=\partial(x,y)$.

        \begin{center}
            \begin{tabular}{c|c c c}
                & $\mathcal{A}^{0}_{xy}$ & $\mathcal{A}^{+}_{xy}$ & $\mathcal{A}^{-}_{xy}$ \\
                \hline \\
                $\bigl(F^{0}\bigr)^2$ & $2q^i-q-2$ & $0$ & $0$ \\ \\
                $F^0F^+$ & $0$ & $(q-1)[i]$ & $0$ \\ \\
                $F^0F^-$ & $0$ & $0$ & $(q-1)[i]$ \\ \\
                $F^+F^0$ & $0$ & $(q-1)[i]$ & $0$ \\ \\
                $\bigl(F^+\bigr)^2$ & $q^{i+1}[n-k-i]$ & $q[n-k]-q^i-q$ & $0$ \\ \\
                $F^+F^-$ & $0$ & $0$ & $0$\\ \\
                $F^-F^0$ & $0$ & $0$ & $(q-1)[i]$ \\ \\
                $F^-F^+$ & $0$ & $0$ & $0$ \\ \\
                $\bigl(F^-\bigr)^2$ & $q^{i+1}[k-i]$ & $0$ & $q[k]-q^i-q$
            \end{tabular}
        \end{center}
    \end{theorem}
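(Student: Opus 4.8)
The plan is to obtain the table by collecting, cell by cell, the results already proved in this section, invoking the commutativity of $F^{0},F^{+},F^{-}$ from Lemma \ref{commute} whenever a value computed for a product $F^{a}F^{b}$ has to be transferred to $F^{b}F^{a}$. No new computation is needed; the only real work is to check that the bookkeeping is complete, that is, that all twenty-seven cells of the table are accounted for.

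First I would dispose of the two rows that vanish identically on $\mathcal{A}_{xy}$: the row $F^{+}F^{-}$ is zero by Lemma \ref{F+F-}, and the row $F^{-}F^{+}$ is zero by Lemma \ref{F+F-var}(i). Next I would fill in the three ``diagonal'' cells: $\bigl(F^{0}\bigr)^{2}$ on $\mathcal{A}^{0}_{xy}$, $\bigl(F^{+}\bigr)^{2}$ on $\mathcal{A}^{+}_{xy}$, and $\bigl(F^{-}\bigr)^{2}$ on $\mathcal{A}^{-}_{xy}$; these are exactly parts (i), (ii), (iii) of Lemma \ref{entries}. The two mixed blocks come next: the cells in columns $\mathcal{A}^{0}_{xy}$ and $\mathcal{A}^{+}_{xy}$ for the rows $\bigl(F^{0}\bigr)^{2}$, $F^{0}F^{+}$, $F^{+}F^{0}$, $\bigl(F^{+}\bigr)^{2}$ are precisely (\ref{eq1}) and (\ref{eq2}) of Lemma \ref{0+entries}, and the analogous cells with $F^{+}$ replaced by $F^{-}$ are (\ref{eq3}) and (\ref{eq4}) of Lemma \ref{0-entries}. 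Finally, the remaining ``cross'' cells record that the type-$+$ data vanishes on $\mathcal{A}^{-}_{xy}$ and the type-$-$ data vanishes on $\mathcal{A}^{+}_{xy}$: for $w\in\mathcal{A}^{+}_{xy}$ the $(w,x)$-entries of $F^{0}F^{-}$, $F^{-}F^{0}$, $\bigl(F^{-}\bigr)^{2}$ are $0$, and for $w\in\mathcal{A}^{-}_{xy}$ the $(w,x)$-entries of $F^{0}F^{+}$, $F^{+}F^{0}$, $\bigl(F^{+}\bigr)^{2}$ are $0$; these six statements are Lemma \ref{F+F-var}(ii)--(vii). Since every cell has now been matched with a previously established statement, the theorem follows.

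I do not expect any genuine obstacle here: the step requiring the most care is simply verifying completeness of the case analysis, in particular remembering to use Lemma \ref{commute} to pass between $F^{a}F^{b}$ and $F^{b}F^{a}$, and keeping track of which of $\mathcal{A}^{0}_{xy},\mathcal{A}^{+}_{xy},\mathcal{A}^{-}_{xy}$ the vertex $w$ belongs to when reading off each entry.
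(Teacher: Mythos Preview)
Your proposal is correct and follows exactly the paper's approach: the paper's proof is the single sentence ``Combine Lemmas \ref{F+F-}, \ref{F+F-var}, \ref{0+entries}, \ref{0-entries}, \ref{entries},'' and your write-up is a faithful cell-by-cell unpacking of that sentence. The only minor remark is that invoking Lemma \ref{commute} separately is unnecessary, since Lemmas \ref{0+entries} and \ref{0-entries} already state both orderings $F^{a}F^{b}$ and $F^{b}F^{a}$ explicitly.
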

    \begin{proof}
        Combine Lemmas \ref{F+F-}, \ref{F+F-var}, \ref{0+entries}, \ref{0-entries}, \ref{entries}.
    \end{proof}
\section{More results on the edges of different types in $\Gamma(x)$}
\label{typeII}
Pick $x,y\in X$ such that $1<\partial(x,y)<k$. Pick a pair of orbits $\mathcal{O}\in \{\mathcal{A}_{xy}^{0}, \mathcal{A}_{xy}^{+}, \mathcal{A}_{xy}^{-}\}$ and $\mathcal{N}$ from (\ref{5orbits}). Pick $w\in \mathcal{O}$. In this section we find the number of vertices $z\in \mathcal{N}$ adjacent to $w$ such that the edge $wz$ has (i) type $0$, (ii) type $+$, (iii) type $-$.

\begin{lemma}
    Pick $x,y\in X$ such that $1<\partial(x,y)<k$. For $w\in \mathcal{A}_{xy}$ and $z\in \mathcal{B}_{xy}\cup \mathcal{C}_{xy}$ that are adjacent, the edge $wz$ does not have type $0$, $+$, nor $-$. 
\end{lemma}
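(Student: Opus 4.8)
The plan is to argue exactly as in the proofs of the two analogous lemmas preceding Theorems \ref{I1} and \ref{I2}, using only the distance bookkeeping built into Definition \ref{type}. The crucial point is that, by Definition \ref{type}, an edge $wz$ in $\Gamma(x)$ is assigned one of the three types $0$, $+$, $-$ only when $\partial(w,y)=\partial(z,y)$; so it suffices to check that this equality fails for the pairs $(w,z)$ under consideration.

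First I would record the relevant distances. Since $w\in \mathcal{A}_{xy}$, the definition of $\mathcal{A}_{xy}$ gives $\partial(w,y)=\partial(x,y)$. Next, if $z\in \mathcal{B}_{xy}$ then $\partial(z,y)=\partial(x,y)+1$, while if $z\in \mathcal{C}_{xy}$ then $\partial(z,y)=\partial(x,y)-1$; in either case $\partial(z,y)\neq \partial(x,y)=\partial(w,y)$. Hence $\partial(w,y)\neq \partial(z,y)$, so the hypothesis needed for $wz$ to have any of the three types is violated, and the edge $wz$ has none of type $0$, $+$, or $-$.

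There is essentially no obstacle: the statement is a direct consequence of the distance conditions defining $\mathcal{A}_{xy}$, $\mathcal{B}_{xy}$, $\mathcal{C}_{xy}$ together with the requirement $\partial(w,y)=\partial(z,y)$ implicit in Definition \ref{type}. The only mild care needed is to note that the union $\mathcal{B}_{xy}\cup \mathcal{C}_{xy}$ should be handled by a short case split (two cases, corresponding to the $+1$ and $-1$ shifts), after which both cases conclude identically. This lemma then serves, as in Section \ref{types}, to justify restricting attention in Section \ref{typeII} to adjacent pairs $w,z$ with $w\in \mathcal{A}_{xy}$ and $z\in \mathcal{A}_{xy}$ when counting edges of the three types.
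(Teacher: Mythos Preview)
Your proposal is correct and follows essentially the same approach as the paper's proof: both argue that $w\in\mathcal{A}_{xy}$ forces $\partial(w,y)=\partial(x,y)$ while $z\in\mathcal{B}_{xy}\cup\mathcal{C}_{xy}$ forces $\partial(z,y)\neq\partial(x,y)$, so the equal-distance hypothesis in Definition~\ref{type} fails. The paper simply combines your two cases into the single observation $\partial(z,y)\neq\partial(x,y)$, but the content is identical.
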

\begin{proof}
    Since $w\in \mathcal{A}_{xy}$, we have $\partial(w,y)=\partial(x,y)$. Since $z\in \mathcal{B}_{xy}\cup \mathcal{C}_{xy}$, we have $\partial(z,y)\neq \partial(x,y)$. Hence, $\partial(w,y)\neq \partial(z,y)$. The result follows.
\end{proof}

\begin{theorem}
\label{II1}
    Pick $x,y\in X$ such that $1<\partial(x,y)<k$. Pick $w\in \mathcal{A}^{0}_{xy}$. Referring to the table below, for each orbit $\mathcal{N}$ in the header row, and each type $t$ in the header column, the $(t,\mathcal{N})$-entry gives the number of vertices $z\in \mathcal{N}$ adjacent to $w$ such that $wz$ has the type $t$. Write $i=\partial(x,y)$.
    
    \begin{center}
    \begin{tabular}{c|c c c}
        Type & $\mathcal{A}^{0}_{xy}$ & $\mathcal{A}^{+}_{xy}$ & $\mathcal{A}^{-}_{xy}$\\
        \hline \\
        $0$ & $2q^i-q-2$ & $0$ & $0$\\ \\ 
        $+$ & $0$ & $q^{i+1}[n-k-i]$ & $0$\\ \\
        $-$ & $0$ & $0$ & $q^{i+1}[k-i]$
    \end{tabular}
    \end{center}
\end{theorem}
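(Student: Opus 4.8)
The plan is to recognize each of the nine entries of the table as a $(w,x)$-entry of a product $F^{s}F^{t}$ with $s,t\in\{0,+,-\}$, and then to read off its value directly from Theorem \ref{entrytheorem}.

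First I would fix $w\in\mathcal{A}^{0}_{xy}$ and $s,t\in\{0,+,-\}$. By Remark \ref{remark2}, for $z\in\Gamma(x)$ the entry $\bigl(F^{t}\bigr)_{z,x}$ equals $1$ when the edge $zx$ has type $t$ and $0$ otherwise, and by Remark \ref{remark} (applied to the pair $z,x$, both at distance $\partial(x,y)$ from $y$) this happens exactly when $z\in\mathcal{A}^{t}_{xy}$; likewise $\bigl(F^{s}\bigr)_{w,z}=1$ exactly when $w,z$ are adjacent with the edge $wz$ of type $s$, and $\bigl(F^{s}\bigr)_{w,z}=0$ whenever $w,z$ are non-adjacent or $\partial(w,y)\neq\partial(z,y)$. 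Since every $z\in\mathcal{A}^{t}_{xy}$ lies in $\Gamma(x)$ and satisfies $\partial(z,y)=\partial(x,y)=\partial(w,y)$, in the sum
\[
  \bigl(F^{s}F^{t}\bigr)_{w,x}=\sum_{z}\bigl(F^{s}\bigr)_{w,z}\bigl(F^{t}\bigr)_{z,x}
\]
the only possibly nonzero terms come from $z\in\mathcal{A}^{t}_{xy}$ adjacent to $w$, and such a $z$ contributes $1$ precisely when the edge $wz$ has type $s$. Hence $\bigl(F^{s}F^{t}\bigr)_{w,x}$ is the number of $z\in\mathcal{A}^{t}_{xy}$ adjacent to $w$ for which $wz$ has type $s$, which is exactly the $(\text{type }s,\mathcal{A}^{t}_{xy})$-entry of the desired table.

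It then remains to evaluate $\bigl(F^{s}F^{t}\bigr)_{w,x}$ for $w\in\mathcal{A}^{0}_{xy}$ in the nine cases, and these are precisely the entries of the $\mathcal{A}^{0}_{xy}$ column of the table in Theorem \ref{entrytheorem}: $\bigl(F^{0}\bigr)^{2}\mapsto 2q^{i}-q-2$, $\bigl(F^{+}\bigr)^{2}\mapsto q^{i+1}[n-k-i]$, $\bigl(F^{-}\bigr)^{2}\mapsto q^{i+1}[k-i]$, and each of the six mixed products $F^{0}F^{+},F^{+}F^{0},F^{0}F^{-},F^{-}F^{0},F^{+}F^{-},F^{-}F^{+}$ maps to $0$. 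Substituting these values yields the table. There is no real computational obstacle, since Theorem \ref{entrytheorem} already supplies the hard input; the only step requiring care is the bookkeeping in the previous paragraph, namely confirming that the matrix product $F^{s}F^{t}$ carries the claimed combinatorial meaning and that no spurious contributions arise from vertices non-adjacent to $w$ or at the wrong distance from $y$, both of which are controlled by Remark \ref{remark2} together with the inclusion $\mathcal{A}^{t}_{xy}\subseteq\Gamma(x)$. As a consistency check one may note that summing the three type-counts in each column recovers the corresponding $(\mathcal{A}^{0}_{xy},\cdot)$-entry of the table in Lemma \ref{structure}, in view of the last sentence of Definition \ref{type}.
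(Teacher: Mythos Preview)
Your proposal is correct and follows the same approach as the paper: the paper's proof is simply ``Immediate from column $\mathcal{A}_{xy}^{0}$ of the table in Theorem \ref{entrytheorem},'' and you have spelled out in detail why that column carries the desired combinatorial meaning via Remarks \ref{remark} and \ref{remark2}.
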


\begin{proof}
    Immediate from the $\mathcal{A}_{xy}^{0}$-column of the table in Theorem \ref{entrytheorem}.
\end{proof}

\begin{theorem}
\label{III1}
    Pick $x,y\in X$ such that $1<\partial(x,y)<k$. Pick $w\in \mathcal{A}^{+}_{xy}$. Referring to the table below, for each orbit $\mathcal{N}$ in the header row, and each type $t$ in the header column, the $(t,\mathcal{N})$-entry gives the number of vertices $z\in \mathcal{N}$ adjacent to $w$ such that $wz$ has the type $t$. Write $i=\partial(x,y)$.
    
    \begin{center}
    \begin{tabular}{c|c c c}
        Type & $\mathcal{A}^{0}_{xy}$ & $\mathcal{A}^{+}_{xy}$ & $\mathcal{A}^{-}_{xy}$\\
        \hline \\
        $0$ & $0$ & $(q-1)[i]$ & $0$ \\ \\ 
        $+$ & $(q-1)[i]$ & $q[n-k]-q^i-q$ & $0$\\ \\
        $-$ & $0$ & $0$ & $0$ 
    \end{tabular}
    \end{center}
\end{theorem}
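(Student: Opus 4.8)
The plan is to realize each of the nine numbers in the table as the $(w,x)$-entry of a product $F^{t}F^{s}$ with $t,s\in\{0,+,-\}$, and then to read off its value from the $\mathcal{A}^{+}_{xy}$ column of the table in Theorem \ref{entrytheorem}; this is the same strategy used for Theorem \ref{II1}.

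First I would record the following bookkeeping fact: for $s\in\{0,+,-\}$ and a type $t\in\{0,+,-\}$, the number of vertices $z\in\mathcal{A}^{s}_{xy}$ adjacent to $w$ such that the edge $wz$ has type $t$ equals $\bigl(F^{t}F^{s}\bigr)_{w,x}$. Indeed $\bigl(F^{t}F^{s}\bigr)_{w,x}=\sum_{z}\bigl(F^{t}\bigr)_{w,z}\bigl(F^{s}\bigr)_{z,x}$, the sum being over $z\in P$; a nonzero off-diagonal entry of $F^{0}$, $F^{+}$, or $F^{-}$ forces its two indices to be adjacent vertices of $\Gamma$ lying at equal distance from $y$, so by Remark \ref{remark2} the factor $\bigl(F^{s}\bigr)_{z,x}$ equals $1$ precisely when $z\in\mathcal{A}^{s}_{xy}$, and for such $z$ the factor $\bigl(F^{t}\bigr)_{w,z}$ equals $1$ precisely when the edge $wz$ has type $t$. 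In particular $z$ automatically ranges over $\mathcal{A}_{xy}$, so no vertex of $\mathcal{B}_{xy}\cup\mathcal{C}_{xy}$ contributes, in agreement with the lemma preceding Theorem \ref{II1}.

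Then I would match the nine pairs $(t,s)$ with the rows of Theorem \ref{entrytheorem}: $(0,0)$ with $(F^{0})^{2}$; $(0,+)$ with $F^{0}F^{+}$; $(0,-)$ with $F^{0}F^{-}$; $(+,0)$ with $F^{+}F^{0}$; $(+,+)$ with $(F^{+})^{2}$; $(+,-)$ with $F^{+}F^{-}$; $(-,0)$ with $F^{-}F^{0}$; $(-,+)$ with $F^{-}F^{+}$; and $(-,-)$ with $(F^{-})^{2}$. Since $w\in\mathcal{A}^{+}_{xy}$, evaluating these at $(w,x)$ using the $\mathcal{A}^{+}_{xy}$ column of the table in Theorem \ref{entrytheorem} produces exactly the nine entries in the statement. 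No genuine obstacle remains: all the substantive counting was done in Section \ref{entry}, and the only point requiring care is getting the order of the two factors $F^{t}$, $F^{s}$ right in the bookkeeping step.
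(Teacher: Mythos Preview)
Your proposal is correct and follows exactly the paper's approach: the paper's proof of this theorem is the single line ``Immediate from column $\mathcal{A}_{xy}^{+}$ of the table in Theorem \ref{entrytheorem},'' and you have simply unpacked what ``immediate'' means by spelling out the bookkeeping identity $(F^{t}F^{s})_{w,x}=\#\{z\in\mathcal{A}^{s}_{xy}:z\sim w,\ \text{edge }wz\text{ has type }t\}$ that the paper leaves implicit.
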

\begin{proof}
    Immediate from the $\mathcal{A}_{xy}^{+}$-column of the table in Theorem \ref{entrytheorem}.
\end{proof}

\begin{theorem}
\label{III2}
    Pick $x,y\in X$ such that $1<\partial(x,y)<k$. Pick $w\in \mathcal{A}^{-}_{xy}$. Referring to the table below, for each orbit $\mathcal{N}$ in the header row, and each type $t$ in the header column, the $(t,\mathcal{N})$-entry gives the number of vertices $z\in \mathcal{N}$ adjacent to $w$ such that $wz$ has the type $t$. Write $i=\partial(x,y)$.
    
    \begin{center}
    \begin{tabular}{c|c c c}
        Type & $\mathcal{A}^{0}_{xy}$ & $
        \mathcal{A}^{+}_{xy}$ & $\mathcal{A}^{-}_{xy}$\\
        \hline \\
        $0$ & $0$ & $0$& $(q-1)[i]$\\ \\ 
        $+$ & $0$ & $0$ & $0$\\ \\
        $-$ & $(q-1)[i]$ & $0$ 
        & $q[k]-q^i-q$
    \end{tabular}
    \end{center}
\end{theorem}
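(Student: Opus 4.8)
The plan is to read each of the nine required counts as the $(w,x)$-entry of a product of two matrices drawn from $\{F^{0},F^{+},F^{-}\}$, and then simply quote the column of Theorem~\ref{entrytheorem} indexed by $\mathcal{A}^{-}_{xy}$. First I would record the bookkeeping. Fix a vertex $w\in\mathcal{A}^{-}_{xy}$ and fix types $t_{1},t_{2}\in\{0,+,-\}$. By Remark~\ref{remark2}, for any $z\in P$ the entry $(F^{t_{1}})_{w,z}$ equals $1$ exactly when $w$ and $z$ are adjacent and the edge $wz$ has type $t_{1}$, and equals $0$ otherwise; in particular $(F^{t_{1}})_{w,z}=1$ forces $z$ to be a $k$-subspace adjacent to $w$ with $z\neq w$. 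Likewise the $x$-column of $F^{t_{2}}$ is the indicator of $\mathcal{A}^{t_{2}}_{xy}$, so $(F^{t_{2}})_{z,x}=1$ iff $z\in\mathcal{A}^{t_{2}}_{xy}$, which forces $z\in\Gamma(x)$ with $z\neq x$. Hence
\[
\bigl(F^{t_{1}}F^{t_{2}}\bigr)_{w,x}=\sum_{z\in P}\bigl(F^{t_{1}}\bigr)_{w,z}\bigl(F^{t_{2}}\bigr)_{z,x}
\]
counts precisely the vertices $z\in\mathcal{A}^{t_{2}}_{xy}$ adjacent to $w$ for which the edge $wz$ has type $t_{1}$; this is exactly the $(t_{1},\mathcal{A}^{t_{2}}_{xy})$-cell of the asserted table.

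With this identification in hand, the proof is a substitution. Taking $w\in\mathcal{A}^{-}_{xy}$ and reading the $\mathcal{A}^{-}_{xy}$-column of the table in Theorem~\ref{entrytheorem}, one has $\bigl((F^{0})^{2}\bigr)_{w,x}=\bigl(F^{0}F^{+}\bigr)_{w,x}=\bigl(F^{+}F^{0}\bigr)_{w,x}=\bigl((F^{+})^{2}\bigr)_{w,x}=\bigl(F^{+}F^{-}\bigr)_{w,x}=\bigl(F^{-}F^{+}\bigr)_{w,x}=0$, together with $\bigl(F^{0}F^{-}\bigr)_{w,x}=\bigl(F^{-}F^{0}\bigr)_{w,x}=(q-1)[i]$ and $\bigl((F^{-})^{2}\bigr)_{w,x}=q[k]-q^{i}-q$. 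Matching these values against the nine pairs $(t_{1},t_{2})$ via the displayed identity produces every entry in the $\mathcal{A}^{0}_{xy}$-, $\mathcal{A}^{+}_{xy}$-, and $\mathcal{A}^{-}_{xy}$-columns of the table in the theorem.

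I do not anticipate any real obstacle: all the substantive work is already packaged in Section~\ref{entry}. The one point requiring a line of care is that the matrix-product count carries no spurious terms, and this is automatic because each of $F^{0},F^{+},F^{-}$ is a $(0,1)$-matrix supported on pairs of adjacent $k$-subspaces, so $(F^{t_{2}})_{z,x}=1$ already pins $z$ to an element of $\Gamma(x)\setminus\{x\}$ and $(F^{t_{1}})_{w,z}=1$ pins it further to a neighbor of $w$ distinct from $w$. (Should the $\mathcal{B}_{xy}$- and $\mathcal{C}_{xy}$-columns also be wanted, they vanish by the lemma preceding Theorem~\ref{II1}: an edge $wz$ of any type forces $\partial(w,y)=\partial(z,y)$, whereas $z\in\mathcal{B}_{xy}\cup\mathcal{C}_{xy}$ gives $\partial(z,y)\neq\partial(x,y)=\partial(w,y)$.)
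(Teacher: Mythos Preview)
Your proposal is correct and follows exactly the route the paper takes: the paper's proof is the single line ``Immediate from column $\mathcal{A}_{xy}^{-}$ of the table in Theorem~\ref{entrytheorem}.'' Your additional paragraph making explicit why $(F^{t_{1}}F^{t_{2}})_{w,x}$ counts the vertices $z\in\mathcal{A}^{t_{2}}_{xy}$ adjacent to $w$ with $wz$ of type $t_{1}$ is sound and simply unpacks what the paper leaves implicit.
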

\begin{proof}
    Immediate from the $\mathcal{A}_{xy}^{-}$-column of the table in Theorem \ref{entrytheorem}.
\end{proof}

We summarize the results from Sections \ref{types}, \ref{typeII} in the table below. Pick $x,y\in X$ such that $1<\partial(x,y)<k$. For each orbit $\mathcal{O}$ in the header column, and each orbit $\mathcal{N}$ in the header row, the $(\mathcal{O},\mathcal{N})$-entry gives a $3\times 1$-matrix that satisfies the following: each entry of the matrix corresponds to the number of vertices in $\mathcal{N}$ that share an edge of type $0$, $+$, $-$ respectively with a given vertex in $\mathcal{O}$. We omit the entries $(0,0,0)^{\top}$. Write $i=\partial(x,y)$.
    \begin{center}
\begin{tabular}{c|c c c c c}
     & $\mathcal{B}_{xy}$ & $\mathcal{C}_{xy}$ & $\mathcal{A}_{xy}^{0}$ & $\mathcal{A}_{xy}^{+}$ & $\mathcal{A}_{xy}^{-}$\\
    \hline \\
    $\mathcal{B}_{xy}$ & $\begin{pmatrix}
        \substack{2q^{i+1}-q-1}\\
        \substack{q^{i+2}[n-k-i-1]}\\
        \substack{q^{i+2}[k-i-1]}
    \end{pmatrix}$ &  &  &  & \\
   \\
    $\mathcal{C}_{xy}$ &  & $\begin{pmatrix}
        0\\
        q[i-1]\\
        q[i-1]
    \end{pmatrix}$ &  &  & \\
   \\
    $\mathcal{A}_{xy}^{0}$ &  &  & $\begin{pmatrix}
        2q^{i}-q-2\\0\\0
    \end{pmatrix}$ & $\begin{pmatrix}
        0\\q^{i+1}[n-k-i]\\0
    \end{pmatrix}$ & $\begin{pmatrix}
        0\\0\\q^{i+1}[k-i]
    \end{pmatrix}$ \\
    \\
    $\mathcal{A}_{xy}^{+}$ & & & $\begin{pmatrix}
        0\\(q-1)[i]\\0
    \end{pmatrix}$ & $\begin{pmatrix}
       (q-1)[i]\\ q[n-k]-q^i-q \\0
    \end{pmatrix}$ & \\
    \\
    $\mathcal{A}_{xy}^{-}$ & & & $\begin{pmatrix}
        0\\0\\(q-1)[i]
    \end{pmatrix}$ & & $\begin{pmatrix}
       (q-1)[i] \\0\\ q[k]-q^i-q
    \end{pmatrix}$\\
\end{tabular}
\end{center}

\section{Appendix}
\label{appendix}
In this section, we recall from \cite{Watanabe} the relations between the generators of $\mathcal{H}$.

\begin{lemma}{\rm{\cite[Lemma~7.4]{Watanabe}}}
\label{appendix1}
    The following (i)--(viii) hold:
    \begin{enumerate}[label=(\roman*)]
        \item $K_1L_1=qL_1K_1$;
        \item $K_1L_2=L_2K_1$;
        \item $qK_1R_1=R_1K_1$;
        \item $K_1R_2=R_2K_1$;
        \item $K_2L_1=L_1K_2$;
        \item $qK_2L_2=L_2K_2$;
        \item $K_2R_1=R_1K_2$;
        \item $K_2R_2=qR_2K_2$.
    \end{enumerate}
\end{lemma}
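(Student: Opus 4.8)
The plan is to verify the eight relations by a direct entrywise computation, using that $K_1$ and $K_2$ are diagonal together with the description of the covering relation in Lemma~\ref{coverlem} and Definition~\ref{slashcover}. First I would reduce to the four relations (i), (ii), (v), (vi). Since $K_1,K_2$ are diagonal they equal their own transposes, and by Definition~\ref{l1l2r1r2def} we have $R_1=L_1^{t}$ and $R_2=L_2^{t}$. Taking transposes of (i), (ii), (v), (vi) and using $(AB)^{t}=B^{t}A^{t}$ turns them into (iii), (iv), (vii), (viii) respectively; for instance transposing $K_1L_1=qL_1K_1$ gives $R_1K_1=qK_1R_1$, which is (iii), and transposing $qK_2L_2=L_2K_2$ gives $K_2R_2=qR_2K_2$, which is (viii). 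So it is enough to handle the $L$-relations.

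Next I would fix $u,v\in P$ and compare the $(u,v)$-entries of the two sides of each $L$-relation. Because $K_1$ is diagonal, $(K_1L_b)_{u,v}=(K_1)_{u,u}(L_b)_{u,v}$ and $(L_bK_1)_{u,v}=(L_b)_{u,v}(K_1)_{v,v}$ for $b\in\{1,2\}$, and similarly for $K_2$. Both sides vanish unless $(L_b)_{u,v}=1$, so I may assume that $v$ $\slash$-covers $u$ (when $b=1$) or $v$ $\backslash$-covers $u$ (when $b=2$); write $u\in P_{i,j}$. By Definition~\ref{slashcover}, if $v$ $\slash$-covers $u$ then $v\in P_{i+1,j}$, whereas if $v$ $\backslash$-covers $u$ then $v\in P_{i,j+1}$. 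Reading off the exponents from Definition~\ref{kdef}: along a $\slash$-cover the $K_1$-exponent $\tfrac{k}{2}-i$ decreases by $1$ and the $K_2$-exponent $j-\tfrac{n-k}{2}$ is unchanged, so $(K_1)_{u,u}=q\,(K_1)_{v,v}$ and $(K_2)_{u,u}=(K_2)_{v,v}$; along a $\backslash$-cover the $K_1$-exponent is unchanged and the $K_2$-exponent increases by $1$, so $(K_1)_{u,u}=(K_1)_{v,v}$ and $(K_2)_{v,v}=q\,(K_2)_{u,u}$. Substituting these into the entrywise identities above yields $K_1L_1=qL_1K_1$, $K_1L_2=L_2K_1$, $K_2L_1=L_1K_2$, and $qK_2L_2=L_2K_2$, i.e.\ (i), (ii), (v), (vi).

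There is no real obstacle here; the computation is routine once the set-up is in place. The only points requiring care are keeping track of which of $u,v$ has the larger diagonal exponent --- hence on which side of the relation the factor $q$ appears --- and correctly pairing each $R$-relation with its transposed $L$-relation. In the write-up I would give the $\slash$-cover cases $K_1L_1$ and $K_2L_1$ in detail, note that the $\backslash$-cover cases $K_1L_2$ and $K_2L_2$ are obtained by interchanging the roles of the two exponents, and close with the transpose argument for (iii), (iv), (vii), (viii).
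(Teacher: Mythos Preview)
Your argument is correct: the transpose reduction from (iii), (iv), (vii), (viii) to (i), (ii), (v), (vi) is valid since $K_1,K_2$ are diagonal and $R_b=L_b^{t}$, and the entrywise verification of the four $L$-relations using Definition~\ref{kdef} and Definition~\ref{slashcover} goes through exactly as you describe. The paper itself gives no proof of this lemma; it is simply quoted from \cite[Lemma~7.4]{Watanabe}, so there is nothing to compare your approach against beyond noting that your direct computation is the standard way to establish such commutation relations.
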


\begin{lemma}{\rm{\cite[Lemma~7.5]{Watanabe}}}
\label{appendix2}
    The following (i)--(iv) hold:
    \begin{enumerate}[label=(\roman*)]
        \item $L_1R_2=R_2L_1$;
        \item $L_2R_1=R_1L_2$;
        \item $qL_1L_2=L_2L_1$;
        \item $R_1R_2=qR_2R_1$.
    \end{enumerate}
\end{lemma}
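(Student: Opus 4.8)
The plan is to establish each of (i)--(iv) by a direct comparison of $(u,v)$-entries: for a product of two of the generators, the $(u,v)$-entry counts the subspaces $w\in P$ satisfying a pair of cover conditions, and one then rewrites those conditions in terms of the dimensions $\dim w$ and $\dim(w\cap y)$ using the grading of $P$ by the sets $P_{i,j}$ together with Lemma \ref{coverlem}. Since $R_1=L_1^{t}$ and $R_2=L_2^{t}$, taking matrix transposes turns (i) into (ii) and (iii) into (iv); so it suffices to prove (i) and (iii).

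For (i): the entry $(L_1R_2)_{u,v}$ counts the $w$ that $\slash$-cover $u$ and $\backslash$-cover $v$, while $(R_2L_1)_{u,v}$ counts the $w$ such that $u$ $\backslash$-covers $w$ and $v$ $\slash$-covers $w$. In the first case any such $w$ must equal $u+v$, which exists precisely when $\dim u=\dim v$ and $\dim(u\cap v)=\dim u-1$; in the second case any such $w$ must equal $u\cap v$, which requires the same two constraints. In both cases I would then check, using Lemma \ref{coverlem} together with the containments $v\cap y\subseteq u+v$ and $u\cap y\subseteq u$, that the cover-direction requirements hold if and only if $\dim(v\cap y)=\dim(u\cap y)+1$, and that once this holds the remaining direction condition is forced. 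Hence $(L_1R_2)_{u,v}=(R_2L_1)_{u,v}$ equals the indicator of the event $\dim u=\dim v$, $\dim(u\cap v)=\dim u-1$, $\dim(v\cap y)=\dim(u\cap y)+1$, which gives (i); transposing then gives (ii).

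For (iii): both $(L_1L_2)_{u,v}$ and $(L_2L_1)_{u,v}$ vanish unless $u\subsetneq v$ with $\dim v=\dim u+2$ and (applying Lemma \ref{coverlem} along a chain $u\subset w\subset v$) $\dim(v\cap y)=\dim(u\cap y)+1$. Assume these. The intermediate $w$ correspond bijectively to the $q+1$ lines of the $2$-dimensional quotient space $v/u$. The crux is the modular-law identity $w\cap\bigl(u+(v\cap y)\bigr)=u+(w\cap y)$, which shows that $\dim(w\cap y)-\dim(u\cap y)$ equals $1$ when the line $w/u$ equals the distinguished line $\bigl(u+(v\cap y)\bigr)/u$ and equals $0$ otherwise. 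Therefore $w$ $\slash$-covers $u$ for exactly one $w$ and $\backslash$-covers $u$ for the remaining $q$; in each case the required direction of the second cover (of $w$ by $v$) holds automatically once $\dim(v\cap y)=\dim(u\cap y)+1$, so $(L_1L_2)_{u,v}=1$ and $(L_2L_1)_{u,v}=q$. This proves $qL_1L_2=L_2L_1$, and transposing gives $R_1R_2=qR_2R_1$.

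I expect the main obstacle to be the bookkeeping in (iii): setting up the bijection between intermediate subspaces and lines of $v/u$, carrying out the modular-law computation cleanly, and verifying that once the line $w/u$ is fixed the second cover is automatically of the needed type, together with a tidy treatment of the degenerate cases in which the shape conditions on $u$ and $v$ fail. Since the lemma is quoted from \cite[Lemma~7.5]{Watanabe}, one may instead simply cite that source; the above is the self-contained route.
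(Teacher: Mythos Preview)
Your proposal is correct. The entry-by-entry argument works exactly as you describe: for (i) the unique candidate $w$ is $u+v$ on one side and $u\cap v$ on the other, and in both cases the $/$- and $\backslash$-cover conditions reduce, via Lemma~\ref{coverlem} and the obvious containments, to the single requirement $\dim(v\cap y)=\dim(u\cap y)+1$; for (iii) the modular-law identity $w\cap\bigl(u+(v\cap y)\bigr)=u+(w\cap y)$ pins down which of the $q+1$ intermediate lines $/$-covers $u$, and the degenerate cases $\dim(v\cap y)-\dim(u\cap y)\in\{0,2\}$ make both sides vanish. The transpose reductions $(\text{i})\Rightarrow(\text{ii})$ and $(\text{iii})\Rightarrow(\text{iv})$ are immediate.

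The paper, by contrast, does not prove this lemma at all: it is quoted verbatim from \cite[Lemma~7.5]{Watanabe} and appears in the appendix purely as a reference for the identities used elsewhere. So your route is genuinely different in that it supplies a self-contained elementary proof where the paper is content to cite the source. What your approach buys is independence from \cite{Watanabe}; what the citation buys is brevity, since the result is standard for this subspace-lattice setup.
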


\begin{lemma}{\rm{\cite[Lemma~7.6]{Watanabe}}}
\label{appendix3}
    The following (i)--(iv) hold.
    \begin{enumerate}[label=(\roman*)]
        \item $R_1^2L_1-(q+1)R_1L_1R_1+qL_1R_1^2=-q^{\frac{n}{2}-1}(q+1)K_1^{-1}K_2R_1$;
        \item $qR_2^2L_2-(q+1)R_2L_2R_2+L_2R_2^2=-q^{\frac{n}{2}}(q+1)K_1K_2^{-1}R_2$;
        \item $qL_1^2R_1-(q+1)L_1R_1L_1+R_1L_1^2=-q^{\frac{n}{2}}(q+1)K_1^{-1}K_2L_1$;
        \item $L_2^2R_2-(q+1)L_2R_2L_2+qR_2L_2^2=-q^{\frac{n}{2}-1}(q+1)K_1K_2^{-1}L_2$.
    \end{enumerate}
\end{lemma}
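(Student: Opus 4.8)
The plan is to verify each of (i)--(iv) as an identity of entries in $\mathrm{Mat}_P(\mathbb{C})$, after first using symmetry to cut the work down to a single one of them.

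First I would reduce to (i). Taking the matrix-transpose of (i) and using $R_1=L_1^{t}$, $R_2=L_2^{t}$ together with Lemma~\ref{appendix1}(i),(v) to move the diagonal factors past $L_1$, one recovers (iii) verbatim. For (ii) and (iv) I would use the order-reversing bijection $u\mapsto u^{\perp}$ of $P$ attached to a nondegenerate bilinear form on $V$: it sends $y$ to a complement of $y$, so conjugation by the associated permutation matrix interchanges $\slash$-covering with $\backslash$-covering and carries $R_1,L_1,K_1^{-1}K_2$ to $L_2,R_2,K_1K_2^{-1}$ (for the complementary marking). Applying this to (i) yields (iv), and transposing (iv) yields (ii). So it all comes down to proving (i).

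To prove (i) I would fix $u,v\in P$, write $u\in P_{i,j}$, and compare $(u,v)$-entries. Since a $\slash$-step changes $\dim(\,\cdot\,)$ and $\dim(\,\cdot\cap y)$ by the same amount, every term of $R_1^2L_1-(q+1)R_1L_1R_1+qL_1R_1^2$ and the right-hand side $-q^{\frac{n}{2}-1}(q+1)K_1^{-1}K_2R_1$ has $(u,v)$-entry $0$ unless $v\in P_{i-1,j}$, so I may assume that. Inspecting the intermediate subspaces occurring in the three products shows that the $(u,v)$-entry is forced to vanish except when either (a)~$v\subsetneq u$, in which case $u$ automatically $\slash$-covers $v$; or (b)~$v\not\subseteq u$ and $\dim(u\cap v)=\dim v-1$. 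In case (a) the right-hand side equals $-(q+1)q^{\dim u-1}$, and I would compute the three left-hand counts directly from Lemma~\ref{cover} and its two-step refinements (the numbers of $\slash$-$\slash$ chains $u\supset\,\cdot\,\supset\,\cdot\,$ and of subspaces $\slash$-covered by both $u$ and $v$) and check that they sum to this. In case (b) the right-hand side is $0$, so I must show that the three counts cancel; this part splits further according to the values of $\dim(u\cap v\cap y)$ and $\dim((u+v)\cap y)$.

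I expect the cancellation in case (b) to be the main obstacle, since the intermediate subspaces of the three products occupy slightly different positions relative to $u$ and $v$, and the attached powers of $q$ must be matched carefully across the sub-cases. A convenient way to keep this under control is to note that every configuration entering the computation lies in the interval of $P$ from $u\cap v$ to $u+v$, a projective geometry of dimension at most $3$; thus (i) is a finite identity verifiable inside $P_q(m)$ for small $m$, which is essentially the computation carried out in \cite[Section~7]{Watanabe}.
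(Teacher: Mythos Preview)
The paper does not prove this lemma; it is stated in the appendix purely as a citation of \cite[Lemma~7.6]{Watanabe}, with no accompanying argument. So there is no proof in the paper against which to compare your attempt.

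That said, your sketch is a sound route to a direct verification. The transpose reductions (i)$\Leftrightarrow$(iii) and (ii)$\Leftrightarrow$(iv) are clean and correct. The duality step is a little more delicate than you indicate: the map $u\mapsto u^{\perp}$ replaces the $y$-marking by the $y^{\perp}$-marking, which has dimension $n-k$ rather than $k$, so what you actually get is that (i) for the pair $(y^{\perp},n-k)$ yields (iv) for $(y,k)$. Since your entry-wise argument for (i) is uniform in $k$, this is harmless, but it should be said explicitly. Your case-(a) computation checks out (both sides come to $-(q+1)q^{i+j-1}$ when $u\in P_{i,j}$), and localizing case (b) to the interval $[u\cap v,\,u+v]$, a projective geometry of dimension at most $3$, is the right way to tame the remaining cancellation. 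The only genuine gap is that you defer the case-(b) bookkeeping to \cite{Watanabe}; that is fine for a sketch, but for a self-contained proof you would still need to run through the two sub-cases (according to whether $u\cap v\in P_{i-1,j-1}$ or $u\cap v\in P_{i-2,j}$).
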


\begin{lemma}{\rm{\cite[Lemma~7.7]{Watanabe}}}
\label{appendix4}
    The generators of $\mathcal{H}$ satisfy
    \begin{equation*}
        L_1R_1-R_1L_1+L_2R_2-R_2L_2=q^{\frac{n}{2}}(q-1)^{-1}(K_1K_2^{-1}-K_1^{-1}K_2).
    \end{equation*}
\end{lemma}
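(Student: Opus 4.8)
The plan is to verify the identity entry by entry. Both sides lie in $\text{Mat}_{P}(\mathbb{C})$, so it suffices to compare $(u,v)$-entries for all $u,v\in P$, using the combinatorial descriptions of $L_1,L_2,R_1,R_2,K_1,K_2$ (Definitions~\ref{kdef},~\ref{l1l2r1r2def}) together with the counting lemma (Lemma~\ref{cover}). For the diagonal, fix $u\in P_{i,j}$: reading off the matrix products, $(L_1R_1)_{u,u}$ counts the elements that $/$-cover $u$, $(R_1L_1)_{u,u}$ the elements that $u$ $/$-covers, $(L_2R_2)_{u,u}$ the elements that $\backslash$-cover $u$, and $(R_2L_2)_{u,u}$ the elements that $u$ $\backslash$-covers, so by Lemma~\ref{cover} the $(u,u)$-entry of the left-hand side is $[k-i]-q^j[i]+q^{k-i}[n-k-j]-[j]$. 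A short $q$-integer manipulation turns this into $(q-1)^{-1}(q^{\,n-i-j}-q^{\,i+j})$; and by Definition~\ref{kdef} the $(u,u)$-entries of $K_1K_2^{-1}$ and $K_1^{-1}K_2$ are $q^{\,n/2-i-j}$ and $q^{\,i+j-n/2}$, so the right-hand side of the lemma has $(u,u)$-entry $q^{n/2}(q-1)^{-1}(q^{\,n/2-i-j}-q^{\,i+j-n/2})$, which is the same.

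For the off-diagonal entries, the right-hand side is $0$ (as $K_1,K_2$ are diagonal), so for $u\neq v$ I would show that the $(u,v)$-entry of $L_1R_1-R_1L_1+L_2R_2-R_2L_2$ vanishes. A nonzero $(u,v)$-entry of $L_1R_1$ or $L_2R_2$ requires a common cover of $u$ and $v$ of dimension $\dim u+1$, forcing $\dim u=\dim v$ and identifying that cover as $u+v$; a nonzero $(u,v)$-entry of $R_1L_1$ or $R_2L_2$ requires a common sub-element of dimension $\dim u-1$, forcing $\dim u=\dim v$ and identifying it as $u\cap v$. By $\dim(u+v)+\dim(u\cap v)=\dim u+\dim v$, either all four entries are $0$ (nothing to prove), or $\dim u=\dim v$ and $\dim(u+v)=\dim u+1=\dim(u\cap v)+2$; only the latter case needs work. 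There $(L_1R_1)_{u,v}$ (resp.\ $(L_2R_2)_{u,v}$) is $1$ iff $u+v$ $/$-covers (resp.\ $\backslash$-covers) each of $u,v$, and $(R_1L_1)_{u,v}$ (resp.\ $(R_2L_2)_{u,v}$) is $1$ iff each of $u,v$ $/$-covers (resp.\ $\backslash$-covers) $u\cap v$.

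To finish I would set $a=\dim(u\cap v\cap y)$, $b=\dim(u\cap y)$, $c=\dim(v\cap y)$, $d=\dim((u+v)\cap y)$. Since $u\cap v\subseteq u\subseteq u+v$ with each step of codimension $1$, Lemma~\ref{coverlem} gives $b,c\in\{a,a+1\}$ and $d\in\{b,b+1\}\cap\{c,c+1\}$, and Definition~\ref{slashcover} rewrites the four indicators above as the conditions $d=b+1=c+1$; $d=b=c$; $b=c=a+1$; $b=c=a$. If $b\neq c$ all four fail; if $b=c$, then exactly one of the first two holds (as $d\in\{b,b+1\}$) and exactly one of the last two holds (as $b\in\{a,a+1\}$), so the alternating sum $(L_1R_1)_{u,v}-(R_1L_1)_{u,v}+(L_2R_2)_{u,v}-(R_2L_2)_{u,v}$ is $1-1=0$. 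I expect this last bookkeeping to be the only real obstacle: one must check carefully that the constraints on $b,c,d$ interact exactly as stated so that the contributions cancel in pairs in every admissible configuration; the diagonal computation and the reductions for the off-diagonal case are routine. (An alternative would be to derive the relation algebraically inside $\mathcal{H}$ from Lemmas~\ref{appendix1}--\ref{appendix3}, but the direct entry-wise verification seems cleanest and most self-contained.)
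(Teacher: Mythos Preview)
Your entry-wise verification is correct: the diagonal computation via Lemma~\ref{cover} reduces cleanly to $(q-1)^{-1}(q^{n-i-j}-q^{i+j})$, and your off-diagonal case analysis using $a,b,c,d$ is sound (in particular, when $b=c$ exactly one of $(L_1R_1)_{u,v},(L_2R_2)_{u,v}$ equals $1$ and exactly one of $(R_1L_1)_{u,v},(R_2L_2)_{u,v}$ equals $1$, so the alternating sum vanishes). The paper itself gives no proof at all---this lemma is simply quoted from \cite[Lemma~7.7]{Watanabe}---so your self-contained combinatorial argument goes beyond what the paper provides; it has the advantage of making the result independent of the external reference and of exhibiting directly why the four cover-counts interlock, at the modest cost of a short case check.
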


\section*{Acknowledgement}
The author would like to thank Paul Terwilliger for giving many valuable ideas and comments, especially the idea of using the algebra $\mathcal{H}$ to obtain many results in this paper. 

\section*{Declarations}
\subsection*{Data Availability Statement}
No datasets were generated or analyzed during the current study.

\subsection*{Conflict of interest}
The author has no relevant financial or non-financial interests to disclose.

\newpage
Ian Seong\\
Department of Mathematics\\
Williams College \\
18 Hoxsey St \\
Williamstown, MA 01267-2680 USA \\
email: is11@williams.edu\\
\end{document}